\begin{document}

\newtheorem{assumption}{Assumption}[section]
\newtheorem{definition}{Definition}[section]
\newtheorem{lemma}{Lemma}[section]
\newtheorem{proposition}{Proposition}[section]
\newtheorem{theorem}{Theorem}[section]
\newtheorem{corollary}{Corollary}[section]
\newtheorem{remark}{Remark}[section]

\small

\title{Translated Chemical Reaction Networks}
\author{Matthew D. Johnston \bigskip \\
\small Department of Mathematics\\
\small University of Wisconsin-Madison\\
\small 480 Lincoln Dr., Madison, WI 53706\\
\small email: mjohnston3@wisc.edu}
\date{}
\maketitle

\tableofcontents

\newpage

\begin{abstract}
\small
Many biochemical and industrial applications involve complicated networks of simultaneously occurring chemical reactions. Under the assumption of mass action kinetics, the dynamics of these chemical reaction networks are governed by systems of polynomial ordinary differential equations. The steady states of these mass action systems have been analysed via a variety of techniques, including elementary flux mode analysis, algebraic techniques (e.g. Groebner bases), and deficiency theory. In this paper, we present a novel method for characterizing the steady states of mass action systems. Our method explicitly links a network's capacity to permit a particular class of steady states, called toric steady states, to topological properties of a related network called a \emph{translated chemical reaction network}. These networks share their reaction stoichiometries with their source network but are permitted to have different complex stoichiometries and different network topologies. We apply the results to examples drawn from the biochemical literature.
\end{abstract}

\noindent \textbf{Keywords:} chemical kinetics; steady state; mass action system; complex balancing; weakly reversible \newline \textbf{AMS Subject Classifications:} 80A30, 90C35.

\bigskip

\section{Introduction}


Chemical reaction networks are given by sets of reactions which react to form sets of products at a pre-determined kinetic rate. Under the simplest of kinetic assumptions, that of mass-action kinetics, we may model the dynamics of a continuously-mixed chemical process as an autonomous system of polynomial ordinary differential equations called a \emph{mass action system}. Despite the simplistic formulation of such systems, the resulting dynamical systems may exhibit a wide range of dynamical behaviors, including multistationarity \cite{C-F1,C-F2}, Hopf bifurcations \cite{W-H1,W-H2}, periodicity and chaos \cite{E-T}.

Particular attention has been given recently to the nature of the steady states of these mass action systems, and in particular to the \emph{positive} steady states (that is to say, steady states in $\mathbb{R}_{>0}^m$). Such analysis is complicated by two main factors (1) the non-linear nature of the steady state equations, and (2) the partitioning of the positive state space into invariant affine spaces called compatibility classes. The task of characterizing the steady state set of a mass action system is further complicated by the observation that, for applied chemical processes, many parameter values (i.e. the rate constants associated with each reaction) are typically unknown or only known to a certain precision; consequently, an emphasis has been placed on results which characterize the steady state set \emph{regardless} of the rate constant values.

Nevertheless, many general results about the steady states of mass action systems are well-known. It has been known since the 1970s that two foundation classes of mass action systems---\emph{detailed balanced systems} \cite{V-H} and \emph{complex balanced systems} \cite{H-J1}---possess a unique positive steady state within each positive compatibility class. These results were further related to the topological structure of the network's underlying reaction graph (reversibility and weak reversibility, respectively) in \cite{F1,H}. This network structure approach to characterizing steady states has been continued by Martin Feinberg in a series of papers focusing on network deficiency \cite{Fe2,Fe4,Fe3}, network injectivity \cite{C-F1,C-F2}, and concordance \cite{Sh-F2}. This author, together with Jian Deng, Christopher Jones, and Adrian Nachman, was also instrumental in producing a paper affirming the long-standing conjecture that every weakly reversible network contains a positive steady state \cite{D-F-J-N}.

Beginning with a series of papers published by Karin Gatermann in the early 2000s, interest arose for characterizing the steady state sets of mass action systems by using tools from algebraic geometry \cite{G,G-H,G-W}. Other prominent algebraists, including Alicia Dickenstein and Bernd Sturmfels, have since become involved in adapting chemical reaction network results and terminology to the algebraic setting. These authors, along with Gheorghe Craciun and Anne Shiu, were instrumental in making the connection between toric varieties, Birch's theorem from algebraic statistics, and complex balanced steady states in \cite{C-D-S-S}. This paved the way for the introduction of \emph{toric steady states}, a generalization of complex balanced steady states which no longer shared any direct correspondence on the topological structure of the reaction graph \cite{M-D-S-C}. Other related contributions to the study of the steady states of mass action systems have been made in \cite{Fe,Co,D-M,C1,M-H-K,C-F-R}.

Research on the steady states of chemical reaction systems has also be conducted for systems which do not possess traditional mass action kinetics. One recent example is that of \emph{generalized mass action systems} introduced by Stefen M\"{u}ller and Georg Regensburger \cite{M-R}. Generalized mass action systems maintain the topological structure of standard chemical reaction networks but allow the stoichiometry of the monomials appearing in the steady state conditions to differ from those inferred by the graphical structure. The authors show that a notion of complex balancing is maintained in this generalized setting and that steady state properties can often still be inferred from topological structure of the generalized reaction graph.

In this paper, we introduce a method for relating the steady states of a mass action system to those of a specially-constructed generalized mass action system. This method, called \emph{network translation}, will allow an explicit connection to be made between systems with toric steady states and generalized mass action systems with complex balanced steady states. It will also allow steady state properties to be inferred from generalized network parameters. As such, this paper can be seen as a step toward closing the gap between the network topology approaches to characterizing steady states championed by Martin Feinberg, \emph{et al.}, and the approaches of algebraists such as Karin Gatermann and Alicia Dickenstein. We apply the results to several well-studied networks contained in the biochemical literature.

While the primary application of this paper is characterizing the steady states of mass action system, it will be noted that \emph{translated chemical reaction networks} are interesting objects of study in their own right. We will close with a discussion of some avenues for future research, both within the study of translated chemical reaction networks and generalized chemical reaction networks in general.

\section{Background}

In this section, we present the terminology and notation relevant for the study of chemical reaction networks and mass action systems, which will be used throughout this paper. We will present these concepts both in the standard and generalized setting.

\subsection{Chemical Reaction Networks}

A chemical reaction network is given by a collection of elementary reactions of the form
\begin{equation}
\label{75}
\mathcal{R}_i: \; \; \; \sum_{j=1}^m \alpha_{ij} \mathcal{A}_j \; \longrightarrow \; \sum_{j=1}^m \beta_{ij} \mathcal{A}_j, \; \; \; i = 1, \ldots, r
\end{equation}
where $\mathcal{S} = \left\{ \mathcal{A}_1, \ldots, \mathcal{A}_m \right\}$ is called the \emph{species set} and $\mathcal{R} = \left\{ \mathcal{R}_1, \ldots, \mathcal{R}_r \right\}$ is called the \emph{reaction set}. The coefficients $\alpha_{ij}, \beta_{ij} \in \mathbb{Z}_{\geq 0}^{r \times m}$ are called \emph{stoichiometric coefficients}. They control the number of individual molecules which are either consumed by, or produced as a result of, each individual reaction.

It is more common within chemical reaction network literature to index the reactions by the net terms on the left-hand or right-hand side of a reaction, which are called \emph{complexes}. In this setting, we remove redundancies so that, if a stoichiometrically equivalent complex appears multiply in the network (\ref{75}), it is only indexed once. We consequently define the \emph{complex set} to be $\mathcal{C} = \left\{  \mathcal{C}_1, \ldots, \mathcal{C}_n \right\}$ where each $\mathcal{C}_i = \sum_{j=1}^m y_{ij} \mathcal{A}_j$, $i=1,\ldots, n$, represents a stoichiometrically distinct complex and $y_i = (y_{i1}, \ldots, y_{im})$ is the \emph{stoichiometric vector} associated to the $i^{th}$ complex. The set of complexes which appear on the left (right) of at least one reaction are called \emph{reactant (product) complexes} and the \emph{reactant (product) complex set} is denoted $\mathcal{CR}$ ($\mathcal{CP}$). It is typically assumed that: (a) every species in $\mathcal{S}$ appears in at least one complex in $\mathcal{C}$; (b) every complex in $\mathcal{C}$ appears in at least one reaction in $\mathcal{R}$; and (c) there are no self-reactions (i.e. reactions $\mathcal{C}_i \rightarrow \mathcal{C}_i'$ where $\mathcal{C}_i = \mathcal{C}_i'$). A triplet $\mathcal{N} = (\mathcal{S},\mathcal{C},\mathcal{R})$ satisfying these conditions is called a \emph{chemical reaction network}.

In order to formalize the relationship between \emph{reaction-centered indexing} and \emph{complex-centered indexing}, we introduce the mappings $\rho: \mathcal{R} \mapsto \mathcal{CR}$ and $\rho': \mathcal{R} \mapsto \mathcal{CP}$. These mappings will be called the \emph{reactant profile} and \emph{product profile} of $\mathcal{N}$, respectively. They allow reactions to be represented in the condensed form $\mathcal{C}_{\rho(i)} \rightarrow \mathcal{C}_{\rho'(i)}$, $i=1, \ldots, r$. For example, consider the reaction network
\[\mathcal{N}: \; \; \; \mathcal{C}_1 \; \mathop{\stackrel{1}{\rightleftarrows}}_{2} \; \mathcal{C}_2 \; \stackrel{3}{\longrightarrow} \; \mathcal{C}_3 \; \stackrel{4}{\longleftarrow} \; \mathcal{C}_4.\]
The reactant complex set is $\mathcal{CR} = \left\{ \mathcal{C}_1,\mathcal{C}_2,\mathcal{C}_4 \right\}$ and the reaction profile is $(\rho(\mathcal{R}_1),$$\rho(\mathcal{R}_2),$$\rho(\mathcal{R}_3),$$\rho(\mathcal{R}_4)) = (\mathcal{C}_1,$$\mathcal{C}_2,$$\mathcal{C}_2,$$\mathcal{C}_4)$. Correspondingly, the product complex set is $\mathcal{CP} = \left\{ \mathcal{C}_1,\mathcal{C}_2,\mathcal{C}_3 \right\}$ and the product profile is $(\rho'(\mathcal{R}_1),$$\rho'(\mathcal{R}_2),$$\rho'(\mathcal{R}_3),$$\rho'(\mathcal{R}_4) = (\mathcal{C}_2,$$\mathcal{C}_1,$$\mathcal{C}_3,$$\mathcal{C}_3)$. Notice that we assign independent indexings to the reactions and the complexes. This differs from much of chemical reaction network theory literature but will be a required feature of the analysis considered in this paper.

\begin{remark}
Throughout this paper, we will be more interested in the reactant profile of a chemical reaction network $\mathcal{N}$ than the product profile. This is because the reactant complexes control the monomials which appear in the corresponding kinetic systems.
\end{remark}

\begin{remark}[\textbf{Note on indexing}]

To avoid introducing excessive notation, when convenient we will allow the sets $\mathcal{C}$ and $\mathcal{R}$ to interchangeably denote the complexes/reactions themselves \emph{or} the corresponding index sets. For instance, we will allow $\mathcal{C}$ to denote $\left\{ \mathcal{C}_1, \ldots, \mathcal{C}_n \right\}$ \emph{or} the index set $\left\{ 1, \ldots, n \right\}$, depending on the context. We will also allow, for instance, $\rho(\mathcal{R}_i) = \mathcal{C}_j$ and $\rho(i)=j$ to interchangeably represent the correspondence of the $i^{th}$ reaction in $\mathcal{R}$ to the $j^{th}$ complex in $\mathcal{C}$. We will favor indicial notation whenever ambiguity is not a concern.
\end{remark}




\subsection{Reaction Graph}

Interpreting chemical reaction networks as interactions between stoichiometric-distinct complexes naturally gives rise to their interpretation as directed graphs $G(V,E)$ where the vertices are the complexes (i.e. $V = \mathcal{C}$) and the edges are the reactions (i.e. $E = \mathcal{R}$). In the literature this graph has been termed the \emph{reaction graph} of a network \cite{H-J1}.

There are several properties of a network's reaction graph of which we will need to be aware. Most importantly, we will need to be able to quantify the \emph{connections} between complexes. We will say that a complex $\mathcal{C}_i$ is \emph{connected to} $\mathcal{C}_j$ if these exists a sequence of complexes $\left\{ \mathcal{C}_{\mu(1)}, \ldots, \mathcal{C}_{\mu(l)} \right\}$ such that $\mathcal{C}_i = \mathcal{C}_{\mu(1)}$, $\mathcal{C}_j = \mathcal{C}_{\mu(l)}$, and either $\mathcal{C}_{\mu(k)} \rightarrow \mathcal{C}_{\mu(k+1)}$ or $\mathcal{C}_{\mu(k+1)} \rightarrow \mathcal{C}_{\mu(k)}$ for all $k = 1, \ldots, l-1$. Correspondingly, we will say that $\mathcal{C}_i$ is \emph{path-connected} to $\mathcal{C}_j$ if there is a sequence of complexes $\left\{ \mathcal{C}_{\mu(1)}, \ldots, \mathcal{C}_{\mu(l)} \right\}$ where all of the reactions are in the forward direction. A \emph{linkage class} is a maximally connected set of complexes and a \emph{strong linkage class} is a maximally path-connected set of complexes. That is to say, two complexes are in the same linkage class if and only if they are connected, and two complexes are in the same strong linkage class if and only if they are path-connected in both directions. For instance, consider the chemical reaction network
\begin{equation}
\label{38}
\mathcal{N}: \; \; \; \; \; \; \; \; \mathcal{C}_1 \rightarrow \mathcal{C}_2 \;\rightleftarrows\; \mathcal{C}_3 \; \; \; \; \; \; \; \; \mathcal{C}_4 \;\rightleftarrows\; \mathcal{C}_5.
\end{equation}
We have the linkage classes $\mathcal{L}_1 = \left\{ 1, 2, 3 \right\}$ and $\mathcal{L}_2 = \left\{ 4, 5 \right\}$, and the strong linkage classes $\mathcal{L}^s_1 = \left\{ 1 \right\}$, $\mathcal{L}^s_2 = \left\{ 2, 3 \right\}$ and $\mathcal{L}^s_3 = \left\{ 4, 5 \right\}$. It is worth noticing that linkage classes completely partition the complex set $\mathcal{C}$ of a network. That is to say, we have $\mathcal{L}_1 \cap \mathcal{L}_2 = \emptyset$ and $\mathcal{L}_1 \cup \mathcal{L}_2 = \mathcal{C}$. The number of linkage classes in a network will be denoted by $\ell$.


These concepts allow us to further classify chemical reaction networks. A chemical reaction network is said to be \emph{reversible} if $\mathcal{C}_i \to \mathcal{C}_j \in \mathcal{R}$ implies $\mathcal{C}_j \to \mathcal{C}_i \in \mathcal{R}$ and a chemical reaction network is said to be \emph{weakly reversible} if the linkage classes and strong linkage classes coincide. For example, we can see that the network (\ref{38}) is neither reversible nor weakly reversible since $\mathcal{C}_1 \to \mathcal{C}_2$ is in the network but there is no reaction $\mathcal{C}_2 \rightarrow \mathcal{C}_1$ or path leading from $\mathcal{C}_2$ to $\mathcal{C}_1$. To demonstrate how a network may be weakly reversible without being reversible, consider
\begin{equation}
\label{39}
\mathcal{N}: \; \; \; \; \; \; \; \; \begin{array}{c} \mathcal{C}_1 \; \longrightarrow \; \mathcal{C}_2 \\ \nwarrow \; \; \; \swarrow \\ \mathcal{C}_3.\end{array}
\end{equation}
We have that $\mathcal{C}_1 \to \mathcal{C}_2$ is in (\ref{39}) but there is no reaction $\mathcal{C}_2 \rightarrow \mathcal{C}_1$ so that the network is not reversible; however, there is a path from $\mathcal{C}_2$ to $\mathcal{C}_1$ through the complex $\mathcal{C}_3$ so that the network is weakly reversible.


\begin{remark}
 We may easily extend our interpretation of a network's reaction graph to include a weighting $k_i$, $i =1, \ldots, r$, for each reaction. In this interpretation, we consider a \emph{weighted} directed graph $G(V,E)$ where each $\mathcal{R}_i \in E$ has weight $k_i$.
\end{remark}

\subsection{Mass Action Systems}

In order to model how the concentrations of the chemical species evolve over time, we assume that the reaction vessel is spatially homogeneous and that the reacting species are in sufficient quantity to be modeled as chemical concentrations. We will furthermore assume that the system obeys mass action kinetics, so that the rate of each reaction is proportional to the product of concentrations of the reactant species. That is to say, if the $i^{th}$ reaction has the form $\mathcal{A}_1 + \mathcal{A}_2 \rightarrow \cdots$ then we have rate $=k_i[\mathcal{A}_1][\mathcal{A}_2]$. The proportionality constant $k_i$ is commonly called the \emph{rate constant} of the reaction. The vector of rate constants will be denoted $k \in \mathbb{R}_{>0}^r$.

If we define $\mathbf{x} = (x_1, x_2, \ldots, x_m) \in \mathbb{R}_{>0}^m$ to be the vector of species concentrations, these assumptions give rise to the \emph{mass action system} $\mathcal{M} = (\mathcal{S},\mathcal{C},\mathcal{R},k)$ given by
\begin{equation}
\label{de}
\frac{d\mathbf{x}}{dt} = \sum_{i=1}^r k_i \left( y_{\rho'(i)} - y_{\rho(i)} \right) \mathbf{x}^{y_{\rho(i)}}
\end{equation}
where $y_{\rho(i)} = (y_{\rho(i)1}, y_{\rho(i)2}, \ldots, y_{\rho(i)m})$, $y_{\rho'(i)} = (y_{\rho'(i)1}, y_{\rho'(i)2}, \ldots, y_{\rho'(i)m})$, and $\mathbf{x}^{y_{\rho(i)}} = \prod_{j=1}^m x_j^{y_{\rho(i)j}}$. The vectors $y_{\rho'(i)}-y_{\rho(i)}$ are called \emph{reaction vectors}. They keep track of the net stoichiometric change in the individual species as the result of each reaction. They also give rise to the \emph{stoichiometric subspace} $S = \mbox{span}\left\{ y_{\rho'(i)} - y_{\rho(i)} \; | \; i = 1, \ldots, r \right\}\in \mathbb{R}^m$. The stoichiometric subspace partitions the state space $\mathbb{R}_{\geq 0}^m$ of (\ref{de}) into invariable affine spaces called \emph{stoichiometric compatibility classes}, $\mathsf{C}_{\mathbf{x}_0} = (S + \mathbf{x}_0) \cap \mathbb{R}_{>0}^m.$ Solutions of (\ref{de}) are restricted to $\mathsf{C}_{\mathbf{x}_0}$. That is to say, if $\mathbf{x}(t)$ by a solution of (\ref{de}) with $\mathbf{x}(0) = \mathbf{x}_0 \in \mathbb{R}_{>0}^m$, then $\mathbf{x}(t) \in \mathsf{C}_{\mathbf{x}_0}$ for all $t \geq 0$ \cite{H-J1,V-H}.

Most applications involving chemical reaction networks favor a matrix formulation of (\ref{de}) which explicitly separates the linear and nonlinear components of the equations. There are several such formulations in the literature which we will introduce in the relevant sections. We give here the most general decomposition of the network we will need. To that end, we introduce the following matrices:
\begin{itemize}
\item
The \emph{complex matrix} $Y \in \mathbb{Z}_{\geq 0}^{m \times n}$ is the matrix where the $j^{th}$ column is the $j^{th}$ stoichiometric vector $y_j$, i.e. $[Y]_{\cdot, j} = y_j$, $j=1, \ldots, n$.
\item
The matrix $I_a \in \mathbb{Z}_{\geq 0}^{n \times r}$ is the matrix with entries $[I_a]_{ji} = -1$ if $\rho(i)=j$, $[I_a]_{ji} = 1$ if $\rho'(i)=j$, and $[I_a]_{ji} = 0$ otherwise.
\item
The matrix $I_k \in \mathbb{R}_{\geq 0}^{r \times n}$ is the matrix with entries $[I_k]_{ij} = k_i$ if $\rho(i)=j$, and $[I_k]_{ij} = 0$ otherwise.
\end{itemize}
We will also need the \emph{mass action vector} $\Psi(\mathbf{x}) \in \mathbb{R}_{\geq 0}^n$, which is the vector with entries $\Psi_j(\mathbf{x}) = \mathbf{x}^{y_j}, j = 1, \ldots, n$. The following matrix form is equivalent to (\ref{de}):
\begin{equation}
\label{de2}
\frac{d\mathbf{x}}{dt} = Y \; I_a \; I_k \; \Psi(\mathbf{x}).
\end{equation}
The formulation (\ref{de2}) explicitly decouples the linearity associated with the network structure from the nonlinear mass action terms. It also explicitly relates the reaction-indexing scheme of the network to the complex-indexing scheme through the matrices $I_a$ and $I_k$. Consequently, this formulation provides a bridge between reaction-oriented kinetic approaches \cite{C1,A3,C-F-R} and complex-graph kinetic approaches \cite{H-J1,Fe2,Fe4}.





\subsection{Generalized Mass Action Systems}
\label{generalizedsection}

Many chemical reaction systems arising in practice do not obey the law of mass action. For this reason, and the desire to simplify models, it has become common in biochemical applications to model enzymatic reactions with alternative kinetics schemes, in particular with Michaelis-Menten kinetics \cite{M-M} or Hill kinetics \cite{H}.

Another alternative kinetic form is power-law formalism. In this formulation the kinetic terms are still monomials but they are permitted to take powers not necessarily corresponding to the stoichiometry of the reactant complex \cite{Sa}. This has recently been extended by Stefan M\"{u}ller and Georg Regensburger \cite{M-R} to a more network-focused approach called \emph{generalized chemical reaction networks}. 

\begin{definition}
A \textbf{generalized chemical reaction network} $(\mathcal{S},\mathcal{C},\tilde{\mathcal{C}},\mathcal{R})$ is a chemical reaction network $(\mathcal{S},\mathcal{C},\mathcal{R})$ together with a set of \textbf{kinetic complexes} $\tilde{\mathcal{C}}$ which are in a one-to-one correspondence with the elements of $\mathcal{C}$.
\end{definition}

\noindent The set $(\mathcal{S},\mathcal{C},\mathcal{R})$ determines the reaction structure and stoichiometry of the generalized chemical reaction network, just as it does for a standard chemical reaction network; however, each complex in $\mathcal{C}$ is associated to a kinetic complex in $\tilde{\mathcal{C}}$. These kinetic complexes $\tilde{\mathcal{C}}$ can be thought of as ``ghosting'' the chemical reaction network $(\mathcal{S}, \mathcal{C}, \mathcal{R})$ in that they do not appear directly in the reaction graph but are called upon when assigning a kinetics.

Since the kinetic complexes are in one-to-one correspondence with the stoichiometric complexes, we may consider properties of a second reaction graph with the kinetic complexes $\tilde{\mathcal{C}}$ in place of the regular complexes $\mathcal{C}$. This hypthetical \emph{kinetic reaction graph} does not determine the stoichiometry of the network but it does play an important role in determining where steady states of the corresponding kinetic model may lie. The \emph{kinetic-order subspace} $\tilde{S}$ as
\[\tilde{S} = \mbox{span} \left\{ \tilde{y}_{\rho'(i)} - \tilde{y}_{\rho(i)} \; | \; i = 1, \ldots, r \right\}\]
and the \emph{kinetic complex matrix} $\tilde{Y}$ as the matrix with the vectors $\tilde{y}_j$ corresponding to the kinetic complex $\tilde{\mathcal{C}}_j$ in the $j^{th}$ column. The stoichiometric and kinetic-order subspaces $S$ and $\tilde{S}$, respectively, are said to be \emph{sign-compatibility} if $\sigma(S) = \sigma(\tilde{S})$ where $\sigma(\cdot) \in \left\{ -, 0, + \right\}^m$ is the \emph{sign-vector}, i.e. the vector with entries $\sigma_i(\mathbf{v}) = $``$-$'' if $v_i <0$, $\sigma_i(\mathbf{v}) = 0$ if $v_i = $``$0$'', and $\sigma_i(\mathbf{v}) = $``$+$'' if $v_i >0$. (For details on properties of sign vectors, we defer to the concise introduction in \cite{M-R}.)

When space is not a concern, the ``ghosting'' of the complexes by kinetic complexes is denoted by dotted lines in the reaction graph. For example, we write
\begin{equation}
\label{network32}
\begin{split} \mathcal{A}_1 + \mathcal{A}_2 & \; \; \; \mathop{\stackrel{k_1}{\rightleftarrows}}_{k_2} \; \; \; \mathcal{A}_3 \\ \vdots \; \; \; \; \; \; & \; \; \; \; \; \; \; \; \; \; \; \; \; \vdots \\ 7 \mathcal{A}_1 + \mathcal{A}_3 & \; \; \; \; \; \; \; \; \; \; 5\mathcal{A}_2\end{split}
\end{equation}
to imply that the stoichiometric complex $\mathcal{C}_1 = \mathcal{A}_1 + \mathcal{A}_2$ is associated with the kinetic complex $\tilde{\mathcal{C}}_1 = 7 \mathcal{A}_1 + \mathcal{A}_3$ and that the stoichiometric complex $\mathcal{C}_2 = \mathcal{A}_3$ is associated with the kinetic complex $\tilde{\mathcal{C}}_2 = 5\mathcal{A}_2$. We can see immediately that $S = \mbox{ span} \left\{ (-1,-1,1) \right\}$ and $\tilde{S} = \mbox{ span} \left\{ (-7,5,-1) \right\}$ so that $\sigma(S) = \left\{ (-,-,+),\right.$ $(0,0,0),$ $\left.(+,+,-) \right\}$ and $\sigma(\tilde{S}) = \left\{ (-,+,-)\right.$ $,(0,0,0),$ $\left.(+,-,+) \right\}$ so that $S$ and $\tilde{S}$ are not sign-compatible.

The kinetic framework for generalized chemical reaction networks is the following.

\begin{definition}
The \textbf{generalized mass action system} $(\mathcal{S},\mathcal{C},\tilde{\mathcal{C}},\mathcal{R},k)$ corresponding to the generalized chemical reaction network $(\mathcal{S},\mathcal{C},\tilde{\mathcal{C}},\mathcal{R})$ is given by
\begin{equation}
\label{gde}
\frac{d\mathbf{x}}{dt} = Y \; I_a \; I_k \; \tilde{\Psi}(\mathbf{x})
\end{equation}
where $\tilde{\Psi}(\mathbf{x})$ has entries $\tilde{\Psi}_j(\mathbf{x}) = \mathbf{x}^{\tilde{y}_j}$, $j =1, \ldots, n$.
\end{definition}
\noindent In other words, a generalized mass action is the mass action system (\ref{de2}) with the monomials $\mathbf{x}^{y_j}$ replaced by the monomials $\mathbf{x}^{\tilde{y}_j}$. The generalized mass action system corresponding to network (\ref{network32}) is
\[\frac{dx_1}{dt} = \frac{dx_2}{dt} = -\frac{dx_3}{dt} = -k_1 x_1^7x_3 + k_2 x_2^5\]
where the stoichiometry of the network comes from the stoichiometric complexes $\mathcal{C}$ but the monomials come from the kinetic complexes $\tilde{\mathcal{C}}$.

\begin{remark}
It is worth noting that the support of the kinetic complexes $\tilde{\mathcal{C}}$ is not required to be the same as that of the original complex set $\mathcal{C}$ in the generalized chemical reaction network framework. This contrasts with some general kinetic frameworks for chemical reaction systems \cite{A3}.
\end{remark}

\section{Steady States of Mass Action Systems}



When considering the steady states of a mass action system, we are typically only interested in the \emph{positive steady state set} given by
\begin{equation}
\label{equilibrium}
E = \left\{ \mathbf{x} \in \mathbb{R}_{>0}^m \; | \; Y \; I_a \; I_k \; \Psi(\mathbf{x}) = 0 \right\}.
\end{equation}
Characterizing (\ref{equilibrium}) in general is a difficult algebraic task due to the nonlinear nature of the equations. It is somewhat surprising, therefore, that many characterizations exist within the literature which not only guarantee certain properties of the steady state set (\ref{equilibrium}), but guarantee these properties \emph{for all compatibility classes} and also \emph{for all rate constants}. In fact, there has been an emphasis on determining classes of mass action systems for which the steady state set is qualitatively identical regardless of which compatibility class or rate constants are chosen.

In this section, we will introduce three equilivalent reformulations of (\ref{equilibrium}) which have been used in the literature to characterize the steady states of mass action systems. This paper seeks to clarify the relationship between these three approaches.


\subsection{Stoichiometric and Cyclic Generators}
\label{generatorsection}

In order to derive a \emph{reaction-oriented} formulation of the steady state set (\ref{equilibrium}), we introduce the \emph{stoichiometric matrix} $\Gamma:= Y \; I_a \in \mathbb{Z}^{m \times r}$ and the \emph{reaction rate vector} $R(\mathbf{x}):= I_k \Psi(\mathbf{x}) \in \mathbb{R}_{\geq 0}^r$. The stoichiometric matrix is the matrix where the $i^{th}$ row is given by the $i^{th}$ reaction vector $y_{\rho'(i)}-y_{\rho(i)}$, while the reaction rate vector is the vector of kinetic forms for each reaction. These quantities allow us to rewrite (\ref{equilibrium}) as
\begin{equation}
 \label{equilibrium2}
E = \left\{ \mathbf{x} \in \mathbb{R}_{>0}^m \; | \; \Gamma \; R(\mathbf{x}) = \mathbf{0} \right\}.
\end{equation}
An immediate consequence of (\ref{equilibrium2}) is that, regardless of the chosen kinetics, a point $\mathbf{x} \in \mathbb{R}_{>0}^m$ is a steady state if and only if $R(\mathbf{x}) \in \mbox{ ker}(\Gamma)$. Consequently, a chemical reaction system may permit positive steady states only if ker$(\Gamma) \cap \mathbb{R}_{> 0}^r \not= \emptyset$. Characterization of the \emph{current cone} ker$(\Gamma) \cap \mathbb{R}_{ \geq 0}^r$ has formed the basis, explicitly and implicitly, of many classical results on the steady states of mass action systems \cite{C1,F1,H,G-W,C-F-R}.


Since the current cone is a finite-dimensional polyhedral cone, it can be finitely generated by a set of extreme vectors (Minkowski-Weyl theorem). Consequently, we have
\[\mbox{ker}(\Gamma) \cap \mathbb{R}_{\geq 0}^r = \sum_{i=1}^f \lambda_i E_i, \; \lambda_i \geq 0, \; \; \; \mbox{ where } \; \; \; E_i \in \mbox{ker}(\Gamma) \cap \mathbb{R}_{\geq 0}^m\]
where the $\left\{ E_1, E_2, \ldots, E_f \right\}$, are called the \emph{extreme currents} of the network. These currents represent modes of positive stoichiometric flux balance in the reaction network.

The extreme currents can be further subdivided by noticing that
\begin{equation}
\label{3294}
\mbox{dim(ker}(\Gamma)) = \mbox{dim(ker} (I_a)) + \mbox{dim}(\mbox{ker}(Y) \cap \mbox{Im}(I_a)).
\end{equation}
It follows that we may assign the generators $\left\{ E_1, \ldots, E_f \right\}$ of the current cone to one of two groups.
\begin{definition}
An extreme current $E_i$ of ker$(\Gamma) \cap \mathbb{R}_{\geq 0}^r$ is called:
\begin{enumerate}
\item a \textbf{cyclic generator} if $E_i \in$ ker$(I_a)$; or
\item a \textbf{stoichiometric generator} if $I_a E_i \in$ ker$(Y) \setminus \left\{ \mathbf{0} \right\}$.
\end{enumerate}
\end{definition}
\begin{remark}
Note that Karin Gatermann called cyclic generators \emph{positive circuits} in \cite{G-W}. The terminology is alterred here to emphasize the connection between the two sets as generators of ker$(\Gamma) \cap \mathbb{R}_{\geq 0}^m$.
\end{remark}

Although seldom explicitly stated, the distinction between stoichiometric and cyclic generators forms the basis of \emph{deficiency theory} introduced in \cite{F1,H} and studied extensively since \cite{F2,Fe2,Fe3,Fe4}.
\begin{definition}
\label{deficiency}
The \textbf{deficiency} of a chemical reaction network $\mathcal{N}$ is defined to be
\[\delta = \mbox{\emph{dim}}(\mbox{\emph{ker}}(Y) \cap \mbox{\emph{Im}}(I_a)).\]
\end{definition}

\noindent The deficiency is a nonnegative parameter which can be determined from the structure of the chemical reaction network itself. That is to say, it does not depend on the kinetic formulation, or even on the assumption of mass action kinetics. The deficiency of a generalized chemical reaction network $\tilde{\mathcal{N}} = ( \mathcal{S},\mathcal{C},\tilde{\mathcal{C}},\mathcal{R} )$ is defined in the same way as a regular chemical reaction network $\mathcal{N} = (\mathcal{S},\mathcal{C},\mathcal{R})$ by Definition \ref{deficiency}.

\begin{remark}
This definition of the deficiency differs from the classical definition, which is $\delta = n - \ell -s$ where $n$ is the number of stoichiometrically distinct complexes, $\ell$ is the number of linkage classes, and $s$ is the dimension of the stoichiometric subspace \cite{F1,H}. The definition given here is equivalent (see Appendix \ref{AppendixB}) and will be more intuitive for the operations introduced in Section \ref{techniquesection}.
\end{remark}

\begin{remark}
\label{remark}
 Notice by (\ref{3294}) that the condition $\delta = 0$ is equivalent to the network possessing no stoichiometric generators.

\end{remark}

\subsection{Complex Balanced Steady States}



In order to derive a \emph{complex-oriented} formulation of the steady state set (\ref{equilibrium}), we introduce the \emph{kinetic} (or \emph{Kirchhoff}) matrix defined by $A_k := I_a \; I_k \in \mathbb{R}^{n \times n}$. The kinetic matrix is closely related to the structure of the reaction graph of a network since $[A_k]_{ji} > 0$ for $i \not= j$ if and only if $\mathcal{C}_i \rightarrow \mathcal{C}_j$ is a reaction in the network. The set (\ref{equilibrium}) can be written in the equivalent form
\begin{equation}
 \label{equilibrium3}
E = \left\{ \mathbf{x} \in \mathbb{R}_{>0}^m \; | \; Y \; A_k \; \Psi(\mathbf{x}) = \mathbf{0} \right\}.
\end{equation}

An important class of steady states of mass action systems, derived from the form (\ref{equilibrium3}), are the \emph{complex balanced steady states}. This class was introduced by Fritz Horn and Roy Jackson in \cite{H-J1} as a generalization of \emph{detailed balanced steady states}.
\begin{definition}
\label{cb}
A positive steady state $\mathbf{x} \in \mathbb{R}_{>0}^m$ of a mass action system $\mathcal{M} = ( \mathcal{S},\mathcal{C},\mathcal{R},k)$ is called a \textbf{complex balanced steady state} if
\[\Psi(\mathbf{x}) \in \mbox{ \emph{ker}}(A_k).\]
Furthermore, a mass action system will be called a \textbf{complex balanced system} if every steady state is a complex balanced steady state.
\end{definition}
\noindent It is known that if a mass action system has a complex balanced steady state, then all steady states of complex balanced (Lemma 5B, \cite{H-J1}). Consequently, all mass action systems with complex balanced steady states are complex balanced systems. It is also known that every positive stoichiometric compatibility class $\mathsf{C}_{\mathbf{x}_0}$ of a complex balanced system contains precisely one steady state (Lemma 5A, \cite{H-J1}), and the complex balanced steady state set is given by
\[E = \left\{ \mathbf{x} \in \mathbb{R}_{>0}^m \; | \; \ln(\mathbf{x}) - \ln(\mathbf{a}) \in S^\perp \right\}\]
where $\mathbf{a} \in \mathbb{R}_{> 0}^m$ is an arbitrary complex balanced steady state of the system.


Further investigation of the complex balancing condition was conducted by Fritz Horn and Martin Feinberg in the papers \cite{F1,H}. Their main result, popularly called the \emph{Deficiency Zero Theorem}, relates the capacity of a network to permit complex balanced steady states to properties of the reaction graph.

\begin{theorem}[Theorem 4A, \cite{H}]
\label{deficiencyzerotheorem}
A mass action system $\mathcal{M} = (\mathcal{S},\mathcal{C},\mathcal{R},k)$ is complex balanced for all sets of rate constants if and only if the underlying reaction network is weakly reversible and the deficiency of the network is zero (i.e. $\delta = 0$).
\end{theorem}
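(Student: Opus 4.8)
The plan is to prove both implications by exploiting the factorization $A_k = I_a I_k$ together with the decomposition $\dim(\ker(\Gamma)) = \dim(\ker(I_a)) + \delta$ recorded in (\ref{3294}), where $\delta = \dim(\ker(Y) \cap \mathrm{Im}(I_a))$. Throughout I will use that $\Psi(\mathbf{x}) \in \mathbb{R}_{>0}^n$ whenever $\mathbf{x} \in \mathbb{R}_{>0}^m$, and that $A_k$ is (the negative of) the weighted Kirchhoff matrix of the reaction graph, so that its kernel is controlled by the terminal strong linkage classes.

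For the direction ($\Leftarrow$), assume the network is weakly reversible with $\delta = 0$ and fix an arbitrary $k$. The easy half is that \emph{every} positive steady state is then complex balanced: at a steady state $Y A_k \Psi(\mathbf{x}) = \mathbf{0}$ gives $A_k \Psi(\mathbf{x}) \in \ker(Y)$, while $A_k \Psi(\mathbf{x}) = I_a (I_k \Psi(\mathbf{x})) \in \mathrm{Im}(I_a)$ always holds, so $A_k \Psi(\mathbf{x}) \in \ker(Y) \cap \mathrm{Im}(I_a) = \{\mathbf{0}\}$ and hence $A_k\Psi(\mathbf{x}) = \mathbf{0}$. The substantive half is \emph{existence}: I must produce at least one $\mathbf{x}$ with $\Psi(\mathbf{x}) \in \ker(A_k)$. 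Weak reversibility supplies, via the Matrix-Tree theorem, a strictly positive $\psi^\ast \in \ker(A_k)$ (each linkage class is a single terminal strong linkage class and contributes a positive tree-constant vector). It then remains to show that the exponential manifold $\{ \Psi(\mathbf{x}) : \mathbf{x} \in \mathbb{R}_{>0}^m \} = \{ c \in \mathbb{R}_{>0}^n : \ln c \in \mathrm{Im}(Y^\top) \}$ meets $\ker(A_k) \cap \mathbb{R}_{>0}^n$. This is where $\delta = 0$ enters as the exact codimension condition making the two sets complementary, so that a Birch-type convexity argument (in the spirit of the toric/algebraic-statistics viewpoint of \cite{C-D-S-S}) yields an intersection point, unique within each compatibility class. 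Having one complex balanced steady state, Lemma 5B of \cite{H-J1} upgrades this to: every steady state is complex balanced, for every $k$.

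For the direction ($\Rightarrow$), assume the system is complex balanced for all $k$. \emph{Necessity of weak reversibility}: a complex balanced steady state $\mathbf{x}$ gives a strictly positive $\Psi(\mathbf{x}) \in \ker(A_k)$, but the kernel of the Kirchhoff matrix contains a strictly positive vector only when every complex lies in a terminal strong linkage class, which is exactly weak reversibility. \emph{Necessity of $\delta = 0$}: suppose instead $\delta > 0$. By Remark \ref{remark} the network then possesses a stoichiometric generator $E^s \geq \mathbf{0}$ with $I_a E^s \in \ker(Y) \setminus \{\mathbf{0}\}$. Using the weak reversibility just established I choose a strictly positive cyclic current $E^c \in \ker(I_a) \cap \mathbb{R}_{>0}^r$ by summing directed cycles covering every reaction, and set $E = E^s + \lambda E^c$ with $\lambda$ large enough that $E \in \mathbb{R}_{>0}^r$; since $E^c \in \ker(I_a)$ we still have $I_a E = I_a E^s \neq \mathbf{0}$ while $Y I_a E = \mathbf{0}$. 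Taking $\mathbf{x} = \mathbf{1}$ and $k_i = E_i > 0$ gives $R(\mathbf{1}) = I_k \Psi(\mathbf{1}) = E$, so $\Gamma R(\mathbf{1}) = Y I_a E = \mathbf{0}$ makes $\mathbf{1}$ a steady state, yet $A_k \Psi(\mathbf{1}) = I_a E \neq \mathbf{0}$ shows it is \emph{not} complex balanced; by Lemma 5B no complex balanced steady state can then exist for this $k$, contradicting the hypothesis. Hence $\delta = 0$.

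The main obstacle is the existence step in ($\Leftarrow$): the linear-algebraic and graph-theoretic pieces (complex balancing of all steady states, the kernel characterization of $A_k$, and the generator construction) are routine, but guaranteeing that the nonlinear manifold $\{\Psi(\mathbf{x})\}$ actually intersects $\ker(A_k) \cap \mathbb{R}_{>0}^n$ requires the genuine input of Birch's theorem or a strict-convexity argument, with $\delta = 0$ furnishing precisely the dimension count that makes the intersection nonempty and unique per compatibility class.
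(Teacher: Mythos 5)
The paper does not actually prove this statement; it is quoted directly from Horn's Theorem 4A in \cite{H}, so there is no internal proof to compare against. Your outline follows the classical route, and most of its individual steps are sound: the observation that $\delta=0$ forces $A_k\Psi(\mathbf{x})\in\ker(Y)\cap\mathrm{Im}(I_a)=\{\mathbf{0}\}$ at any steady state, the derivation of weak reversibility from a strictly positive vector in $\ker(A_k)$, and the construction $E=E^s+\lambda E^c$ with $k=E$ exhibiting $\mathbf{x}=\mathbf{1}$ as a non--complex-balanced steady state when $\delta>0$ are all correct (the last step does need the remark that, for a weakly reversible network, the current cone contains an interior point of $\ker(\Gamma)$, so $\delta>0$ really does force a stoichiometric generator to exist).

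The genuine gap is the existence half of the ($\Leftarrow$) direction, which is the actual content of Horn's theorem and which you leave as an appeal to ``a Birch-type convexity argument.'' What must be shown is that $\left\{ \mathbf{c}\in\mathbb{R}^n_{>0} : \ln\mathbf{c}\in\mathrm{Im}(Y^T) \right\}$ meets $\ker(A_k)\cap\mathbb{R}^n_{>0}$, and the mechanism is not convexity but linear algebra: because the tree-constant vectors $\mathbf{K}_1,\ldots,\mathbf{K}_{\ell}$ of Theorem \ref{Akernel} have disjoint supports $\Lambda_1,\ldots,\Lambda_{\ell}$, the logarithm of $\ker(A_k)\cap\mathbb{R}^n_{>0}$ is the affine subspace $\ln\psi^*+\mathrm{span}\{\omega_1,\ldots,\omega_{\ell}\}$, where $\psi^*=\sum_j\mathbf{K}_j$ and $\omega_j$ is the indicator vector of $\Lambda_j$; since $\mathrm{Im}(I_a)=\mathrm{span}\{\omega_1,\ldots,\omega_{\ell}\}^{\perp}$, one gets $\left(\mathrm{Im}(Y^T)+\mathrm{span}\{\omega_1,\ldots,\omega_{\ell}\}\right)^{\perp}=\ker(Y)\cap\mathrm{Im}(I_a)$, whose dimension is exactly $\delta$, so $\delta=0$ forces the two affine sets to intersect and yields the required equilibrium. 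Without this (or an equivalent) computation you are asserting rather than proving the key implication; Birch's theorem is only needed for the finer claim of uniqueness within each compatibility class, which the statement does not require. A secondary issue: under the paper's Definition \ref{cb} a ``complex balanced system'' is one in which \emph{every} steady state is complex balanced, a condition that can hold vacuously for a network with no positive steady states, so your first step in the ($\Rightarrow$) direction (``a complex balanced steady state $\mathbf{x}$ gives\ldots'') implicitly adopts Horn's original reading in which complex balancing includes the existence of such an $\mathbf{x}$; this assumption should be made explicit.
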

\noindent This result gives computable properties, depending on the network topology alone, which are sufficient to guarantee strong restrictions on the nature, location, and number of steady states of the corresponding mass action systems. Remarkably, these results are guaranteed to hold for all possible choices of positive rate constants and all stoichiometric compatibility classes.


A surprising result of \cite{M-R} is that complex balancing may also be meaningfully defined for \emph{generalized} mass action systems.
\begin{definition}
\label{generalizedcb}
A positive steady state $\mathbf{x} \in \mathbb{R}_{>0}^m$ of a generalized mass action system $\tilde{\mathcal{M}} = ( \mathcal{S},\mathcal{C},\tilde{\mathcal{C}},\mathcal{R},k)$ is called a \textbf{generalized complex balanced steady state} if
\[\tilde{\Psi}(\mathbf{x}) \in \mbox{ \emph{ker}}(A_k).\]
\end{definition}
\noindent The authors show several important consequences of complex balancing. In particular, they show that a generalized chemical reaction network which permits generalized complex balanced steady states is weakly reversible (Proposition 2.18 \cite{M-R}) and that the steady state set is given by
\begin{equation}
\label{gcbequil}
E = \left\{ \mathbf{x} \in \mathbb{R}_{>0}^m \; | \; \ln(\mathbf{x}) - \ln(\mathbf{a}) \in \tilde{S}^\perp \right\}
\end{equation}
\noindent where $\mathbf{a} \in \mathbb{R}_{>0}^m$ is an arbitrary generalized complex balanced steady state of the system and $\tilde{S}$ is the kinetic-order subspace defined in Section \ref{generalizedsection}.

Furthermore, the authors define the kinetic deficiency by $\tilde{\delta} = \mbox{dim}(\mbox{ker}(\tilde{Y}) \cap \mbox{Im}(I_a))$. They show the following result.
\begin{theorem}[Proposition 2.20, \cite{M-R}]
\label{deficiencyzerotheorem}
A generalized mass action system $\tilde{\mathcal{M}} = (\mathcal{S},\mathcal{C},\tilde{\mathcal{C}},\mathcal{R},k)$ has at least one generalized complex balanced steady state for all sets of rate constants if the underlying reaction network is weakly reversible and the kinetic deficiency of the network is zero (i.e. $\tilde{\delta} = 0$).
\end{theorem}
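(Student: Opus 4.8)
The plan is to show directly that the equation $A_k \tilde{\Psi}(\mathbf{x}) = \mathbf{0}$ has a positive solution, since by Definition \ref{generalizedcb} such an $\mathbf{x}$ is exactly a generalized complex balanced steady state. I would decompose the argument into a graph-theoretic input supplied by weak reversibility, a reformulation of the problem in logarithmic coordinates, and a single rank computation in which the hypothesis $\tilde{\delta}=0$ does all the work.

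First I would extract from weak reversibility a strictly positive vector $b \in \ker(A_k)$. Because $A_k = I_a I_k$ is the weighted Kirchhoff matrix of the reaction graph and each linkage class of a weakly reversible network is strongly connected, the Matrix–Tree theorem (equivalently, Perron–Frobenius applied to each strongly connected linkage class) produces, for each linkage class $\lambda = 1, \ldots, \ell$, a vector $b^\lambda \in \ker(A_k)$ that is strictly positive on the complexes of $\lambda$ and zero elsewhere. These span $\ker(A_k)$, so that $\ker(A_k)\cap\mathbb{R}_{>0}^n = \{\sum_\lambda c_\lambda b^\lambda : c_\lambda > 0\}$; set $b = \sum_\lambda b^\lambda > 0$. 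Next I would pass to logarithms: writing $\mathbf{u} = \ln \mathbf{x}$ gives $\ln \tilde{\Psi}_j(\mathbf{x}) = (\tilde{Y}^T \mathbf{u})_j$, so the demand $\tilde{\Psi}(\mathbf{x}) = \sum_\lambda c_\lambda b^\lambda$ becomes, for each complex $j$ in its linkage class $\lambda(j)$, the linear equation $(\tilde{Y}^T \mathbf{u})_j - w_{\lambda(j)} = \ln b_j$, where $w_\lambda = \ln c_\lambda$ is an unconstrained real unknown. Collecting these into $M = [\,\tilde{Y}^T \mid -L\,] \in \mathbb{R}^{n \times (m+\ell)}$, with $L$ the linkage-class indicator matrix ($L_{j\lambda}=1$ iff complex $j$ lies in class $\lambda$), the existence of a steady state reduces to solvability of $M\binom{\mathbf{u}}{w} = \ln b$. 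Any real solution exponentiates to $\mathbf{x} = \exp(\mathbf{u}) \in \mathbb{R}_{>0}^m$ and $c_\lambda = \exp(w_\lambda) > 0$, and since a positive $b$ exists for every $k$, the conclusion would then hold for all rate constants.

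The crux is to show that $M$ is surjective, and this is precisely where $\tilde{\delta}=0$ enters. I would compute $(\mathrm{Im}\,M)^\perp = \ker M^T = \{p : \tilde{Y}p = \mathbf{0} \text{ and } L^T p = \mathbf{0}\}$. The condition $L^T p = \mathbf{0}$ says $p$ sums to zero on each linkage class; since the incidence matrix $I_a$ has rank $n-\ell$ with image exactly this zero-sum space, we have $\ker L^T = \mathrm{Im}(I_a)$. Hence $\ker M^T = \ker(\tilde{Y}) \cap \mathrm{Im}(I_a)$, whose dimension is exactly $\tilde{\delta}$ by definition. Thus $\tilde{\delta}=0$ forces $\ker M^T = \{\mathbf{0}\}$, so $M$ is surjective and the system $M\binom{\mathbf{u}}{w} = \ln b$ is solvable for every right-hand side, in particular for $\ln b$.

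The hard part will be the first step: justifying that weak reversibility yields a strictly positive kernel vector together with the linkage-class-supported basis of $\ker(A_k)$; I would lean on the Matrix–Tree theorem or a Perron–Frobenius argument on each strongly connected linkage class. The only other point requiring care is the identification $\ker L^T = \mathrm{Im}(I_a)$, which follows from the standard fact that a graph incidence matrix with $\ell$ connected components has rank $n-\ell$ and image equal to the per-component zero-sum vectors. Everything else is linear bookkeeping, and I expect no further obstruction.
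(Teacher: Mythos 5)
Your argument is correct. The paper does not actually prove this statement---it is imported verbatim as Proposition 2.20 of \cite{M-R}---so there is no internal proof to compare against; but your proof is essentially the classical Horn existence argument for complex balanced equilibria, transplanted to the kinetic complex matrix $\tilde{Y}$, which is also how M\"{u}ller and Regensburger argue. Each step checks out: the linkage-class-supported positive basis of $\ker(A_k)$ is exactly the paper's Theorem \ref{Akernel} (Appendix \ref{AppendixC}), valid here because $A_k = I_a I_k$ is unchanged in the generalized setting; the identification $\ker(L^T) = \mathrm{Im}(I_a)$ follows from $\mathrm{Im}(I_a) \subseteq \ker(L^T)$ together with the rank count $\dim \mathrm{Im}(I_a) = n - \ell = \dim \ker(L^T)$, the same count the paper uses in Appendix \ref{AppendixB}; and $\ker(M^T) = \ker(\tilde{Y}) \cap \mathrm{Im}(I_a)$ has dimension $\tilde{\delta} = 0$ by definition, giving surjectivity of $M$ and hence solvability of $M\binom{\mathbf{u}}{w} = \ln b$ for every rate constant vector. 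The only point worth making explicit in a written version is that $A_k \tilde{\Psi}(\mathbf{x}) = \mathbf{0}$ already forces $\mathbf{x}$ to be a steady state of (\ref{gde}), so the kernel condition alone delivers a generalized complex balanced steady state in the sense of Definition \ref{generalizedcb}.
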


\begin{remark}
It is important to note, however, that not all of the properties of standard complex balanced steady states apply in the generalized setting. In particular the generalized complex balanced steady state set $(\ref{gcbequil})$ may intersect a positive stoichiometric compatibility classes $\mathsf{C}_{\mathbf{x}_0}$ at a unique point, multiple times, or not at all.
\end{remark}

\subsection{Toric Steady States}


Karin Gatermann was instrumental in laying the groundwork for a transition in the study of steady states of mass action from a network topology setting to an algebraic one in the early 2000s \cite{G,G-H,G-W}. A number of prominent authors have since adopted this algebraic point of view; it may now be considered one of the principal approaches to determining steady state properties of mass action systems \cite{C-D-S-S,M-D-S-C,Shiu,C-F-R}. (We omit here background on the algebraic objects of interest, such as varieties, ideals, and Groebner bases. The interested reader is directed to the accessible textbook of Cox, Little, and O'Shea \cite{C-L-O}.)

In the algebraic geometry setting, the mass action steady state set (\ref{equilibrium}) is the \emph{variety} $V(I)$ associated with the \emph{mass action steady state ideal}
\[I =\langle Y \; I_a \; I_k \; \Psi(\mathbf{x}) \rangle.\]
That is to say, the mass action steady state ideal is the set of polynomials generated by the right-hand sides of (\ref{de2}). The question of characterizing the steady state set (\ref{equilibrium}) is equivalent to the algebraic question of characterizing the variety of strictly positive points, which is denoted $V_{>0}(I)$.

It was shown in \cite{C-D-S-S} that the steady state ideal for any complex balanced system is a \emph{toric} ideal. That is to say, it is a prime ideal which is generated by binomials. This justified the authors' choice to refashion complex balanced systems as \emph{toric dynamical systems}. The authors furthermore showed the following inclusion. (For a detailed introduction to the tree constants $K_i$ and $K_j$ (\ref{treeconstant}), and the relationship between Theorem \ref{theorem1} and Definition \ref{cb}, see Appendix \ref{AppendixC}.)

\begin{theorem}[Corollary 4, \cite{C-D-S-S}]
\label{theorem1}
The steady state ideal of a complex balanced system contains the binomials $K_i \mathbf{x}^{y_j} - K_j \mathbf{x}^{y_i}$ where $\mathcal{C}_i,\mathcal{C}_j \in \mathcal{L}_k$ and $K_i$ and $K_j$ are the tree constants of the $i^{th}$ and $j^{th}$ complex, respectively.
\end{theorem}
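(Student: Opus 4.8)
The plan is to read the binomial relations directly off the complex balancing condition $\Psi(\mathbf{x}) \in \mbox{ker}(A_k)$, after first obtaining an explicit description of $\mbox{ker}(A_k)$ in terms of the tree constants $K_i$. First I would exploit the network structure. A mass action system possessing a positive complex balanced steady state is necessarily weakly reversible \cite{H-J1,F1}, so each linkage class $\mathcal{L}_k$ is a strong linkage class (i.e. strongly connected). Since the entry description of $A_k = I_a I_k$ places a nonzero off-diagonal entry $[A_k]_{ji}$ only when $\mathcal{C}_i \to \mathcal{C}_j$ is a reaction, and no reaction joins distinct linkage classes, the matrix $A_k$ is block diagonal with one block per linkage class, each block being the weighted directed Laplacian of a strongly connected digraph on the vertices of $\mathcal{L}_k$.

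The combinatorial heart of the argument, and the step I expect to be the main obstacle, is showing that the kernel of each such block is spanned by the vector of tree constants. For this I would invoke the weighted Matrix-Tree theorem: a strongly connected block has rank $|\mathcal{L}_k| - 1$, and its one-dimensional kernel is spanned by the strictly positive vector whose $i^{th}$ entry is $K_i = \sum_T \prod_{(a \to b) \in T} k_{ab}$, the sum running over all spanning trees $T$ of $\mathcal{L}_k$ directed toward $\mathcal{C}_i$. Assembling the blocks gives $\mbox{ker}(A_k) = \mbox{span}\{ K^{(1)}, \ldots, K^{(\ell)} \}$, where $K^{(k)}$ carries the entries $K_i$ on the complexes of $\mathcal{L}_k$ and zeros elsewhere. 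Establishing that the tree constants \emph{span} the kernel (not merely lie in it), together with their positivity, is where the real content sits; everything downstream is formal.

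With the kernel in hand the conclusion is immediate. Because every steady state of a complex balanced system is itself complex balanced (Lemma 5B, \cite{H-J1}), any $\mathbf{x} \in V_{>0}(I)$ satisfies $\Psi(\mathbf{x}) = \sum_k \lambda_k K^{(k)}$ for scalars $\lambda_k = \lambda_k(\mathbf{x})$. Reading off the entries indexed by complexes $\mathcal{C}_i, \mathcal{C}_j$ of a common linkage class $\mathcal{L}_k$ gives $\mathbf{x}^{y_i} = \lambda_k K_i$ and $\mathbf{x}^{y_j} = \lambda_k K_j$, and eliminating $\lambda_k$ produces $K_i \mathbf{x}^{y_j} - K_j \mathbf{x}^{y_i} = 0$, so each such binomial vanishes on the steady state set. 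For the algebraic form of the containment I would note that the covector $v = K_i \mathbf{e}_j - K_j \mathbf{e}_i$ is orthogonal to every $K^{(k)}$, hence lies in $\mbox{ker}(A_k)^\perp = \mbox{Im}(A_k^T)$; any $w$ solving $A_k^T w = v$ then exhibits $K_i \mathbf{x}^{y_j} - K_j \mathbf{x}^{y_i} = w^T A_k \Psi(\mathbf{x})$ as a linear combination of the complex balancing polynomials, placing it in $\langle A_k \Psi \rangle$. Since Lemma 5B forces $\langle A_k \Psi \rangle$ and $I = \langle Y A_k \Psi \rangle$ to cut out the same positive variety, the binomials belong to the vanishing ideal of the steady state set, which is the sense in which the steady state ideal contains them.
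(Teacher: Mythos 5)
Your proposal is essentially the argument the paper relies on: the paper itself does not prove this statement but imports it from Corollary 4 of \cite{C-D-S-S}, and the supporting machinery it supplies (Theorem \ref{Akernel} in Appendix \ref{AppendixC}) is exactly your route --- weak reversibility of complex balanced systems, the block-Laplacian structure of $A_k$ over the linkage classes, and the weighted Matrix-Tree theorem identifying $\mbox{ker}(A_k)$ with the span of the tree-constant vectors $\mathbf{K}_1,\ldots,\mathbf{K}_\ell$; the paper likewise defers that kernel computation to \cite{Stanley,C-D-S-S,F3,G-H}. Your derivation of the vanishing of $K_i\mathbf{x}^{y_j}-K_j\mathbf{x}^{y_i}$ on the positive steady state locus, and the identity $K_i\mathbf{x}^{y_j}-K_j\mathbf{x}^{y_i}=w^TA_k\Psi(\mathbf{x})$ with $A_k^Tw=K_i\mathbf{e}_j-K_j\mathbf{e}_i$, are both correct.

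The one point to be careful about is the very last step. Your orthogonality argument places the binomial in the complex balancing ideal $\langle A_k\Psi(\mathbf{x})\rangle$, which \emph{contains} the steady state ideal $I=\langle Y A_k\Psi(\mathbf{x})\rangle$ rather than being contained in it, so membership in $I$ itself does not follow; and Lemma 5B of \cite{H-J1} only gives equality of the positive varieties, which yields membership in the vanishing ideal $\mathbb{I}(V_{>0}(I))$, again not in $I$. You flag this honestly, and for the way the theorem is used in this paper (through the positive variety and its parametrization) the distinction is harmless --- indeed the source \cite{C-D-S-S} states the result for the complex balancing ideal --- but as literally phrased here (``the steady state ideal contains the binomials'') that final containment is not established by your argument and would require either $\delta=0$ (so that $Y$ is injective on $\mbox{Im}(I_a)$ and the two ideals coincide) or a separate algebraic argument.
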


\noindent There are a number of desireable features which follow from a system having a toric ideal. It allows, for instance, an easy parametrization of the associated variety.

It was noted in \cite{M-D-S-C} that many mass action systems which do not admit complex balanced steady states nevertheless have steady state ideals which are generated by binomials. The authors say that such systems have \emph{toric steady states}. Although many properties of complex balanced systems do not generalize to systems with toric steady states, crucially, they do admit an easy parameterization of the positive variety $V_{>0}(I)$ (Theorem 3.11, \cite{M-D-S-C}).

In order to derive sufficient conditions for a system to have toric steady states, the authors of \cite{M-D-S-C} introduce the \emph{complex-to-species matrix} $\Sigma := Y \; I_a \; I_k \in \mathbb{R}^{m \times n}$ and rewrite (\ref{equilibrium}) as
\begin{equation}
 \label{equilibrium5}
E = \left\{ \mathbf{x} \in \mathbb{R}_{>0}^m \; | \; \Sigma \; \Psi(\mathbf{x}) = \mathbf{0} \right\}.
\end{equation}
From (\ref{equilibrium5}) it is easy to see that $\mathbf{x} \in \mathbb{R}_{>0}^m$ is a steady state of (\ref{equilibrium}) if and only if $\mathbf{x} \in \mbox{ker}(\Sigma)$. It is a surprising result of \cite{M-D-S-C} that, for many non-complex balanced systems, ker$(\Sigma)$ can in fact be decomposed in the same way as ker$(A_k)$ can be for complex balanced systems (see Appendix \ref{AppendixC}). The authors show the following inclusion.

\begin{theorem}[Theorem 3.3, \cite{M-D-S-C}]
\label{theorem012}
Consider a chemical reaction network $\mathcal{N} = (\mathcal{S},\mathcal{C},\mathcal{R})$. Suppose that ker$(\Sigma)$ has dimension $d$ and that there exists a partition $\Lambda_1, \Lambda_2, \ldots, \Lambda_d$ of $\left\{ 1, \ldots, n \right\}$ and a basis $\mathbf{b}_k$, $k=1, \ldots, d$, of ker$(\Sigma)$ with supp$(\mathbf{b}_k) = \Lambda_k$. Then the steady state ideal is generated by the binomials $[\mathbf{b}_k]_i \mathbf{x}^{y_j} - [\mathbf{b}_k]_j \mathbf{x}^{y_i}$ for $i,j \in \Lambda_k$, $k = 1, \ldots, d$.
\end{theorem}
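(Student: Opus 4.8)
The plan is to reduce this ideal equality to a single finite-dimensional linear-algebra identity. Write $I = \langle \Sigma\,\Psi(\mathbf{x})\rangle$ for the steady state ideal and let $J$ denote the ideal generated by the proposed binomials. The $m$ generators of $I$ are the polynomials $(\Sigma\,\Psi)_i = \sum_{j=1}^n \Sigma_{ij}\,\mathbf{x}^{y_j}$, whose $\mathbb{R}$-linear span inside $\mathbb{R}[\mathbf{x}]$ is precisely $\{\mathbf{w}\cdot\Psi(\mathbf{x}) \mid \mathbf{w}\in\mbox{rowspace}(\Sigma)\}$, and $\mbox{rowspace}(\Sigma) = (\ker\Sigma)^\perp$. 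Each binomial generator of $J$ can be written as $\mathbf{v}^{(k)}_{ij}\cdot\Psi(\mathbf{x})$, where $\mathbf{v}^{(k)}_{ij} := [\mathbf{b}_k]_i\,\mathbf{e}_j - [\mathbf{b}_k]_j\,\mathbf{e}_i \in \mathbb{R}^n$ and $\mathbf{e}_j$ denotes the standard basis vectors. Since an ideal is unchanged when its generating set is replaced by another set with the same $\mathbb{R}$-span, and since $\mathbf{w}\mapsto\mathbf{w}\cdot\Psi(\mathbf{x})$ is linear, it suffices to prove the subspace identity
\[\mbox{span}\{\mathbf{v}^{(k)}_{ij} \mid i,j\in\Lambda_k,\ k=1,\ldots,d\} = (\ker\Sigma)^\perp \quad \mbox{in } \mathbb{R}^n.\]

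The inclusion $\subseteq$ is the easy direction and immediately yields $J\subseteq I$. For fixed $k$ and $i,j\in\Lambda_k$, the vector $\mathbf{v}^{(k)}_{ij}$ is orthogonal to every basis vector $\mathbf{b}_l$ of $\ker\Sigma$: when $l\neq k$ the supports $\Lambda_k$ and $\Lambda_l$ are disjoint, so $\mathbf{v}^{(k)}_{ij}\cdot\mathbf{b}_l = 0$; when $l = k$ a direct computation gives $\mathbf{v}^{(k)}_{ij}\cdot\mathbf{b}_k = [\mathbf{b}_k]_i[\mathbf{b}_k]_j - [\mathbf{b}_k]_j[\mathbf{b}_k]_i = 0$. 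Hence $\mathbf{v}^{(k)}_{ij}\in(\ker\Sigma)^\perp$.

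The reverse inclusion I would establish by a dimension count. Restricted to the coordinates indexed by $\Lambda_k$, every $\mathbf{v}^{(k)}_{ij}$ lies in the hyperplane orthogonal to $\mathbf{b}_k|_{\Lambda_k}$; because $\mbox{supp}(\mathbf{b}_k)=\Lambda_k$, this restriction has no zero entries, so fixing a single index $i_0\in\Lambda_k$ shows that the $|\Lambda_k|-1$ vectors $\mathbf{v}^{(k)}_{i_0 j}$, $j\in\Lambda_k\setminus\{i_0\}$, are linearly independent and therefore span that full $(|\Lambda_k|-1)$-dimensional hyperplane. Since the $\Lambda_k$ are disjoint and partition $\{1,\ldots,n\}$, the per-block spans are independent and add up to
\[\dim\,\mbox{span}\{\mathbf{v}^{(k)}_{ij}\} = \sum_{k=1}^d \bigl(|\Lambda_k|-1\bigr) = n - d.\]
By rank--nullity $\dim(\ker\Sigma)^\perp = n - \dim\ker\Sigma = n-d$ as well, so the inclusion of the previous paragraph is forced to be an equality of subspaces. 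This gives $I\subseteq J$, and together with $J\subseteq I$ the theorem follows.

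The conceptual heart of the argument, and the step I expect to require the most care, is the dimension bookkeeping above, since it is here that both hypotheses enter essentially: disjointness of the supports is what allows the per-block dimensions to be summed without overcounting, while the exact-support condition $\mbox{supp}(\mathbf{b}_k)=\Lambda_k$ (no incidental zeros) is precisely what forces each block to contribute $|\Lambda_k|-1$ rather than something smaller. If either hypothesis were weakened, the binomials would in general span only a proper subspace of $(\ker\Sigma)^\perp$ and generate a strictly smaller ideal, so that the clean ideal identity of the theorem would fail.
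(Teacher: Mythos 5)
The paper does not prove this statement---it is quoted verbatim as Theorem 3.3 of \cite{M-D-S-C}---so there is no internal proof to compare against; your argument is correct and is essentially the linear-algebra proof given in that reference. Reducing the ideal identity to the subspace identity $\mbox{span}\{\mathbf{v}^{(k)}_{ij}\}=(\mbox{ker}\,\Sigma)^{\perp}$, checking orthogonality to each $\mathbf{b}_l$ using disjointness of supports, and closing the gap with the dimension count $\sum_k(|\Lambda_k|-1)=n-d=\dim(\mbox{ker}\,\Sigma)^{\perp}$ is exactly the right (and standard) route, and your use of $\mbox{supp}(\mathbf{b}_k)=\Lambda_k$ to guarantee the $|\Lambda_k|-1$ independent binomial vectors per block is where the hypotheses are genuinely needed.
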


It is striking that the binomials in Theorem \ref{theorem1} and Theorem \ref{theorem012} are both constructed by first partitioning the complexes of the chemical reaction network into disjoint components and then computing a \emph{basis} for a specific kernel restricted to the support of these components. Furthermore, the components in Theorem \ref{theorem1} have a clear interpretation in terms of the reaction network---they are the \emph{linkage classes} of the network's reaction graph. The components in Theorem \ref{theorem012} are less well-understood and are left as computational constructs in \cite{M-D-S-C}.

It is the clarification between the connection between Theorem \ref{theorem1} and Theorem \ref{theorem012}, and of complex balanced steady states and toric steady states in general, which will be the main concern of this paper. We will show that the supports of the components derived in Theorem \ref{theorem012} can often be corresponded to linkage classes just as they are in Theorem \ref{theorem1}. These linkage classes, however, will not be those of the original reaction network; rather, they will be the linkage classes of a related generalized reaction network which we will call a \emph{translated chemical reaction network}.

\section{Main Results}
\label{mainresultssection}

This section, we introduce the notion of \emph{network translation} and show how this concept can be used to characterize mass action systems with toric steady states.


\subsection{Translated Chemical Reaction Networks}
\label{section1}




The following is the foundational new concept of this paper.
\begin{definition}
\label{translation}
Consider a chemical reaction network $\mathcal{N} = (\mathcal{S}, \mathcal{C}, \mathcal{R})$ and a generalized chemical reaction network $\tilde{\mathcal{N}} = (\mathcal{S},\tilde{\mathcal{C}},\mathcal{CR}_K,\tilde{\mathcal{R}})$ where $\mathcal{CR}_K \subseteq \mathcal{CR}$. We will say $\tilde{\mathcal{N}}$ is a \textbf{translation} of $\mathcal{N}$ if:
\begin{enumerate}
\item
There is a bijection $h_1: \mathcal{R} \mapsto \tilde{\mathcal{R}}$ so that $ \tilde{y}_{\tilde{\rho}'(h_1(i))} - \tilde{y}_{\tilde{\rho}(h_1(i))} = y_{\rho'(i)} - y_{\rho(i)}$ for all $i=1, \ldots, r$;
\item
There is a surjection $h_2: \mathcal{CR} \mapsto \tilde{\mathcal{CR}}$ so that $h_2(\rho(i)) = \tilde{\rho}(h_1(i))$ for all $i=1, \ldots, r$; and
\item
The kinetic complex set $\mathcal{CR}_K \subseteq \mathcal{CR}$ of $\tilde{\mathcal{N}}$ contains exactly one element from the set $h_2^{-1}(j)$ for all $j \in \tilde{\mathcal{CR}}$, and this element is the kinetic complex associated with $j \in \tilde{\mathcal{CR}}$. The rest of the kinetic complexes (corresponding to strictly product complexes) may be drawn arbitrarily from $\mathcal{CR}_K$.
\end{enumerate}
The process of finding a generalized network $\tilde{\mathcal{N}}$ which is a translation of $\mathcal{N}$ will be called \textbf{network translation}.
\end{definition}

\begin{remark}
The labeling of the sets in the translation $\tilde{\mathcal{N}} = (\mathcal{S},\tilde{\mathcal{C}},\mathcal{CR}_K,\tilde{\mathcal{R}})$ differs from the standard definition of a generalized chemical reaction network in several very important ways. For a generalized chemical reaction network, $\tilde{\mathcal{C}}$ corresponded to the set of kinetic complexes, whereas for a translation it corresponds to the translated stoichiometric complexes of $\tilde{\mathcal{N}}$. We denote $\mathcal{CR}_K$ instead to correspond to the kinetic complexes of $\tilde{\mathcal{N}}$. Also note that the elements of the kinetic complex set $\mathcal{CR}_K$ are drawn from the reactant complex set $\mathcal{CR}$ of the \emph{original} network $\mathcal{N} = (\mathcal{S},\mathcal{C},\mathcal{R})$. To avoid confusion with the original network $\mathcal{N}$, we will also re-label the deficiencies of the translation $\tilde{\mathcal{N}}$ so that $\tilde{\delta}$ corresponds to the structural deficiency of $\tilde{\mathcal{N}}$ and $\tilde{\delta}_K$ corresponds to the kinetic deficiency of $\tilde{\mathcal{N}}$.
\end{remark}

\begin{remark}
The relationship between $\mathcal{R}$, $\tilde{\mathcal{R}}$, $\mathcal{CR}$, and $\tilde{\mathcal{CR}}$ through the mappings $h_1, h_2, \rho,$ and $\tilde{\rho}$ can be visualized as
\[\begin{array}{c} \displaystyle{\mathcal{R} \; \; \stackrel{h_1}{\longrightarrow} \; \; \tilde{\mathcal{R}}} \\ \displaystyle{\mathcal{N}: \hspace{1.25cm} \rho \downarrow \hspace{1.25cm} \downarrow \tilde{\rho} \hspace{1.25cm} :\tilde{\mathcal{N}}} \\ \displaystyle{\mathcal{CR} \; \stackrel{h_2}{\longrightarrow} \; \tilde{\mathcal{CR}},} \end{array}\]
This representation is useful when interpreting condition $2.$ of Definition \ref{translation}.
\end{remark}

\begin{remark}
Throughout the application portion of this paper, we will be \emph{constructing} translated networks and will therefore be able to control the indexing such that we will be able to take $h_1$ to be the identity mapping. In general, however, we permit the indexing of reactions to change in $\tilde{\mathcal{N}}$.
\end{remark}

The translated network $\tilde{\mathcal{N}}$ can be thought of as the network produced by translating the reactions of the original network $\mathcal{N}$ by adding or subtracting species to the left- and right-hand sides of each reaction, while preserving the original complexes as the kinetic complexes of the new (generalized) network. This operation conserves reactions, does not alter reaction vectors, and maps source complexes to source complexes. Up to reindexing, these conditions are the ones captured in the three requirements of Definition \ref{translation}.

There are several anomalous situations, however, which may arise from property $3.$ of Definition \ref{translation}. Notably, if multiple source complexes in $\mathcal{N}$ are mapped to the same source complex in $\tilde{\mathcal{N}}$, then the kinetic complex set $\mathcal{CR}_K$ is not necessarily uniquely defined. We therefore introduce the following strengthenings of network translation.

\begin{definition}
\label{supp}
Consider a chemical reaction network $\mathcal{N} = (\mathcal{S},\mathcal{C},\mathcal{R})$ and a translation $\tilde{\mathcal{N}} = (\mathcal{S},\tilde{\mathcal{C}},\mathcal{CR}_K,\tilde{\mathcal{R}})$. Then:
\begin{enumerate}
\item
We will say $\tilde{\mathcal{N}}$ is a \textbf{proper translation} of $\mathcal{N}$ if $h_2$ is injective as well as surjective.
\item
We will say $\tilde{\mathcal{N}}$ is a \textbf{strong translation} of $\mathcal{N}$ if $\tilde{\mathcal{N}}$ is weakly reversible.
\end{enumerate}
A translation $\tilde{\mathcal{N}}$ will be called \textbf{improper} if it is not proper.
\end{definition}


\begin{remark}
Proper translation removes the ambiguity in defining the kinetic complexes to source complexes in $\tilde{\mathcal{N}}$ (since $h^{-1}_2(j)$ is unique for each $j \in \tilde{\mathcal{CR}}$) while strong translation removes the ambiguity in defining the kinetic complexes to strictly product complexes in $\tilde{\mathcal{N}}$ (since there are none). Since every reactant complex of $\tilde{\mathcal{N}}$ appears as a kinetic complex for proper translations, we can define proper translations as $\tilde{\mathcal{N}} = (\mathcal{S},\tilde{\mathcal{C}},\mathcal{CR},\tilde{\mathcal{R}})$.
\end{remark}



For an example of how the network translation works, consider the standard Lotka-Volterra predator-prey system. The basic ecological interactions can be crudely modeled as the chemical reaction network $\mathcal{N}$ given by
\[\begin{split} \mathcal{A}_1 \; & \stackrel{k_1}{\longrightarrow} \; 2\mathcal{A}_1 \\ \mathcal{A}_1 + \mathcal{A}_2 \; & \stackrel{k_2}{\longrightarrow} \; 2 \mathcal{A}_2 \\ \mathcal{A}_2 \; & \stackrel{k_3}{\longrightarrow} \; 0 \end{split}\]
where $\mathcal{A}_1$ corresponds to the prey and $\mathcal{A}_2$ corresponds to the predator.


Now consider the generalized chemical reaction network $\tilde{\mathcal{N}}_1$ given by
\[\begin{array}{c} \; \; \; \; \; \; \; \; \mathcal{A}_1 \; \; \; \cdots \; \; \; 0 \; \stackrel{k_1}{\longrightarrow} \; \mathcal{A}_1 \; \; \; \cdots \; \; \; \mathcal{A}_1 + \mathcal{A}_2\\ {}_{k_3} \nwarrow \; \; \; \swarrow_{k_2} \; \; \; \\ \mathcal{A}_2 \; \; \; \\ \vdots \; \; \; \\ \mathcal{A}_2 \; \; \; \end{array}\]
where the dotted lines correspond the complexes $\tilde{\mathcal{C}}_1 = 0$, $\tilde{\mathcal{C}}_2 = \mathcal{A}_1$, and $\tilde{\mathcal{C}}_3 = \mathcal{A}_2$ to the kinetic complexes $\mathcal{A}_1$, $\mathcal{A}_1+\mathcal{A}_2$, and $\mathcal{A}_2$ respectively. This network can be obtained from the original network by translating each reaction according to the following scheme:
\[\begin{split} \mathcal{A}_1 \; & \stackrel{k_1}{\longrightarrow} \; 2\mathcal{A}_1 \; \; \; \; \; (- \mathcal{A}_1) \; \; \; \; \; \; \; \; \; \; \; \; \;\;\;\; \; \; \; \; \; \; 0 \; \stackrel{k_1}{\longrightarrow} \; \mathcal{A}_1 \\ \mathcal{A}_1 + \mathcal{A}_2 \; & \stackrel{k_2}{\longrightarrow} \; 2 \mathcal{A}_2 \; \; \; \; \; (- \mathcal{A}_2) \; \; \; \; \; \Longrightarrow \; \; \; \; \; \; \mathcal{A}_1 \; \stackrel{k_2}{\longrightarrow} \; \mathcal{A}_2\\ \mathcal{A}_2 \; & \stackrel{k_3}{\longrightarrow} \; 0 \; \; \; \; \; \; \; \; \; (+ 0) \; \; \; \; \; \; \; \; \; \; \; \; \; \; \; \; \; \; \; \; \; \; \mathcal{A}_2 \; \stackrel{k_3}{\longrightarrow} \; 0.\end{split}\]
It follows that the reactions are in a one-to-one correspondence and that the associated reaction vectors are the same. Furthermore, since the mapping from the source complexes is unique and the network is weakly reversible, we have that $\tilde{\mathcal{N}}_1 = (\mathcal{S},\tilde{\mathcal{C}},\mathcal{CR},\tilde{\mathcal{R}})$ is a proper and strong translation of $\mathcal{N}$.

It is interesting to note that translations are not unique, even translations which are strong and proper. We could have, for instance, chosen the reaction translations 
\[\begin{split} \mathcal{A}_1 \; & \stackrel{k_1}{\longrightarrow} \; 2\mathcal{A}_1 \; \; \; \; \; (-\mathcal{A}_1 + \mathcal{A}_2) \; \; \; \; \; \; \; \; \; \; \; \; \; \; \; \; \; \; \;\; \mathcal{A}_2 \; \stackrel{k_1}{\longrightarrow} \; \mathcal{A}_1 + \mathcal{A}_2 \\ \mathcal{A}_1  + \mathcal{A}_2 \; & \stackrel{k_2}{\longrightarrow} \; 2 \mathcal{A}_2 \; \; \; \; \; (- \mathcal{A}_2) \; \; \; \; \; \; \; \; \; \; \Longrightarrow \; \; \; \; \; \; \; \; \; \; \; \mathcal{A}_1 \; \stackrel{k_2}{\longrightarrow} \; \mathcal{A}_2\\ \mathcal{A}_2 \; & \stackrel{k_3}{\longrightarrow} \; 0 \; \; \; \; \; \; \; \; \; (+ \mathcal{A}_1) \; \; \; \; \; \; \; \; \; \; \; \; \; \; \; \; \; \; \; \;\; \mathcal{A}_1 + \mathcal{A}_2 \; \stackrel{k_3}{\longrightarrow} \; \mathcal{A}_1.\end{split}\]
This gives the strongly translated chemical reaction network $\tilde{\mathcal{N}}_2$
\[\begin{array}{c} \; \; \; \; \; \; \mathcal{A}_1 \; \; \; \cdots \; \; \; \mathcal{A}_2 \; \stackrel{k_1}{\longrightarrow} \; \mathcal{A}_1+\mathcal{A}_2 \; \; \; \cdots \; \; \; \mathcal{A}_2\\ \; \; \; \; \; \; {}_{k_2} \nwarrow \; \; \; \swarrow_{k_3} \; \; \; \; \; \; \; \; \; \\ \; \; \; \; \; \; \mathcal{A}_1  \; \; \; \; \; \; \; \; \; \\ \; \; \; \; \; \vdots \; \; \; \; \; \; \; \; \\ \mathcal{A}_1 + \mathcal{A}_2 \; \; \; \end{array}\]
This translation is also proper, but has the reaction cycle oriented in the opposite direction.

\subsection{Properly Translated Mass Action Systems}
\label{section2}


We assign a kinetics to a proper translation in the following way.
\begin{definition}
\label{properkinetic}
Suppose $\tilde{\mathcal{N}} = (\mathcal{S},\tilde{\mathcal{C}},\mathcal{CR},\tilde{\mathcal{R}})$ is a proper translation of a chemical reaction network $\mathcal{N} = (\mathcal{S},\mathcal{C},\mathcal{R})$ and $\mathcal{M} = (\mathcal{S},\mathcal{C},\mathcal{R},k)$ is a mass action system corresponding to $\mathcal{N}$. Then we define the \textbf{properly translated mass action system} of $\mathcal{M}$ to be the generalized mass action system $\tilde{\mathcal{M}} = (\mathcal{S},\tilde{\mathcal{C}},\mathcal{CR},\tilde{\mathcal{R}},\tilde{k})$ where $\tilde{k}_{h_1(i)} = k_i$. 
\end{definition}

\begin{remark}
This relationship is the natural correspondence since we make the same correspondence for rate constants as we make for reactions. In other words, for proper translations, we will simply transfer the rate constant along with the reaction in the translation process. The correspondence for \emph{improper} translations will be more complicated, if we can make a sensible correspondence at all (see Definition \ref{improperkinetic}).
\end{remark}

The following relates the dynamics of a properly translated mass action system $\tilde{\mathcal{M}}$ to the original mass action system $\mathcal{M}$.

\begin{lemma}
\label{lemma231}
Suppose $\tilde{\mathcal{M}} = (\mathcal{S},\tilde{\mathcal{C}},\mathcal{CR}_K,\tilde{\mathcal{R}},\tilde{k})$ is a properly translated mass action system of $\mathcal{M} = (\mathcal{S},\mathcal{C},\mathcal{R},k)$. Then the generalized mass ation system (\ref{gde}) governing $\tilde{\mathcal{M}}$ is identical to the mass action system (\ref{de}) governing $\mathcal{M}$. In particular, the two systems have the same steady states.
\end{lemma}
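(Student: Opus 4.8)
The plan is to write both dynamical systems in the reaction-centered summation form (\ref{de}) and to verify that they agree summand-by-summand. Writing the generalized mass action system (\ref{gde}) governing $\tilde{\mathcal{M}}$ in this same form, and indexing its reactions by $h_1(i)$ for $i = 1, \ldots, r$ (which is legitimate since $h_1$ is a bijection by condition $1.$ of Definition \ref{translation}), each summand of $\tilde{\mathcal{M}}$ consists of three pieces: a rate constant $\tilde{k}_{h_1(i)}$, the reaction vector $\tilde{y}_{\tilde{\rho}'(h_1(i))} - \tilde{y}_{\tilde{\rho}(h_1(i))}$ built from the translated stoichiometric complexes $\tilde{\mathcal{C}}$, and a monomial $\mathbf{x}^{\tilde{y}}$ whose exponent is the kinetic complex (drawn from $\mathcal{CR}_K \subseteq \mathcal{CR}$) assigned to the source complex $\tilde{\rho}(h_1(i))$. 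The goal is to identify each of these three pieces with the corresponding piece of the $i^{th}$ summand of (\ref{de}).

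First I would match the rate constants: by Definition \ref{properkinetic} we have $\tilde{k}_{h_1(i)} = k_i$ by construction. Next I would match the reaction vectors: condition $1.$ of Definition \ref{translation} gives $\tilde{y}_{\tilde{\rho}'(h_1(i))} - \tilde{y}_{\tilde{\rho}(h_1(i))} = y_{\rho'(i)} - y_{\rho(i)}$ immediately. The only remaining, and genuinely substantive, step is to show that the monomial attached to reaction $h_1(i)$ in $\tilde{\mathcal{M}}$ is exactly $\mathbf{x}^{y_{\rho(i)}}$, the monomial attached to reaction $i$ in $\mathcal{M}$.

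This monomial identification is the crux of the argument, and the main obstacle. By condition $2.$ of Definition \ref{translation}, the source complex of reaction $h_1(i)$ satisfies $\tilde{\rho}(h_1(i)) = h_2(\rho(i))$. By condition $3.$, the kinetic complex assigned to a source complex $j \in \tilde{\mathcal{CR}}$ is the unique element of $\mathcal{CR}_K$ lying in $h_2^{-1}(j)$; applied to $j = h_2(\rho(i))$, this kinetic complex is the unique element of $h_2^{-1}(h_2(\rho(i)))$. Here properness is essential: since $\tilde{\mathcal{M}}$ is a \emph{proper} translation, $h_2$ is injective by Definition \ref{supp}, so $h_2^{-1}(h_2(\rho(i))) = \{ \rho(i) \}$ and the assigned kinetic complex is precisely the original reactant complex $\rho(i)$, whose stoichiometric vector in $\mathcal{N}$ is $y_{\rho(i)}$. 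Hence the monomial is $\mathbf{x}^{y_{\rho(i)}}$, as required. With all three pieces matched, the right-hand sides of (\ref{de}) and (\ref{gde}) coincide term-by-term as vector fields on $\mathbb{R}_{>0}^m$, so the two systems are identical and share the same positive steady state set (\ref{equilibrium}). It is worth flagging that the chase through $h_2 \circ \rho$ and back through $h_2^{-1}$ only pins the monomial down because injectivity of $h_2$ forces the preimage to be a singleton; for a merely improper translation $h_2^{-1}(h_2(\rho(i)))$ may contain several candidates, which is exactly why Definition \ref{properkinetic} is restricted to proper translations and why the more delicate correspondence of Definition \ref{improperkinetic} is needed in general.
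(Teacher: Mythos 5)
Your proposal is correct and follows essentially the same route as the paper: the paper also reduces the claim to matching rate constants (via Definition \ref{properkinetic}), reaction vectors (via condition $1.$ of Definition \ref{translation}), and monomials, with the monomial identification resting on exactly the chain $\tilde{\rho}(h_1(i)) = h_2(\rho(i))$ and $h_2^{-1}(h_2(\rho(i))) = \rho(i)$ from injectivity of $h_2$. The only cosmetic difference is that the paper phrases the comparison in the matrix form (\ref{de2})/(\ref{gde}) by showing $I_k\,\Psi(\mathbf{x}) = \tilde{I}_k\,\Psi_K(\mathbf{x})$ entrywise, while you compare the summation form (\ref{de}) term by term.
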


\begin{proof}
Consider a chemical reaction network $\mathcal{N} = (\mathcal{S},\mathcal{C},\mathcal{R})$ with corresponding mass action system $\mathcal{M} = (\mathcal{S},\mathcal{C},\mathcal{R},k)$ and a proper translation $\tilde{\mathcal{N}} = (\mathcal{S},\tilde{\mathcal{C}},\mathcal{CR},\tilde{\mathcal{R}})$ of $\mathcal{N}$. Let $\tilde{\mathcal{M}} = (\mathcal{S},\tilde{\mathcal{C}},\mathcal{CR},\tilde{\mathcal{R}},\tilde{k})$ be a properly translated mass action system of $\mathcal{M} = (\mathcal{S},\mathcal{C},\mathcal{R},k)$ defined by Definition \ref{properkinetic}. Without loss of generality, we will index the reactions of $\tilde{\mathcal{N}}$ so that $h_1$ may be taken to be the identity.

Since $\tilde{\mathcal{N}}$ is a translation of $\mathcal{N}$, it follows from property $1.$ of Definition \ref{translation} that the system (\ref{de2}) governing $\mathcal{M}$ is given by
\[\displaystyle{\frac{d\mathbf{x}}{dt}} = Y \; I_a \; I_k \; \Psi(\mathbf{x}) = \tilde{Y} \; \tilde{I}_a \; I_k \; \Psi(\mathbf{x})\]
where $\tilde{Y}$ and $\tilde{I}_a$ correspond to the translation $\tilde{\mathcal{N}}$. It remains to relate the rate vector $R(\mathbf{x}) := I_k \; \Psi(\mathbf{x})$ to $\tilde{R}(\mathbf{x}) := \tilde{I}_k \; \Psi_K(\mathbf{x})$ corresponding to $\tilde{\mathcal{M}}$. Notice that we have:
\begin{itemize}
\item $[\Psi_K(\mathbf{x})]_j = \mathbf{x}^{y_{h_2^{-1}(j)}}$ for all $j \in \tilde{\mathcal{CR}}$ by Property $3.$ of Definition \ref{translation};
\item $\tilde{k}_i = k_i$, $i=1, \ldots, r$, by Definition \ref{properkinetic};
\item $h_2(\rho(i)) = \tilde{\rho}(i)$ for all $i=1, \ldots, r$, by Property $2.$ of Definition \ref{translation}.
\item $h_2^{-1}(h_2(j)) = j$ for all $j \in \mathcal{CR}$ by Condition $1.$ of Definition \ref{supp}.
\end{itemize}
It follows that, for all $i = 1, \ldots, r$, $\tilde{R}_i(\mathbf{x}) = \tilde{k}_i \mathbf{x}^{y_{h_2^{-1}(\tilde{\rho}(i))}} = k_i \mathbf{x}^{y_{h_2^{-1}(\tilde{\rho}(i))}}= k_i \mathbf{x}^{y_{h_2^{-1}(h_2(\rho(i)))}}= k_i \mathbf{x}^{y_{\rho(i)}} = R_i(\mathbf{x})$. Consequently, we have
\begin{equation}
 \label{382}
\frac{d\mathbf{x}}{dt} = \tilde{Y} \; \tilde{I}_a \; I_k \; \Psi(\mathbf{x}) = \tilde{Y} \; \tilde{I}_a \; \tilde{I}_k \; \Psi_K(\mathbf{x})
\end{equation}
so that $\mathcal{M}$ and $\tilde{\mathcal{M}}$ have the same dynamics, and we are done.

\end{proof}

\begin{remark}
This result says that, for proper translations, transferring the rate constants along with the reaction arrows produces a generalized mass action system with the same dynamics as the original mass action system. The hope is that the steady states of the generalized mass action system produced by this translation can be more easily characterized than the steady states of the original mass action system can be through direct analysis. In Section \ref{section4} we will show how this intuition can be used to characterized mass action systems with toric steady states.
\end{remark}

\subsection{Improperly Translated Mass Action Systems}
\label{section3}




Sensibly defining a generalized mass action system $\tilde{\mathcal{M}} = (\mathcal{S},\tilde{\mathcal{C}},\mathcal{CR}_K,\mathcal{R},\tilde{k})$ for an improper translation $\tilde{\mathcal{N}} = (\mathcal{S},\tilde{\mathcal{C}},\mathcal{CR}_K,\mathcal{R})$ is more challenging than for proper translations because improper translations \emph{do not conserve} source complexes. That is to say, there is at least one source complex in $\mathcal{N}$ which does not appear as the kinetic complex of any source complex in $\tilde{\mathcal{N}}$. As a result, an improperly translated generalized mass action system $\tilde{\mathcal{M}}$ will necessarily have fewer monomials than the original mass action system $\mathcal{M}$, and a comprehensive dynamical result of the form Lemma \ref{lemma231} will not be possible.


In this section, we introduce conditions---which we call resolvability conditions---under which the \emph{steady state set} of a generalized mass action system $\tilde{\mathcal{M}}$ corresponding to an improper translation $\tilde{\mathcal{N}}$ can be shown to coincide with that of the original mass action system $\mathcal{M}$. The trick to making this correspondence will be in relating the source complexes in $\mathcal{CR} \setminus \mathcal{CR}_K$ to those in $\mathcal{CR}_K$. This will allow us to define the rate constants $\tilde{k}_i$ of the generalized mass action system $\tilde{\mathcal{M}}$ corresponding to $i \in \mathcal{R}$ such that $\rho(i) \in \mathcal{CR} \setminus \mathcal{CR}_K$ in such a way as to preserve the steady state set.

We start by giving the following definitions.

\begin{definition}
Suppose $\tilde{\mathcal{N}} = (\mathcal{S},\tilde{\mathcal{C}},\mathcal{CR}_K,\tilde{\mathcal{R}})$ is an improper translation of a chemical reaction network $\mathcal{N} = (\mathcal{S},\mathcal{C},\mathcal{R})$. Then:
\begin{enumerate}
\item
A translated source complex $\tilde{\mathcal{C}}_j \in \tilde{\mathcal{CR}}$ will be called an \textbf{improper} or \textbf{conflicted complex} if there exists $p, q \in h^{-1}_2(j)$ such that $p \not= q$. The set of improper complexes will be denoted $\tilde{\mathcal{C}}_I \subseteq \tilde{\mathcal{C}}$.
\item
A reaction $\mathcal{R}_i \in \mathcal{R}$ will be called an \textbf{improper} or \textbf{conflicted reaction} if $\rho(i) \in \mathcal{CR} \setminus \mathcal{CR}_K$. The set of improper reactions will be denoted $\mathcal{R}_I \subseteq \mathcal{R}$.
\item
A source complex $\mathcal{C}_j \in \mathcal{CR}_K$ will be called \textbf{kinetically relevant} to the reaction $\mathcal{R}_i \in \mathcal{R}_I$ if $h_2^{-1}(h_2(\rho(i))) \cap \mathcal{CR}_K = j$. The index of the \textbf{kinetically relevant complex} of $\mathcal{R}_i$ will be denoted $\rho(i)_K = h_2^{-1}(h_2(\rho(i))) \cap \mathcal{CR}_K$.
\end{enumerate}
\end{definition}

\begin{remark}
These definitions clarify some the objects which are unique to improper translations $\tilde{\mathcal{N}}$. A translated complex will be called an \emph{improper complex} if multiple source complexes in $\mathcal{CR}$ are translated to it and a reaction will be called an \emph{improper reaction} if it is assigned a different kinetic complex in the translation $\tilde{\mathcal{N}}$ than its own source complex in $\mathcal{N}$. Finally, the index of the kinetically relevant complex of the $i^{th}$ improper reaction is denoted $\rho(i)_K$.
\end{remark}

\begin{remark}
Notice that for all $\mathcal{R}_i \in \mathcal{R} \setminus \mathcal{R}_I$ we have $\rho(i)_K = \rho(i)$. That is to say, the kinetic relevant complex corresponding to a proper reaction coincides with the pre-translation source complex.
\end{remark}

We now want to explicitly relate the complexes in $\mathcal{CR} \setminus \mathcal{CR}_K$ to those in $\mathcal{CR}_K$. In order to accomplish this, we introduce the following subspace and weak resolvability condition for improper translations.

\begin{definition}
\label{improperkineticsubspace}
Suppose $\tilde{\mathcal{N}} = (\mathcal{S},\tilde{\mathcal{C}},\mathcal{CR}_K,\tilde{\mathcal{R}})$ is an improper translation of a chemical reaction network $\mathcal{N} = (\mathcal{S},\mathcal{C},\mathcal{R})$. We define the \textbf{improper kinetic subspace} $\tilde{S}_I$ of $\tilde{\mathcal{N}}$ to be
\[\tilde{S}_{I} = \mbox{span} \left\{ \bigcup_{i \in \mathcal{R}_I} \left\{ y_{\rho(i)} - y_{\rho(i)_K} \right\} \right\}.\]
We will say that an improper translation $\tilde{\mathcal{N}}= (\mathcal{S},\tilde{\mathcal{C}},\mathcal{CR}_K,\tilde{\mathcal{R}})$ of a chemical reaction network $\mathcal{N} = (\mathcal{S},\mathcal{C},\mathcal{R})$ is \textbf{weakly resolvable} if it is strong and if $\tilde{S}_{I} \subseteq \tilde{S}$.
\end{definition}


The improper kinetic subspace $\tilde{S}_{I}$ is the space given by the span of stoichiometric differences of complexes mapped to an improper complex. The primary consequence of weak resolvability is that it explicitly relates the monomials corresponding to the source complexes of improper reactions to the monomial corresponding to the kinetically relevant complex of the reaction. This is the content of the following result.

\begin{lemma}
\label{lemma00}
Suppose $\tilde{\mathcal{N}} = (\mathcal{S},\tilde{\mathcal{C}},\mathcal{CR}_K,\tilde{\mathcal{R}})$ is a weakly resolvable improper translation of a chemical reaction network $\mathcal{N} = (\mathcal{S},\mathcal{C},\mathcal{R})$. Let $\left\{ y_{p_j}-y_{q_j} \right\}_{j=1}^{\tilde{s}}$ where $p_j, q_j \in \mathcal{CR}$ denote any subset of the pairs in (\ref{tildeS}) which forms a basis of $\tilde{S}$. Then, for every $\mathcal{R}_i \in \mathcal{R}_I$ there exist constants $c_i$, $i=1, \ldots, \tilde{s}$, such that
\begin{equation}
 \label{term}
\mathbf{x}^{y_{\rho(i)}} = \left[ \tilde{K}_{\rho(i),\rho(i)_K}(\mathbf{x}) \right] \; \mathbf{x}^{y_{\rho(i)_K}}
\end{equation}
where the \textbf{weak kinetic adjustment factor} of $\rho(i)$ and $\rho(i)_K$, $\tilde{K}_{\rho(i),\rho(i)_K}(\mathbf{x})$, is given by
\begin{equation}
\label{adjustment}
\tilde{K}_{\rho(i),\rho(i)_K}(\mathbf{x}) = \prod_{i=1}^{\tilde{s}} \left( \frac{\mathbf{x}^{y_{p_i}}}{\mathbf{x}^{y_{q_i}}} \right)^{c_i}.
\end{equation}
\end{lemma}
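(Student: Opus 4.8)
The plan is to reduce the multiplicative identity (\ref{term}) to a single linear relation living inside the kinetic-order subspace $\tilde{S}$, and then to exploit the fact that, for each fixed $\mathbf{x} \in \mathbb{R}_{>0}^m$, the monomial map $\mathbf{v} \mapsto \mathbf{x}^{\mathbf{v}}$ is a homomorphism from $(\mathbb{R}^m,+)$ into $(\mathbb{R}_{>0},\times)$. All of the content is in producing the correct vector identity; the passage from vectors to monomials is then immediate, and (\ref{adjustment}) is nothing but the image of a basis expansion under this map.

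First I would fix an improper reaction $\mathcal{R}_i \in \mathcal{R}_I$ and observe that the vector $y_{\rho(i)} - y_{\rho(i)_K}$ is, by the very definition of the improper kinetic subspace in Definition \ref{improperkineticsubspace}, one of the generators of $\tilde{S}_I$, hence lies in $\tilde{S}_I$. Because $\tilde{\mathcal{N}}$ is weakly resolvable, we have $\tilde{S}_I \subseteq \tilde{S}$, so $y_{\rho(i)} - y_{\rho(i)_K} \in \tilde{S}$. Since $\{y_{p_j} - y_{q_j}\}_{j=1}^{\tilde{s}}$ is a basis of $\tilde{S}$, there then exist scalars, which I will write $c_1, \ldots, c_{\tilde{s}}$ but which genuinely depend on the chosen reaction, such that
\begin{equation*}
y_{\rho(i)} - y_{\rho(i)_K} = \sum_{j=1}^{\tilde{s}} c_j \left( y_{p_j} - y_{q_j} \right).
\end{equation*}
Applying $\mathbf{x}^{(\cdot)}$ to both sides and using $\mathbf{x}^{\mathbf{u}+\mathbf{v}} = \mathbf{x}^{\mathbf{u}} \mathbf{x}^{\mathbf{v}}$ together with $\mathbf{x}^{c\mathbf{u}} = (\mathbf{x}^{\mathbf{u}})^c$ would then yield
\begin{equation*}
\frac{\mathbf{x}^{y_{\rho(i)}}}{\mathbf{x}^{y_{\rho(i)_K}}} = \prod_{j=1}^{\tilde{s}} \left( \frac{\mathbf{x}^{y_{p_j}}}{\mathbf{x}^{y_{q_j}}} \right)^{c_j} = \tilde{K}_{\rho(i),\rho(i)_K}(\mathbf{x}),
\end{equation*}
and clearing the denominator gives exactly (\ref{term}).

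I do not expect a genuine obstacle here, since the mathematical content is a one-line basis expansion followed by exponentiation. The two points that require care are bookkeeping rather than substance. First, the constants are attached to one particular improper reaction, so the statement's reuse of the symbol $i$ both for the reaction $\mathcal{R}_i$ and for the product range $i = 1, \ldots, \tilde{s}$ in (\ref{adjustment}) is an abuse of notation; I would rename the summation/product index (as above) to keep the dependence on the reaction explicit. Second, because we work on the strictly positive orthant $\mathbf{x} \in \mathbb{R}_{>0}^m$, each factor $\mathbf{x}^{y_{p_j}}/\mathbf{x}^{y_{q_j}}$ is a positive real, so raising it to the possibly negative or non-integer exponent $c_j$ is well-defined. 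This positivity is precisely what licenses the homomorphism step, and it is the one hypothesis worth flagging explicitly rather than leaving implicit.
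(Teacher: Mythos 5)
Your proof is correct and follows essentially the same route as the paper's: use weak resolvability to place $y_{\rho(i)} - y_{\rho(i)_K}$ in $\tilde{S}$, expand in the chosen basis, and exponentiate over the positive orthant. Your explicit remarks on renaming the clashing index and on positivity licensing the exponentiation are reasonable clarifications of points the paper leaves implicit, but they do not change the argument.
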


\begin{proof}
Consider an arbitrary $\mathcal{R}_i \in \mathcal{R}_I$ and the difference $y_{\rho(i)} - y_{\rho(i)_K}$. Define a basis for $\tilde{S}$ by $\left\{ y_{p_i}-y_{q_i} \right\}_{i=1}^{\tilde{s}}$ where $p_i, q_i \in \mathcal{CR}$ by removing linearly dependent vectors from the generating set (\ref{tildeS}). Since $\tilde{S}_I \subseteq \tilde{S}$, it follows that we can write
\begin{equation}
\label{fkl}
y_{\rho(i)} - y_{\rho(i)_K} = \sum_{i=1}^{\tilde{s}} c_i ( y_{p_i} - y_{q_i} )
\end{equation}
where $c_1, \ldots, c_{\tilde{s}}$, are constants. The form (\ref{term}) follows directly by rearranging (\ref{fkl}) and raising the terms into the exponent of $\mathbf{x} \in \mathbb{R}^m_{>0}$, and we are done.
\end{proof}

The identity (\ref{term}) gives an explicit relationship between the monomials $\mathbf{x}^{y_{\rho(i)_K}}$ corresponding to kinetically relevant complexes and the monomials $\mathbf{x}^{y_{\rho(i)}}$ corresponding to complexes which are not kinetically relevant. We notice, however, that the weak kinetic adjustment factor (\ref{adjustment}) depends explicitly on $\mathbf{x} \in \mathbb{R}_{>0}^m$. In order to define conditions which remove this dependence, we must first introduce the following reaction graph.


\begin{definition}
\label{hypothetical}
Suppose $\tilde{\mathcal{N}} = (\mathcal{S},\tilde{\mathcal{C}},\mathcal{CR}_K,\tilde{\mathcal{R}})$ is an improper translation of a chemical reaction network $\mathcal{N} = (\mathcal{S},\mathcal{C},\mathcal{R})$ and $\mathcal{M} = (\mathcal{S},\mathcal{C},\mathcal{R},k)$ is a mass action system corresponding to $\mathcal{N}$. We define the \textbf{semi-proper reaction graph} of $\mathcal{\tilde{N}}$ to be the weighted reaction graph $\tilde{G}(\tilde{V},\tilde{E})$ with $\tilde{V} = \tilde{\mathcal{C}}$, $\tilde{E} = \tilde{\mathcal{R}}$, and edge weights given by
\[\tilde{E}_{h_1(i)} = \left\{ \begin{array}{ll} k_i, \; \; \; & \mbox{for } i \in \mathcal{R} \setminus \mathcal{R}_I \\ \tilde{k}_i, \; \; \; \; \; \; \; \; & \mbox{for } i \in \mathcal{R}_I \end{array} \right.\]
where $k_i$ are the rate constants of $\mathcal{M}$ and $\tilde{k}_i$ are undetermined positive constants.
\end{definition}


The semi-proper reaction graph is obtained by making the natural choice for the rate constants in the translation $\tilde{\mathcal{N}}$ for \emph{proper} reactions (i.e. the choice we made in Definition \ref{properkinetic}) while leaving the rest of the rate constants undetermined. In order to sensibly define the rate constants for improper reactions, we introduce the following strengthening of the earlier resolvability condition.




\begin{definition}
\label{stronglyresolvable}
Let $\tilde{\mathcal{N}} = (\mathcal{S},\tilde{\mathcal{C}},\mathcal{CR}_K,\tilde{\mathcal{R}})$ be a weakly resolvable improper translation of a chemical reaction network $\mathcal{N} = (\mathcal{S},\mathcal{C},\mathcal{R})$ and $\tilde{G}(\tilde{V},\tilde{E})$ denote the semi-proper reaction graph of $\tilde{\mathcal{N}}$. For $i \in \mathcal{R}_I$, we define the \textbf{strong kinetic adjustment factor} of $\rho(i)$ and $\rho(i)_K$ to be
\begin{equation}
\label{strongkineticadjustmentfactor}
\tilde{K}_{\rho(i),\rho(i)_K} = \prod_{i=1}^{\tilde{s}} \left( \frac{\tilde{K}_{h_2(p_i)}}{\tilde{K}_{h_2(q_i)}} \right)^{c_i}
\end{equation}
where $c_i, i=1, \ldots, \tilde{s}$, and $\left\{ y_{p_i}-y_{q_i} \right\}_{i=1}^{\tilde{s}}$ correspond to the weak kinetic adjustment factors $\tilde{K}_{\rho(i),\rho(i)_K}(\mathbf{x})$ given by (\ref{adjustment}) and Lemma \ref{lemma00}, and the tree constants $\tilde{K}_j$, $j=1, \ldots, \tilde{n}$, are defined according to (\ref{treeconstant}) for $\tilde{G}(\tilde{V},\tilde{E})$. We will say that $\tilde{\mathcal{N}}$ is \textbf{strongly resolvable} if, for every $i \in \mathcal{R}_I$, $\tilde{K}_{\rho(i),\rho(i)_K}$ does not depend on any $\tilde{k}_j$, $j \in \mathcal{R}_I$.
\end{definition}


\noindent Strong resolvability will allow us to finally define a translated mass action system for an improper translation.

\begin{definition}
\label{improperkinetic}
Consider a chemical reaction network $\mathcal{N} = (\mathcal{S},\mathcal{C},\mathcal{R})$ and an associated mass action system $\mathcal{M} = (\mathcal{S},\mathcal{C},\mathcal{R},k)$. Suppose $\tilde{\mathcal{N}} = (\mathcal{S},\tilde{\mathcal{C}},\mathcal{CR}_K,\tilde{\mathcal{R}})$ is a strongly resolvable improper translation of $\mathcal{N}$. Then we define the \textbf{improperly translated mass action system} to be the generalized mass action system $(\mathcal{S},\tilde{\mathcal{C}},\mathcal{C}_K,\tilde{\mathcal{R}},\tilde{k})$ with rate constants
\begin{equation}
\label{111}
\tilde{k}_{h_1(i)} = \left\{ \begin{array}{ll} k_i, \; \; \; \; \; & \mbox{for } i \in \mathcal{R} \setminus \mathcal{R}_I \\ \left( \tilde{K}_{\rho(i),\rho(i)_K} \right) k_i \; \; \; \; \; & \mbox{for } i \in \mathcal{R}_I.\end{array}\right.
\end{equation}
\end{definition}



We are now prepared to make a correspondence between the steady states of a mass action system $\mathcal{M}$ and the generalized mass action system $\tilde{\mathcal{M}}$ (defined by Definition \ref{improperkinetic}) associated with an improper translation $\tilde{\mathcal{N}}$.

\begin{lemma}
 \label{lemma331}
Consider an improper translation $\tilde{\mathcal{N}} = (\mathcal{S},\tilde{\mathcal{C}},\mathcal{CR}_K,\tilde{\mathcal{R}})$ of a chemical reaction network $\mathcal{N} = (\mathcal{S},\mathcal{C},\mathcal{R})$. Suppose that $\tilde{\mathcal{N}}$ is strongly resolvable and $\tilde{\delta}=0$. Let $\mathcal{M} = (\mathcal{S},\mathcal{C},\mathcal{R},k)$ be a mass action system corresponding to $\mathcal{N}$ and $\tilde{\mathcal{M}} = (\mathcal{S},\tilde{\mathcal{C}},\mathcal{CR}_K,\tilde{\mathcal{R}},\tilde{k})$ be an improperly translated mass action system corresponding to $\tilde{\mathcal{N}}$ and defined by Definition \ref{improperkinetic}. Then the steady states of the system (\ref{gde}) governing $\tilde{\mathcal{M}}$ coincide with those of the system (\ref{de}) governing $\mathcal{M}$.
\end{lemma}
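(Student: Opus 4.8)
The plan is to reduce the statement to a pointwise identity of the two reaction-rate vectors at steady states, exactly as in the proof of Lemma~\ref{lemma231}, the only new ingredient being that for improper reactions the two rates will agree \emph{only} on the steady state locus rather than identically. As in Lemma~\ref{lemma231}, property~$1.$ of Definition~\ref{translation} gives $\tilde{Y}\,\tilde{I}_a = Y\,I_a$ (after indexing so that $h_1$ is the identity), so both systems are of the form $\frac{d\mathbf{x}}{dt} = Y\,I_a\,(\cdot)$ applied to their respective rate vectors $R(\mathbf{x}) = I_k\Psi(\mathbf{x})$ and $\tilde{R}(\mathbf{x}) = \tilde{I}_k\tilde{\Psi}(\mathbf{x})$, where $\tilde{R}_i(\mathbf{x}) = \tilde{k}_i\,\mathbf{x}^{y_{\rho(i)_K}}$ by the choice of kinetic complexes. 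For proper reactions $i \in \mathcal{R}\setminus\mathcal{R}_I$ one has $R_i(\mathbf{x}) = \tilde{R}_i(\mathbf{x})$ verbatim, so it suffices to show $R_i(\mathbf{x}) = \tilde{R}_i(\mathbf{x})$ for $i \in \mathcal{R}_I$ whenever $\mathbf{x}$ is a steady state of \emph{either} system; mutual inclusion of the steady state sets then follows immediately.

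First I would use Lemma~\ref{lemma00} to rewrite, for $i \in \mathcal{R}_I$, the original rate as $R_i(\mathbf{x}) = k_i\,\tilde{K}_{\rho(i),\rho(i)_K}(\mathbf{x})\,\mathbf{x}^{y_{\rho(i)_K}}$ with the \emph{weak} ($\mathbf{x}$-dependent) factor~(\ref{adjustment}), whereas by Definition~\ref{improperkinetic} the translated rate is $\tilde{R}_i(\mathbf{x}) = k_i\,\tilde{K}_{\rho(i),\rho(i)_K}\,\mathbf{x}^{y_{\rho(i)_K}}$ with the \emph{strong} (constant) factor~(\ref{strongkineticadjustmentfactor}). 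Thus the two rates differ at $\mathbf{x}$ exactly by the ratio of the weak to the strong adjustment factor, and the whole problem collapses to showing this ratio equals $1$ at every steady state. The engine is the hypothesis $\tilde{\delta}=0$: at any steady state $\mathbf{x}$ of a generalized system carried by $\tilde{\mathcal{N}}$ (where, at a fixed $\mathbf{x}$, any positive weights—including $\mathbf{x}$-dependent ones—are just positive numbers), the vector $\tilde{A}_k\tilde{\Psi}(\mathbf{x})$ lies in $\mathrm{ker}(\tilde{Y})\cap\mathrm{Im}(\tilde{I}_a)$, a space of dimension $\tilde{\delta}=0$, so $\tilde{A}_k\tilde{\Psi}(\mathbf{x})=\mathbf{0}$ and $\mathbf{x}$ is generalized complex balanced. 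Since $\tilde{\mathcal{N}}$ is strong and hence weakly reversible, the tree-constant structure of $\mathrm{ker}(A_k)$ (the generalized analogue of Theorem~\ref{theorem1}, valid because $A_k$ depends only on the reaction graph) forces $\mathbf{x}^{y_{p_j}}/\mathbf{x}^{y_{q_j}} = \tilde{K}_{h_2(p_j)}/\tilde{K}_{h_2(q_j)}$ for each basis pair $(p_j,q_j)$ of $\tilde{S}$, since the reactant and product kinetic complexes of any reaction lie in a common linkage class. Substituting these identities into~(\ref{adjustment}) converts the weak factor into a product of tree-constant ratios, which is the form of the strong factor~(\ref{strongkineticadjustmentfactor}).

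There is one bookkeeping subtlety, which I expect to be the main obstacle: the tree constants in the strong factor are those of the \emph{semi-proper} reaction graph, whose improper edge weights are the undetermined constants $\tilde{k}_j$, $j\in\mathcal{R}_I$, whereas the complex-balancing relations above produce the tree constants of the \emph{actual} system at hand, with those weights instantiated at $\mathbf{x}$. Here strong resolvability (Definition~\ref{stronglyresolvable}) does precisely the required work: it guarantees that $\tilde{K}_{\rho(i),\rho(i)_K}$ is independent of the $\tilde{k}_j$, $j\in\mathcal{R}_I$, so the product of instantiated tree-constant ratios coming from complex balancing equals the symbolic strong factor, no matter which positive values the improper edge weights take. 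Applying this in the two cases—taking $\mathbf{x}$ a steady state of $\tilde{\mathcal{M}}$, where the instantiated weights are the $\tilde{k}$ of Definition~\ref{improperkinetic}, and taking $\mathbf{x}$ a steady state of $\mathcal{M}$, where by Lemma~\ref{lemma00} the original system coincides with the generalized system whose improper weights are $k_i\,\tilde{K}_{\rho(i),\rho(i)_K}(\mathbf{x})$—yields in both directions that the weak factor at $\mathbf{x}$ equals the strong factor. Hence $R_i(\mathbf{x})=\tilde{R}_i(\mathbf{x})$ for all $i$ at any steady state of either system, so $Y\,I_a\,R(\mathbf{x})=\mathbf{0}$ if and only if $\tilde{Y}\,\tilde{I}_a\,\tilde{R}(\mathbf{x})=\mathbf{0}$, and the two steady state sets coincide.
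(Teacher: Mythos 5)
Your proposal is correct and follows essentially the same route as the paper's proof: rewrite both systems over the common matrix $\tilde{Y}\,\tilde{I}_a$, express the improper rates via the weak adjustment factor of Lemma \ref{lemma00}, use $\tilde{\delta}=0$ to upgrade steady states to generalized complex balanced steady states, convert the monomial ratios $\mathbf{x}^{y_{p_j}}/\mathbf{x}^{y_{q_j}}$ into tree-constant ratios via Theorem \ref{Akernel}, and invoke strong resolvability to identify the resulting product with the constant factor of Definition \ref{improperkinetic}. The only cosmetic difference is that the paper packages the argument through a single state-dependent kinetic matrix $\tilde{I}_k(\mathbf{x})$ whose weights collapse to constants at steady state, whereas you run the two inclusions separately—which is, if anything, slightly more explicit about the converse direction.
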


\begin{proof}
Consider an improper translation $\tilde{\mathcal{N}} = (\mathcal{S},\tilde{\mathcal{C}},\mathcal{CR}_K,\tilde{\mathcal{R}})$ of a chemical reaction network $\mathcal{N} = (\mathcal{S},\mathcal{C},\mathcal{R})$ which is strongly resolvable. Without loss of generality, we will index the reactions of $\tilde{\mathcal{N}}$ so that $h_1$ may be taken as the identity. Let $\mathcal{M} = (\mathcal{S},\mathcal{C},\mathcal{R},k)$ be a mass action system corresponding to $\mathcal{N}$ and $\tilde{\mathcal{M}} = (\mathcal{S},\tilde{\mathcal{C}},\mathcal{CR}_K,\tilde{\mathcal{R}},\tilde{k})$ be an improperly translated mass action system corresponding to $\tilde{\mathcal{N}}$ and defined by Definition \ref{improperkinetic}. It follows from property $1.$ of Definition \ref{translation} that the system (\ref{de}) governing $\mathcal{M}$ can be written
\begin{equation}
\label{922}
\frac{d\mathbf{x}}{dt} = Y \; I_a \; I_k \; \Psi(\mathbf{x}) = \tilde{Y} \; \tilde{I}_a \; I_k \; \Psi(\mathbf{x})
\end{equation}
where $\tilde{Y}$ and $\tilde{I}_a$ correspond to the translation $\tilde{\mathcal{N}}$.

Now consider the rate vector $R(\mathbf{x}):= I_k \; \Psi(\mathbf{x}) \in \mathbb{R}_{>0}^r$ corresponding to $\mathcal{M}$ and the rate vector $\tilde{R}(\mathbf{x}) := \tilde{I}_k \; \Psi_K(\mathbf{x})$ of $\tilde{\mathcal{M}}$. Since $\tilde{\mathcal{N}}$ is improper, the vector $\Psi_K(\mathbf{x})$ contains a subset of the monomials in $\Psi(\mathbf{x})$ by property $3.$ of Definition \ref{translation}. Consequently, to relate $\mathcal{M}$ and $\tilde{\mathcal{M}}$ we need to remove the monomials in $\Psi(\mathbf{x})$ corresponding to complexes in $\mathcal{CR} \setminus \mathcal{CR}_K$. We will accomplish this by relating the monomials corresponding to complexes in $\mathcal{CR} \setminus \mathcal{CR}_K$ to the monomials in $\mathcal{CR}_K$ and absorbing the corresponding adjustment factor in the matrix $I_k$.

To accomplish this, recall that $\tilde{\mathcal{N}}$ being strongly resolvable implies it is weakly resolvable. Consequently, given the basis $\left\{ y_{p_j}-y_{q_j} \right\}_{j=1}^{\tilde{s}}$ where $p_j, q_j \in \mathcal{CR}$ of $\tilde{S}$, from Lemma \ref{lemma00} it follows that, for every $i \in \mathcal{R}_I$, there are constants $c_j, j =1, \ldots, \tilde{s}$, such that
\begin{equation}
\label{919}
\mathbf{x}^{y_{\rho(i)}} = \left[ \tilde{K}_{\rho(i),\rho(i)_K}(\mathbf{x}) \right] \; \mathbf{x}^{y_{\rho(i)_K}}
\end{equation}
where $\tilde{K}_{\rho(i),\rho(i)_K}(\mathbf{x})$ is given by
\[\tilde{K}_{\rho(i),\rho(i)_K}(\mathbf{x}) = \prod_{j=1}^{\tilde{s}} \left( \frac{\mathbf{x}^{y_{p_j}}}{\mathbf{x}^{y_{q_j}}} \right)^{c_j}.\]

Now define \emph{state dependent} rate functions
 \begin{equation}
\label{999}
\tilde{k}_i(\mathbf{x}) = \left\{ \begin{array}{ll} k_i, \; \; \; \; \; & \mbox{for } i \in \mathcal{R} \setminus \mathcal{R}_I \\ \left( \tilde{K}_{\rho(i),\rho(i)_K}(\mathbf{x}) \right) k_i \; \; \; \; \; & \mbox{for } i \in \mathcal{R}_I. \end{array}\right.
\end{equation}
These rate functions define a state dependent rate constant matrix $\tilde{I}_k(\mathbf{x})$ with entries $[\tilde{I}_k(\mathbf{x})]_{ij} = \tilde{k}_i(\mathbf{x})$ if $h_2(\rho(i))=j$ and $[\tilde{I}_k(\mathbf{x})]_{ij} = 0$ otherwise. We may now use the mass action vector of the translation $\tilde{\mathcal{N}}$, $\Psi_K(\mathbf{x})$, which has entries $[\Psi_K(\mathbf{x})]_{h_2(j)} = \Psi_j(\mathbf{x})$ for $j \in \mathcal{CR}_K$. Define the vector $\tilde{R}_K(\mathbf{x}):= \tilde{I}_k(\mathbf{x}) \; \Psi_K(\mathbf{x})$. It follows by (\ref{919}) and (\ref{999}) that the entries of $\tilde{R}_K(\mathbf{x})$ for $i \in \mathcal{R} \setminus \mathcal{R}_I$ are given by
\[[\tilde{R}_K(\mathbf{x})]_i = k_i \mathbf{x}^{y_{h_2^{-1}(\tilde{\rho}(i))}} = k_i \mathbf{x}^{y_{h_2^{-1}(h_2(\rho(i)))}} = k_i \mathbf{x}^{y_{\rho(i)_K}} = k_i \mathbf{x}^{y_{\rho(i)}} = R_i(\mathbf{x})\]
because $i \in \mathcal{R} \setminus \mathcal{R}_I$ implies $\rho(i)_K= \rho(i)$. For $i \in \mathcal{R}_I$ we have
\[\begin{split} [\tilde{R}_K(\mathbf{x})]_i & = \left( \tilde{K}_{\rho(i),\rho(i)_K}(\mathbf{x}) \right) k_i \mathbf{x}^{y_{h_2^{-1}(\tilde{\rho}(i))}} \\ & = \left( \tilde{K}_{\rho(i),\rho(i)_K}(\mathbf{x}) \right)k_i \mathbf{x}^{y_{\rho(i)_K}} = k_i \mathbf{x}^{y_{\rho(i)}} = R_i(\mathbf{x}) \end{split}\]
by (\ref{919}). Consequently, by (\ref{922}) we have that
\begin{equation}
\label{838}
\frac{d\mathbf{x}}{dt} = \tilde{Y} \; \tilde{I}_a \; I_k \; \Psi(\mathbf{x}) = \tilde{Y} \; \tilde{I}_a \; \tilde{I}_k(\mathbf{x}) \; \Psi_K(\mathbf{x}) = \tilde{Y} \; \tilde{A}_k(\mathbf{x}) \; \Psi_K(\mathbf{x})
\end{equation}
\noindent where $\tilde{A}_k(\mathbf{x}) := \tilde{I}_a \; \tilde{I}_k(\mathbf{x}) \in \mathbb{R}_{>0}^{\tilde{n} \times \tilde{n}}$ is a state dependent kinetic matrix with the same structure as the translation $\tilde{\mathcal{N}}$ and rate functions given by (\ref{999}).


Let $\tilde{G}(\tilde{V},\tilde{E})(\mathbf{x})$ denote the weighted directly graph of $\tilde{\mathcal{N}}$ with state dependent edge weights given by (\ref{999}). In order to remove the state dependence in $\tilde{A}_k(\mathbf{x})$ and $\tilde{G}(\tilde{V},\tilde{E})(\mathbf{x})$, we consider the system at \emph{steady state}. Since $\tilde{\delta} = 0$ for the translation $\tilde{\mathcal{N}}$, it follows that
\begin{equation}
\label{3822}
\tilde{Y} \; \tilde{A}_k(\mathbf{x}) \; \Psi_K(\mathbf{x}) = \mathbf{0} \; \; \; \Longleftrightarrow \; \; \; \tilde{A}_k(\mathbf{x}) \; \Psi_K(\mathbf{x}) = \mathbf{0}.
\end{equation}
Now let $\tilde{\Lambda}_i$, $i=1, \ldots, \tilde{\ell},$ denote the supports of the $\tilde{\ell}$ linkages of $\tilde{\mathcal{N}}$ and $\tilde{K}_j(\mathbf{x})$, $j=1, \ldots, \tilde{n}$, denote the state dependent tree constants (\ref{treeconstant}) of $\tilde{G}(\tilde{V},\tilde{E})(\mathbf{x})$. Since $\tilde{\mathcal{N}}$ is a strong translation, we have that $\tilde{G}(\tilde{V},\tilde{E})(\mathbf{x})$ is weakly reversible. By Theorem \ref{Akernel} we therefore have that
\[\mbox{ker} (\tilde{A}_k(\mathbf{x})) = \mbox{span} \left\{ \tilde{\mathbf{K}}_1(\mathbf{x}), \tilde{\mathbf{K}}_2(\mathbf{x}), \ldots, \tilde{\mathbf{K}}_{\tilde{\ell}}(\mathbf{x}) \right\}\]
where $\tilde{\mathbf{K}}_j(\mathbf{x}) = ([\tilde{K}_j(\mathbf{x})]_1,[\tilde{K}_j(\mathbf{x})]_2,\ldots,[\tilde{K}_j(\mathbf{x})]_{\tilde{n}})$ has entries
\[ [\tilde{K}_j(\mathbf{x})]_i = \left\{ \begin{array}{ll} \tilde{K}_i(\mathbf{x}), \; \; \; \; \; \; & \mbox{if } i \in \Lambda_j \\ 0 & \mbox{otherwise.} \end{array} \right.\]
It follows that, for every $i, j \in \mathcal{CR}_K$ such that $h_2(i), h_2(j) \in \tilde{\mathcal{L}}_k$ for some $k=1, \ldots, \tilde{\ell}$, we have
\begin{equation}
\label{222}
\frac{\mathbf{x}^{y_i}}{\tilde{K}_{h_2(i)}(\mathbf{x})} = \frac{\mathbf{x}^{y_j}}{\tilde{K}_{h_2(j)}(\mathbf{x})} \; \; \; \Longleftrightarrow \; \; \; \frac{\mathbf{x}^{y_i}}{\mathbf{x}^{y_j}} = \frac{\tilde{K}_{h_2(i)}(\mathbf{x})}{\tilde{K}_{h_2(j)}(\mathbf{x})}.
\end{equation}
Since $\tilde{\mathcal{N}}$ is weakly reversible, it follows that $\tilde{S}$ has a basis of the form $\left\{ y_{p_j}-y_{q_j} \right\}_{j=1}^{\tilde{s}}$ where $p_j, q_j \in \mathcal{CR}_K$. Since (\ref{222}) holds for all $i,j \in \mathcal{CR}_K$ such that $h_2(i),h_2(j) \in \tilde{\mathcal{L}}_k$ it holds for every pair $p_j,q_j$ corresponding to a basis element of $\tilde{S}$. That is to say, we have
\[\frac{\mathbf{x}^{y_{p_j}}}{\mathbf{x}^{y_{q_j}}} = \frac{\tilde{K}_{h_2(p_j)}(\mathbf{x})}{\tilde{K}_{h_2(q_j)}(\mathbf{x})}.\]
Now let $c_j$, $j=1, \ldots, \tilde{s}$, denote the coordinates of $y_{\rho(i)}-y_{\rho(i)_K}$ for some $i \in \mathcal{R}_K$ with respect to the basis $\left\{ y_{p_j} - y_{q_j} \right\}_{j=1}^{\tilde{s}}$ of $\tilde{S}$. Then we have
\begin{equation}
\label{333}
\tilde{K}_{\rho(i),\rho(i)_K}(\mathbf{x}) = \left( \prod_{i=1}^{\tilde{s}} \frac{\mathbf{x}^{y_{p_i}}}{\mathbf{x}^{y_{q_i}}} \right)^{c_i} = \left( \prod_{i=1}^{\tilde{s}} \frac{\tilde{K}_{h_2(p_i)}(\mathbf{x})}{\tilde{K}_{h_2(q_i)}(\mathbf{x})} \right)^{c_i}.
\end{equation}

Clearly, the product on the far right of (\ref{333}) is state dependent. Since $\tilde{\mathcal{N}}$ is strongly resolvable, however, we know that, for every $i \in \mathcal{R}_I$,
\begin{equation}
\label{33334}
\tilde{K}_{\rho(i),\rho(i)_K} = \prod_{j=1}^{\tilde{s}} \left( \frac{\tilde{K}_{h_2(p_j)}}{\tilde{K}_{h_2(q_j)}} \right)^{c_j}
\end{equation}
does not depend on any $\tilde{k}_j$ corresponding to $j \in \mathcal{R}_I$ where the tree constants $\tilde{K}_j$ are determined with respect to the semi-proper reaction graph $\tilde{G}(\tilde{V},\tilde{E})$ of $\tilde{\mathcal{N}}$. Since $\tilde{G}(\tilde{V},\tilde{E})$ and $\tilde{G}(\tilde{V},\tilde{E})(\mathbf{x})$ both correspond structurally to $\tilde{\mathcal{N}}$, the tree constants have the same dependency on edge weights. It follows that
\begin{equation}
\label{33333}
\left( \prod_{j=1}^{\tilde{s}} \frac{\tilde{K}_{h_2(p_j)}(\mathbf{x})}{\tilde{K}_{h_2(q_j)}(\mathbf{x})} \right)^{c_j}
\end{equation}
does not depend on any on any $\tilde{k}_j(\mathbf{x})$ corresponding to $j \in \mathcal{R}_I$. However, the only state dependent rate functions in (\ref{999}) corresponded to $j \in \mathcal{R}_I$. It follows that (\ref{33333}) does not depend on any state dependent rate constant in (\ref{999}). It follows that (\ref{333}), and consequently (\ref{999}), are independent of the state $\mathbf{x} \in \mathbb{R}_{>0}^m$.

Notice furthermore that (\ref{33334}) and (\ref{33333}) only depend on $k_i$ corresponding to $i \in \mathcal{R} \setminus \mathcal{R}_I$. Since the edge weights of $\tilde{G}(\tilde{V},\tilde{E})(\mathbf{x})$ and $\tilde{G}(\tilde{V},\tilde{E})$ coincide for $k_i$ corresponding to $i \in \mathcal{R} \setminus \mathcal{R}_I$, it follows from (\ref{333}), (\ref{33334}) and (\ref{33333}) that we have $\tilde{K}_{\rho(i),\rho(i)_K}(\mathbf{x}) = \tilde{K}_{\rho(i),\rho(i)_K}$. It then follows from (\ref{999}) that, at steady state, we have
\[\tilde{k}_i(\mathbf{x}) = \left\{ \begin{array}{ll} k_i, \; \; \; \; \; & \mbox{for } i \in \mathcal{R} \setminus \mathcal{R}_I \\ \left( \tilde{K}_{\rho(i),\rho(i)_K} \right) k_i \; \; \; \; \; & \mbox{for } i \in \mathcal{R}_I. \end{array}\right.\]
This corresponds to the choice of rate constants for the improperly translated mass action system $\tilde{\mathcal{M}} = (\mathcal{S},\tilde{\mathcal{C}},\mathcal{CR}_K,\tilde{\mathcal{R}},\tilde{k})$ defined by Definition \ref{improperkinetic}. Consequently, from (\ref{838}) we have
\begin{equation}
\label{1000}
\tilde{Y} \; \tilde{I}_a \; \tilde{I}_k(\mathbf{x}) \; \Psi_K(\mathbf{x}) = \tilde{Y} \; \tilde{I}_a \; \tilde{I}_k \; \Psi_K(\mathbf{x})
\end{equation}
so that the system (\ref{de}) governing $\mathcal{M}$ and the system (\ref{gde}) governing $\tilde{\mathcal{M}}$ defined by Definition \ref{improperkinetic} coincide at steady state, and we are done.
\end{proof}

\subsection{Connection with Toric Steady States}
\label{section4}

In order to relate translated mass action systems, complex balanced generalized mass action systems, and toric steady states, we need to first understand how the kinetic spaces associated with $\mathcal{N}$ and its translation $\tilde{\mathcal{N}}$ are related.

\begin{lemma}
\label{lemma232}
Consider a chemical reaction network $\mathcal{N} = (\mathcal{S},\mathcal{C},\mathcal{R})$ and a strong translation $\tilde{\mathcal{N}} = (\mathcal{S},\tilde{\mathcal{C}},\mathcal{CR}_K,\tilde{\mathcal{R}})$. Then the stoichiometric subspaces $S$ of $\mathcal{N}$ and $\tilde{\mathcal{N}}$ coincide and the kinetic-order subspace $\tilde{S}$ of $\tilde{\mathcal{N}}$ is given by
\begin{equation}
\label{tildeS}
\tilde{S} = \mbox{span} \left\{ \bigcup_{k=1}^{\tilde{\ell}} \left\{ y_{p} - y_{q} \; | \; p,q \in \mathcal{CR}_K, \; h_2(p), h_2(q) \in \tilde{\mathcal{L}}_k \right\} \right\}.
\end{equation}
\end{lemma}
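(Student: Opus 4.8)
The plan is to treat the two assertions separately, disposing of the equality of stoichiometric subspaces immediately and then proving the kinetic-order formula by a double inclusion. Throughout I would reindex the reactions of $\tilde{\mathcal{N}}$ so that $h_1$ is the identity, as permitted by the third remark following Definition \ref{translation}. For the stoichiometric subspaces I would invoke property $1.$ of Definition \ref{translation} directly: the reaction vector of the $i^{th}$ reaction of $\tilde{\mathcal{N}}$ equals $y_{\rho'(i)}-y_{\rho(i)}$, the $i^{th}$ reaction vector of $\mathcal{N}$. Since $h_1$ is a bijection, the generating sets $\{y_{\rho'(i)}-y_{\rho(i)}\}$ and $\{\tilde{y}_{\tilde{\rho}'(i)}-\tilde{y}_{\tilde{\rho}(i)}\}$ coincide, so the two spans $S$ agree; note this uses only that $\tilde{\mathcal{N}}$ is a translation, not that it is strong.

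The substance lies in the kinetic-order formula, and I would begin by recording the identification supplied by strong translation. Because $\tilde{\mathcal{N}}$ is weakly reversible, every complex of $\tilde{\mathcal{N}}$ is a reactant complex, hence $\tilde{\mathcal{CR}} = \tilde{\mathcal{C}}$ and every complex carries a well-defined kinetic complex. By property $3.$ of Definition \ref{translation}, $h_2$ restricts to a bijection $\mathcal{CR}_K \to \tilde{\mathcal{CR}}$, and I would name its inverse $\kappa$, so that $\kappa(j) \in \mathcal{CR}_K$ is the kinetic complex of $j \in \tilde{\mathcal{CR}}$, with $\kappa(h_2(p))=p$ for $p \in \mathcal{CR}_K$ and $h_2(\kappa(j))=j$ for $j \in \tilde{\mathcal{CR}}$. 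Since the kinetic complex vector attached to $j$ is $\tilde{y}_j = y_{\kappa(j)}$, the kinetic-order subspace may be rewritten as
\[\tilde{S} = \mbox{span}\left\{ y_{\kappa(\tilde{\rho}'(i))} - y_{\kappa(\tilde{\rho}(i))} \; | \; i = 1,\ldots,r \right\}.\]

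With this formulation the two inclusions are short. For ``$\subseteq$'', each generator $y_{\kappa(\tilde{\rho}'(i))}-y_{\kappa(\tilde{\rho}(i))}$ is of the form $y_q - y_p$ with $p=\kappa(\tilde{\rho}(i))$ and $q=\kappa(\tilde{\rho}'(i))$ in $\mathcal{CR}_K$; since $h_2(p)=\tilde{\rho}(i)$ and $h_2(q)=\tilde{\rho}'(i)$ are joined by the $i^{th}$ reaction, they lie in a common linkage class $\tilde{\mathcal{L}}_k$, so the generator belongs to the right-hand side. For ``$\supseteq$'', given $p,q \in \mathcal{CR}_K$ with $h_2(p),h_2(q)$ in the same linkage class, I would select an undirected path of reactions $h_2(p)=a_0,a_1,\ldots,a_L=h_2(q)$ in $\tilde{\mathcal{N}}$ and telescope: each edge contributes $\pm\bigl(y_{\kappa(a_{t+1})}-y_{\kappa(a_t)}\bigr) \in \tilde{S}$, and summing gives $y_{\kappa(a_L)}-y_{\kappa(a_0)} = y_q - y_p$, using $\kappa \circ h_2 = \mbox{id}$ on $\mathcal{CR}_K$ to identify the endpoints.

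The only place where care is genuinely needed is the telescoping step, and it is exactly here that the strong-translation hypothesis is indispensable: each intermediate complex $a_t$ on the path must be a reactant complex for $\kappa(a_t)$ to be defined, which weak reversibility guarantees by forcing $\tilde{\mathcal{CR}} = \tilde{\mathcal{C}}$. I would also note that the orientation of each edge is immaterial, since $\tilde{S}$ is a subspace and hence closed under negation, so that both forward and backward reactions along the path contribute admissible vectors. Assembling the two inclusions then yields the stated span, completing the proof.
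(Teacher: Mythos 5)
Your proposal is correct and follows essentially the same route as the paper: property 1 of Definition \ref{translation} gives the equality of stoichiometric subspaces, and weak reversibility is used to guarantee that every complex of $\tilde{\mathcal{N}}$ is a reactant complex with a kinetic complex drawn bijectively from $\mathcal{CR}_K$, after which the formula for $\tilde{S}$ reduces to the classical fact that the span of a network's reaction vectors equals the span of differences of complexes lying in a common linkage class. The only difference is that the paper cites this last fact (to Feinberg) while you prove it explicitly by telescoping along undirected paths, which is the standard argument and correctly isolates where the strong-translation hypothesis is needed.
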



\begin{proof}
Let $\mathcal{N} = (\mathcal{S},\mathcal{C},\mathcal{R})$ be a chemical reaction network and  $\tilde{\mathcal{N}} = (\mathcal{S},\tilde{\mathcal{C}},\mathcal{CR}_K,\tilde{\mathcal{R}})$ be a strong translation of $\mathcal{N}$. By property $2.$ of Definition \ref{translation}, $\mathcal{N}$ and $\tilde{\mathcal{N}}$ have the same reaction vectors and therefore have the same stoichiometric subspace $S$. This proves the first claim.

To the second claim, we recall that a strong translation $\tilde{\mathcal{N}}$ is weakly reversible and therefore only contains reactant complexes. These reactant complexes are assigned kinetic complexes from the set $\mathcal{CR}$ according to the relation $h_2$. It is well known that the span of the reaction vectors of a chemical reaction network is the same as the span of the stoichiometric differences of complexes on the same connected component (for example, see pg. 189 of \cite{F1}). Since the kinetic complexes are drawn bijectively from $\mathcal{CR}_K$ by $h_2$, this completes the proof.
\end{proof}

\begin{remark}
This result says that the stoichiometric subspaces of a network and its translation are the same and the kinetic-order subspace of the translation is given by the span of the stoichiometric differences of the kinetically relevant complexes which map through $h_2$ to the same connected components of $\tilde{\mathcal{N}}$.
\end{remark}


We can now make the following connection between translated mass action systems, complex balanced generalized mass action systems, and systems with toric steady states. The following is the main result of the paper.

\begin{theorem}
\label{theorem01}
Suppose $\tilde{\mathcal{N}} = (\mathcal{S},\tilde{\mathcal{C}},\mathcal{CR}_K,\tilde{\mathcal{R}})$ is a translation of a chemical reaction network $\mathcal{N} = (\mathcal{S},\mathcal{C},\mathcal{R})$ which is either strong and proper, or strongly resolvably improper. Let $\tilde{\mathcal{M}} = (\mathcal{S},\tilde{\mathcal{C}},\mathcal{CR}_K,\tilde{\mathcal{R}},\tilde{k})$ denote a properly or improperly translated mass action system corresponding to $\mathcal{M} = (\mathcal{S},\mathcal{C},\mathcal{R},k)$ (Definition \ref{properkinetic} or Definition \ref{improperkinetic}). Suppose furthermore that $\tilde{\mathcal{N}}$ satisfies $\tilde{\delta} = \tilde{\delta}_K = 0$. Then:
\begin{enumerate}
\item
The mass action system $\mathcal{M}$ has toric steady states for all rate constant vectors $k \in \mathbb{R}_{>0}^r$.
\item
The toric steady state ideal of $\mathcal{M}$ is generated by the binomials
\[\tilde{K}_{h_2(i)} \mathbf{x}^{y_{j}} - \tilde{K}_{h_2(j)} \mathbf{x}^{y_{i}}\]
for $i,j \in \mathcal{CR}_K$ such that $h_2(i), h_2(j) \in \tilde{\mathcal{L}}_k$, $k=1, \ldots, \tilde{\ell},$ and $\tilde{K}_{h_2(i)}$, $i =1, \ldots, n$, are the tree constants (\ref{treeconstant}) corresponding to the translated reaction graph of $\tilde{\mathcal{N}}$.
\item
The toric steady states of $\mathcal{M}$ correspond to the complex balanced steady states of $\tilde{\mathcal{M}}$ and can be parametrized by
\[E = \left\{ \mathbf{x} \in \mathbb{R}_{>0}^m \; | \; \ln(\mathbf{x}) - \ln(\mathbf{x}^*) \in \tilde{S}^{\perp} \right\}\]
where
\[\tilde{S} = \mbox{span} \left\{ \bigcup_{k=1}^{\tilde{\ell}} \left\{ y_{p} - y_{q} \; | \; p,q \in \mathcal{CR}, \; h_2(p), h_2(q) \in \tilde{\mathcal{L}}_k\right\} \right\}.\]

\item
If $\sigma(S) = \sigma(\tilde{S})$ and $(+,\cdots,+) \in \sigma(S^{\perp})$ then there is exactly one toric steady state within each stoichiometric compatibility class $\mathsf{C}_{\mathbf{x}_0} = (\mathbf{x}_0 + S) \cap \mathbb{R}^m$ of $\mathcal{M}$.
\item
If $\sigma(S) \cap \sigma(\tilde{S}^{\perp}) \not= \left\{ 0 \right\}$ then there exists a rate constant vector $k \in \mathbb{R}_{>0}^r$ such that $\mathcal{M}$ has more than one toric steady state in some stoichiometric compatibility class of $\mathcal{M}$.
\end{enumerate}
\end{theorem}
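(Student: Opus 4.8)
The plan is to transfer everything to the translated generalized mass action system $\tilde{\mathcal M}$ and read off the conclusions from the theory of generalized complex balancing. First I would use Lemma \ref{lemma231} (the strong and proper case) or Lemma \ref{lemma331} (the strongly resolvably improper case) to identify the positive steady states of $\mathcal M$ with those of $\tilde{\mathcal M}$. Because $\tilde\delta = 0$, the same argument as in (\ref{3822}) --- namely that $\tilde A_k\tilde\Psi(\mathbf x)\in\mbox{Im}(\tilde I_a)$ while $\ker(\tilde Y)\cap\mbox{Im}(\tilde I_a) = \{0\}$ --- gives $\tilde Y\tilde A_k\tilde\Psi(\mathbf x) = \mathbf 0 \Longleftrightarrow \tilde A_k\tilde\Psi(\mathbf x) = \mathbf 0$, so every positive steady state of $\tilde{\mathcal M}$ is a generalized complex balanced steady state (Definition \ref{generalizedcb}). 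Since $\tilde{\mathcal N}$ is a strong translation (hence weakly reversible) with $\tilde\delta_K = 0$, Theorem \ref{deficiencyzerotheorem} guarantees at least one such steady state $\mathbf x^*$ for every $k\in\mathbb R_{>0}^r$, so the complex balanced locus is nonempty.

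For parts 1 and 2 I would invoke the tree-constant description of $\ker(\tilde A_k)$ from Theorem \ref{Akernel}: weak reversibility makes $\ker(\tilde A_k)$ the span of vectors $\tilde{\mathbf K}_1, \ldots, \tilde{\mathbf K}_{\tilde\ell}$, each supported on a single linkage class $\tilde{\mathcal L}_k$ and carrying the tree constants $\tilde K_{h_2(i)}$ as entries. Since $\tilde\delta = 0$ forces $\ker(\tilde\Sigma) = \ker(\tilde A_k)$, where $\tilde\Sigma := \tilde Y\tilde I_a\tilde I_k$, these linkage classes furnish exactly the partition and supported basis demanded by Theorem \ref{theorem012}; applying that theorem to the generalized system $\tilde{\mathcal M}$ shows its steady state ideal is generated by the binomials $\tilde K_{h_2(i)}\mathbf x^{y_j} - \tilde K_{h_2(j)}\mathbf x^{y_i}$ for $i,j\in\mathcal{CR}_K$ with $h_2(i),h_2(j)$ in a common linkage class. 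By Lemma \ref{lemma231} this is literally the steady state ideal of $\mathcal M$ in the proper case, and by Lemma \ref{lemma331} it cuts out the same positive variety in the improper case, giving parts 1 and 2. Part 3 is then immediate: every steady state being generalized complex balanced, the parametrization (\ref{gcbequil}) yields $E = \{\mathbf x : \ln(\mathbf x) - \ln(\mathbf x^*)\in\tilde S^\perp\}$, and Lemma \ref{lemma232} identifies $\tilde S$ with the stated span.

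Parts 4 and 5 are purely geometric statements about how $E$ meets a compatibility class $(\mathbf x_0 + S)\cap\mathbb R_{>0}^m$. For uniqueness, suppose $\mathbf x_1, \mathbf x_2\in E$ lie in the same class; then $\mathbf u := \mathbf x_1 - \mathbf x_2\in S$ and $\mathbf v := \ln(\mathbf x_1) - \ln(\mathbf x_2)\in\tilde S^\perp$ share the same sign vector because $t\mapsto\ln t$ is increasing, so $\sigma(\mathbf u)\in\sigma(S)\cap\sigma(\tilde S^\perp)$. The hypothesis $\sigma(S) = \sigma(\tilde S)$ rewrites this intersection as $\sigma(\tilde S)\cap\sigma(\tilde S^\perp)$, which is $\{0\}$ for any subspace (a common nonzero sign vector of $\mathbf a\in\tilde S$ and $\mathbf b\in\tilde S^\perp$ would force $\langle\mathbf a,\mathbf b\rangle > 0$, contradicting $\mathbf a\perp\mathbf b$); hence $\mathbf u = \mathbf 0$ and $\mathbf x_1 = \mathbf x_2$. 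Existence then amounts to showing $E$ meets the class; here the strictly positive conserved quantity supplied by $(+,\ldots,+)\in\sigma(S^\perp)$ makes the class compact, and together with the injectivity just established this yields an intersection point by the Birch-type argument discussed below. For part 5, a nonzero $\tau\in\sigma(S)\cap\sigma(\tilde S^\perp)$ provides $\mathbf u\in S$ and $\mathbf w\in\tilde S^\perp$ with $\sigma(\mathbf u) = \sigma(\mathbf w) = \tau$; defining $b_i = u_i/(e^{w_i}-1)$ on $\mbox{supp}(\tau)$ and $b_i = 1$ elsewhere gives a positive vector $\mathbf b$, and $\mathbf a$ with $a_i = e^{w_i}b_i$ satisfies $\mathbf a - \mathbf b = \mathbf u\in S$ and $\ln(\mathbf a) - \ln(\mathbf b) = \mathbf w\in\tilde S^\perp$. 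Choosing $k$ so that $\mathbf b$ is complex balanced places the distinct points $\mathbf a, \mathbf b$ in $E$ and in one compatibility class, producing the asserted multiplicity.

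I expect the existence half of part 4 to be the main obstacle: uniqueness and the binomial structure are essentially bookkeeping built on the preceding lemmas, but converting nonemptiness of $E$ and compactness of the compatibility classes into an actual intersection point requires a Birch-type argument --- showing that the injective map $\tilde S^\perp\to S^\perp$ sending $\mathbf w$ to the $S^\perp$-projection of the positive vector with logarithm $\ln(\mathbf x^*) + \mathbf w$ is proper, hence surjective, using $\dim S = \dim\tilde S$ (which itself follows from $\sigma(S) = \sigma(\tilde S)$). A secondary point requiring care is part 5: realizing a prescribed $\mathbf b$ as a complex balanced steady state is immediate for proper translations, where $\tilde k = k$, but in the improper case one must check that the required rate constants are consistent with the kinetic adjustment factors (\ref{strongkineticadjustmentfactor}) defining $\tilde k$.
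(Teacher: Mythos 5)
Your treatment of claims 1--3 is essentially the paper's own argument: identify the steady states of $\mathcal{M}$ with those of $\tilde{\mathcal{M}}$ via Lemma \ref{lemma231} or Lemma \ref{lemma331}, use $\tilde{\delta}_K=0$ and weak reversibility to get a generalized complex balanced steady state, use $\tilde{\delta}=0$ to promote every steady state to a complex balanced one, and read the binomials off the tree-constant description of $\mbox{ker}(\tilde{A}_k)$ in Theorem \ref{Akernel}. The only cosmetic difference is that you route the binomial generation through Theorem \ref{theorem012} applied to $\tilde{\mathcal{M}}$ (the linkage classes supplying the partition and supported basis of $\mbox{ker}(\tilde{\Sigma})=\mbox{ker}(\tilde{A}_k)$), whereas the paper derives the relations $\mathbf{x}^{y_i}/\tilde{K}_{h_2(i)}=\mathbf{x}^{y_j}/\tilde{K}_{h_2(j)}$ directly from $\Psi_K(\mathbf{x}^*)\in\mbox{ker}(\tilde{A}_k)$; these are interchangeable. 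Where you genuinely diverge is claims 4 and 5: the paper disposes of these in one line by citing Proposition 3.2 and Theorem 3.10 of \cite{M-R}, while you reprove them. Your uniqueness argument (a common sign vector of $S$ and $\tilde{S}^\perp$ forces, under $\sigma(S)=\sigma(\tilde{S})$, a nonzero element of $\sigma(\tilde{S})\cap\sigma(\tilde{S}^\perp)$, which is impossible for orthogonal subspaces) and your claim-5 construction ($b_i=u_i/(e^{w_i}-1)$ on the support, $a_i=e^{w_i}b_i$) are the correct standard arguments underlying the cited results, and your observation that in the improper case one must check the prescribed complex balancing rate constants are realizable through the adjustment factors (\ref{strongkineticadjustmentfactor}) is a legitimate subtlety that the paper glosses over. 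The one place your write-up is not self-contained is the existence half of claim 4: you correctly identify that it reduces to properness/surjectivity of a Birch-type map, but you only sketch this, and it is exactly the content that the citation to \cite{M-R} supplies in the paper. So: same skeleton for 1--3, a more self-contained (but incomplete at one acknowledged point) unpacking of the outsourced sign-vector results for 4--5.
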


\begin{proof}[Proof (1-3):]
Let $\mathcal{N} = (\mathcal{S},\mathcal{C},\mathcal{R})$ be a chemical reaction network and $\tilde{\mathcal{N}} = (\mathcal{S},\tilde{\mathcal{C}},\mathcal{CR},\tilde{\mathcal{R}})$ be a translation of $\mathcal{N}$ which is either strong and proper, or strongly resolvable improper. Suppose $\mathcal{M} = (\mathcal{S},\mathcal{C},\mathcal{R},k)$ is a mass action system corresponding to $\mathcal{N}$. We define the translated mass action system $\mathcal{\tilde{M}} = (\mathcal{S},\tilde{\mathcal{C}},\mathcal{CR}_K,\tilde{\mathcal{R}},\tilde{k})$ according to Definition \ref{properkinetic} if $\tilde{\mathcal{N}}$ is a proper translation of $\mathcal{N}$, and by Definition \ref{improperkinetic} if $\tilde{\mathcal{N}}$ is a strongly resolvable improper translation of $\mathcal{N}$.

From either Lemma \ref{lemma231} and Lemma \ref{lemma331} we have that the steady state set of $\tilde{\mathcal{M}}$ corresponds to the steady state set of $\mathcal{M}$. Correspondingly, by either (\ref{382}) or (\ref{1000}), if we take $h_1$ to be the identity, we have that
\[\frac{d\mathbf{x}}{dt} = \tilde{Y} \; \tilde{I}_a \; \tilde{I}_k \; \Psi_K(\mathbf{x}) = \tilde{Y} \; \tilde{A}_k \; \Psi_K(\mathbf{x})\]
where the rate constants of $\tilde{I}_k$ and $\tilde{A}_k:= \tilde{I}_a \; \tilde{I}_k$ are defined by $\tilde{k}_{h_1(i)} = k_i$ is $\tilde{\mathcal{N}}$ is proper and (\ref{111}) if $\tilde{\mathcal{N}}$ is strongly resolvably improper, and $\Psi_K(\mathbf{x})$ has entries $[\Psi_K(\mathbf{x})]_{h_2(j)} = \Psi_j(\mathbf{x})$ for $j \in \mathcal{CR}_K$. (Notice that, for proper translations, $\mathcal{CR}_K = \mathcal{CR}$ and $h_2$ is bijective so that this coincides with the definition of $\Psi_K(\mathbf{x})$ given in the proof of Lemma \ref{lemma231}.)

Since $\tilde{\delta}_K = 0$, we may conclude by Proposition 2.20 of \cite{M-R} that the translated mass action system $\tilde{\mathcal{M}}$ has a complex balanced steady state. That is to say, there is a point $\mathbf{x}^*$ which satisfies
\begin{equation}
\label{kernel2}
\Psi_{K}(\mathbf{x}^*) \in \mbox{ ker}(\tilde{A}_k).
\end{equation}
Furthermore, since $\tilde{\delta} = $ dim(ker$(\tilde{Y}) \cap$ Im$(\tilde{I}_a))=0$, we have from (\ref{3822}) that all steady states are complex balanced steady states. It follows from Proposition 2.21 of \cite{M-R} the set of such steady states may be parametrized by 
\[E = \left\{ \mathbf{x} \in \mathbb{R}_{>0}^m \; | \; \ln(\mathbf{x}) - \ln(\mathbf{x}^*) \in \tilde{S} \right\}\]
where
\[\tilde{S} = \mbox{span} \left\{ \bigcup_{k=1}^{\tilde{\ell}} \left\{ y_{p} - y_{q} \; | \; p,q \in \mathcal{CR}, \; h_2(p), h_2(q) \in \tilde{\mathcal{L}}_k\right\} \right\}.\]
by Lemma \ref{lemma232}. This is sufficient to prove claim $3.$

Now consider claims $1.$ and $2$. Since $\tilde{\mathcal{N}}$ is a strong translation it is weakly reversible. Consequently, by Theorem \ref{Akernel}, we may conclude that
\begin{equation}
\label{543}
\mbox{ker} (\tilde{A}_k) = \mbox{span} \left\{ \tilde{\mathbf{K}}_1, \tilde{\mathbf{K}}_2, \ldots, \tilde{\mathbf{K}}_{\tilde{\ell}} \right\}
\end{equation}
where $\tilde{\mathbf{K}}_j = ([\tilde{K}_j]_1,[\tilde{K}_j]_2,\ldots,[\tilde{K}_j]_{\tilde{n}})$ has entries
\begin{equation}
\label{544}
 [\tilde{K}_j]_i = \left\{ \begin{array}{ll} \tilde{K}_i, \; \; \; \; \; \; & \mbox{if } i \in \Lambda_j \\ 0 & \mbox{otherwise} \end{array} \right.
\end{equation}
where $\Lambda_j$ denotes the support of the $j^{th}$ linkage class $\tilde{\mathcal{L}}_j$ in $\tilde{\mathcal{N}}$, and $\tilde{K}_i$ is the tree constant of the $i^{th}$ complex $\tilde{\mathcal{C}}_i$ in the translated reaction graph $\tilde{G}(\tilde{V},\tilde{E})$.

It follows from (\ref{kernel2}), (\ref{543}), and (\ref{544}) that, for every $i, j \in \mathcal{CR}_K$ such that $h_2(i), h_2(j) \in \tilde{\mathcal{L}}_k$ for some $k=1, \ldots, \tilde{\ell}$, the steady states $\mathbf{x}^* \in \mathbb{R}_{>0}^m$ satisfy
\[\frac{(\mathbf{x}^*)^{y_i}}{\tilde{K}_{h_2(i)}} = \frac{(\mathbf{x}^*)^{y_j}}{\tilde{K}_{h_2(j)}} \; \; \; \Longleftrightarrow \; \; \; \tilde{K}_{h_2(j)} (\mathbf{x}^*)^{y_i} - \tilde{K}_{h_2(i)} (\mathbf{x}^*)^{y_j} = 0.\]
Since this set corresponds to the steady states of $\mathcal{M}$ by either Lemma \ref{lemma231} or Lemma \ref{lemma331}, we have shown that $\mathcal{M}$ has toric steady states generated by binomials of the form required of claim $2$. Since the choice of rate constants in the definition of $\mathcal{M}$ was arbitrary, claims $1.$ and $2.$ follow.\\ \\
\emph{Proof \;(4-5):} This follows directly from claims $1.$ through $3.$ of Theorem \ref{theorem01}, Lemma \ref{lemma232}, and Proposition 3.2 and Theorem 3.10 of \cite{M-R}.
\end{proof}

\section{Techniques and Examples}
\label{examplesection}


In general, when applying Theorem \ref{theorem01} we do not have a translation $\tilde{\mathcal{N}}$ of $\mathcal{N}$ given to us; rather, we must \emph{find} it. In this section, a simple heuristic method for generating translated chemical reaction networks will be presented. We follow this discussion with three examples which are known to contain toric steady states \cite{M-D-S-C}.


\subsection{Translation Algorithm}
\label{techniquesection}



We make several observations about the process of network translations, in particular as it relates the assumptions necessary to apply Theorem \ref{theorem01}. We firstly require that the translation $\tilde{\mathcal{N}}$ is strong and that the structural deficiency is zero (i.e. $\tilde{\delta} =0$). It follows from the discussion in Section \ref{generatorsection} that the translation $\tilde{\mathcal{N}}$ must not have any stoichiometric generators of ker$(\Gamma) \cap \mathbb{R}_{\geq 0}^r$.

We notice, however, that Definition \ref{translation} implies any translation $\tilde{\mathcal{N}}$ shares the same reaction vectors as $\mathcal{N}$ and, consequently, $\Gamma := Y \; I_a = \tilde{Y} \; \tilde{I}_a$. It follows that the original network $\mathcal{N}$ and the translation $\tilde{\mathcal{N}}$ must share the same generators $\left\{ E_1, \ldots, E_f \right\}$. When constructing translations $\tilde{\mathcal{N}}$ for the purpose of applying Theorem \ref{theorem01}, we therefore have the following key intuition:
\begin{center}
\parbox[c]{4.5in}{The process of network translation must turn the stoichiometric generators of ker$(Y \; I_a) \cap \mathbb{R}_{\geq 0}^r$ into cyclic generators of ker$(\tilde{Y} \; \tilde{I}_a) \cap \mathbb{R}_{\geq 0}^r$.}
\end{center}
We propose the following technique for constructing translated chemical reaction networks $\tilde{\mathcal{N}}$ which can be used to characterize the steady states of mass action systems $\mathcal{M}$ through Theorem \ref{theorem01}.\\

\underline{Translation Algorithm}:\\
\begin{enumerate}
\item
Identify the stoichiometric generators of ker$(Y \; I_a) \cap \mathbb{R}_{\geq 0}^r$.
\item
For each stoichiometric generator $E_i = (E_{i1}, E_{i2}, \ldots, E_{ir})$ identified in part $1.$ do the following:
\begin{enumerate}
\item
Assign the reactions on the support of the generator an ordering $\left\{ \mu(1), \right.$ $\ldots,$ $\left. \mu(q) \right\} \subseteq \left\{ 1, \ldots, r \right\}$.
\item
If possible, add and/or subtract species to the left- and right-hand side of each $\mathcal{R}_{\mu(i)}$, $i = 1, \ldots, q$, so that that $\mathcal{C}_{\rho(\mu(i))} \to \mathcal{C}_{\rho'(\mu(i))}$ becomes $\tilde{\mathcal{C}}_{\rho(\mu(i))} \to \tilde{\mathcal{C}}_{\rho'(\mu(i))}$ where the new complexes satisfy $\tilde{\mathcal{C}}_{\rho'(\mu(i-1))} = \tilde{\mathcal{C}}_{\rho(\mu(i))}$ for all $i=1, \ldots, q$ (allowing $\mu(q) = \mu(0))$. This forms a cycle
\[\tilde{\mathcal{C}}_{\rho(\mu(1))} \to \tilde{\mathcal{C}}_{\rho(\mu(2))} \to \cdots \to \tilde{\mathcal{C}}_{\rho(\mu(q))} \to \tilde{\mathcal{C}}_{\rho(\mu(1))}.\]
\end{enumerate}
\item
Translate the following reactions in unison (i.e. add/subtract the same factors in step $2(b)$):
\begin{enumerate}
\item
Any reactions with the same source complex (i.e. any reactions $\mathcal{R}_i$, $\mathcal{R}_j$ for which $\rho(i) = \rho(j)$).
\item
Any reactions already on the support of a cyclic generator in $\mathcal{N}$.
\end{enumerate}
\item
For each reaction $\mathcal{R}_i$, assign the original source complex $\mathcal{C}_{\rho(i)}$ as the kinetic complexes of the new complexes $\tilde{\mathcal{C}}_{\rho(i)}$. In the case of multiple source complexes being assigned to a new complex, any original source complex may be chosen.
\end{enumerate}
If successful, this algorithm produces a strongly translated chemical reaction network by Definition \ref{translation} and Definition \ref{supp} with $h_1$ the identity mapping.

The algorithm is deficient in several ways. Most glaringly, there is no guarantee it will work. The stoichiometric and cyclic generators may overlap in such a way that a reconstruction of the form demanded by step $2.$ is not possible. Worse still, even if the algorithm can work it may not work for all choices of reaction orderings in step $2(a)$. Certain orderings may allow the multiple stoichiometric generators to be fitted together while certain others may not. It is a significant combinatorial problem to ask which of the $(q-1)!$ orderings are worth considering and which are not.

For the purposes of this paper, however, we will ignore these subtleties and consider how the translation algorithm can be used intuitively to construct translations through a series of biochemical examples. Consideration of the full potential and limitations of the translation algorithm provided here will be left as the subject for future work.

\subsection{Example I: Futile Cycle}
\label{futilecyclesection}

Consider the enzymatic network $\mathcal{N}$ given by
\begin{equation}
\label{system1}
\begin{split} & S + E \; \mathop{\stackrel{k_1^+}{\rightleftarrows}}_{k_1^-} \; SE \; \stackrel{k_2}{\rightarrow} \; P + E \\ & P + F \; \mathop{\stackrel{k_3^+}{\rightleftarrows}}_{k_3^-} \; PF \; \stackrel{k_4}{\rightarrow} S + F.\end{split}
\end{equation}
\noindent This network describes an enzymatic mechanism where one enzyme $E$ catalyzes the transition of a substrate $S$ into a product $P$, and a separate enzyme $F$ catalyzes the reverse transition. Due to the appearance that the two enzymes are competing to undo the work of the other, this network is often called the \emph{futile cycle} \cite{A-S2,W-S,M-D-S-C}. 

The steady states of this network under mass action (and more general) kinetics has been well-studied in the mathematical literature. The most thoroughly discussion is given in \cite{A-S2}, where the authors show through a monotonicity argument that, for every choice of rate constants, every stoichiometric compatibility class $\mathsf{C}_{\mathbf{x}_0}$ of (\ref{system1}) contains precisely one positive steady state and that this steady state is globally asymptotically stable relative to $\mathsf{C}_{\mathbf{x}_0}$. It has also be shown by the deficiency one algorithm \cite{Fe4}, the main argument of \cite{W-S}, and Theorem 5.5 of \cite{M-D-S-C} that the network may not permit multistationarity.

A notable absence in the list of applicable theories is the Deficiency Zero Theorem (Theorem \ref{deficiencyzerotheorem}). The network (\ref{system1}) seems like it should be a prime example of complex balancing, since there are very clear flux modes (i.e. sequences of reactions) which are balanced at steady state. Nevertheless, the network is neither weakly reversible nor deficiency zero, and therefore these balanced steady states may not be related to complex balanced steady states by Theorem \ref{deficiencyzerotheorem}. We will now show that deficiency theory \emph{does} apply but not to the original network $\mathcal{N}$; rather, it applies to a specially constructed \emph{translation} $\tilde{\mathcal{N}}$ of $\mathcal{N}$.

For indexing purposes, we will let the species set $\mathcal{S}$ be given by $\mathcal{A}_1 = S$, $\mathcal{A}_2 = E$, $\mathcal{A}_3 = SE$, $\mathcal{A}_4 = P$, $\mathcal{A}_5 = F$, and $\mathcal{A}_6 = PF$, and the complex set $\mathcal{C}$ be indexed by $\mathcal{C}_1 = \mathcal{A}_1 + \mathcal{A}_2$, $\mathcal{C}_2 = \mathcal{A}_3$, $\mathcal{C}_3 = \mathcal{A}_2 + \mathcal{A}_4$, $\mathcal{C}_4 = \mathcal{A}_4 + \mathcal{A}_5$, $\mathcal{C}_5 = \mathcal{A}_6$, and $\mathcal{C}_6 = \mathcal{A}_1 + \mathcal{A}_5$. We will let the reactions be ordered according to the ordering of the rate constants $\left\{ k_1^+,k_1^-,k_2,k_3^+,k_3^-,k_4 \right\}$. We notice that the reactant complex set is $\mathcal{CR} = \left\{ 1, 2, 4, 5 \right\} \subset \mathcal{C}$ and the reaction profile is $(\sigma(1),\sigma(2),\sigma(3),\sigma(4),\sigma(5),\sigma(6))=(1,2,2,4,5,5)$.

We now seek to apply the translation algorithm to $\mathcal{N}$. It can be easily computed that there are three generators of the current cone ker$(\Gamma) \cap \mathbb{R}_{\geq 0}^r$: the two cyclic generators $E_1 = [1 \; 1 \; 0 \; 0 \; 0 \; 0]^T$ and $E_2 = [0 \; 0 \; 0 \; 1 \; 1 \; 0 ]^T$, and the stoichiometric generator $E_3 = [1 \; 0 \; 1 \; 1 \; 0 \; 1]^T$.

We assign the ordering $\left\{1, 3, 4, 6 \right\}$ to the reactions on the support of $E_3$ by part $2(a)$ of the algorithm. By part $2(b)$, we must translate the reactions so that $\tilde{\mathcal{C}}_{\rho'(3)} = \tilde{\mathcal{C}}_{\rho(4)}$ and $\tilde{\mathcal{C}}_{\rho'(6)} = \tilde{\mathcal{C}}_{\rho(1)}$. One admissible choice is
\begin{equation}
\label{futilecycletranslation}
\begin{split} & S + E \; \mathop{\stackrel{k_1^+}{\rightleftarrows}}_{k_1^-} \; SE \; \stackrel{k_2}{\rightarrow} \; P + E \; \; \; \; \; \; \; \; \; \; \; (+F)\\ & P + F \; \mathop{\stackrel{k_3^+}{\rightleftarrows}}_{k_3^-} \; PF \; \stackrel{k_4}{\rightarrow} S + F \; \; \; \; \; \; \; \; \; \; \; \; (+E)\end{split}
\end{equation}
where the indicated additions apply to the entire linkage classes. This choice satisfies part $3.$ of the algorithm. This yields the translation $\tilde{\mathcal{N}} = (\mathcal{S},\tilde{\mathcal{C}},\mathcal{CR},\tilde{\mathcal{R}})$ given by
\[\begin{array}{c} \displaystyle{S + E \; \; \; \; \cdots \; \; \; \; S + E + F \; \mathop{\stackrel{\tilde{k}_1^+}{\rightleftarrows}}_{\tilde{k}_1^-} \; SE + F \; \; \; \; \; \cdots \; \; \; \; SE \; \; \; \; \;} \\ {}^{\tilde{k}_4} \uparrow \; \; \; \; \; \; \; \; \; \; \; \; \; \; \; \; \; \; \; \; \; \; \; \; \; \downarrow_{\tilde{k}_2} \\ \displaystyle{\; \; \; \; \; PF \; \; \; \; \cdots \; \; \; \; PF + E \; \mathop{\stackrel{\tilde{k}_3^-}{\rightleftarrows}}_{\tilde{k}_3^+} \; P + E + F \; \; \; \; \cdots \; \; \; \; P + F} \end{array}\]
where the kinetic complexes associated to the complexes in $\tilde{\mathcal{C}}$ are given by the source complexes of the pre-image of the translation. Notice that the generator $E_3$ corresponds to the support of a cycle in $\tilde{\mathcal{N}}$.

Since each source complex in $\mathcal{N}$ is mapped to a unique source complex in $\tilde{\mathcal{N}}$, the translation is proper and therefore, by Definition \ref{properkinetic}, we may define the rate constants of the translated mass action system $\tilde{\mathcal{M}} = (\mathcal{S},\tilde{\mathcal{C}},\mathcal{CR},\tilde{\mathcal{R}},\tilde{k})$ directly with those of $\mathcal{M} = (\mathcal{S},\mathcal{C},\mathcal{R},k)$. That is to say, we can take $\tilde{k}_i^{+ / -} = k_i^{+ / -}$ for $i=1, 2, 3, 4$. Since $\tilde{\mathcal{N}}$ satisfies $\tilde{\delta}=\tilde{\delta}_K = 0$, by claim $1.$ of Theorem \ref{theorem01} we have that $\mathcal{M}$ has toric steady states for all values of the rate constants.

Furthermore, we can characterize these toric steady states by noting that the translated Kirchhoff matrix $\tilde{A}_k$ is
\begin{equation}
\label{33}
\tilde{A}_k = \left[ \begin{array}{cccc} -k_1^+ & k_1^- & 0 & k_4 \\ k_1^+ & - k_1^- - k_2 & 0 & 0 \\ 0 & k_2 & -k_3^+ & k_3^- \\ 0 & 0 & k_3^+ & -k_3^- - k_4 \end{array} \right].
\end{equation}
We can easily compute ker$(\tilde{A}_k)$ according to Theorem \ref{Akernel} to get
\[\begin{split} \tilde{K}_1 & = (k_1^- + k_2)k_3^+k_4 \\ \tilde{K}_2 & = k_1^+k_3^+k_4 \\ \tilde{K}_3 & = k_1^+k_2(k_3^-+k_4) \\ \tilde{K}_4 & = k_1^+k_2k_3^+. \end{split}\]
It follows by claim $2.$ of Theorem \ref{theorem01} that the steady state ideal of $\mathcal{M}$ is generated by the binomials
\[\tilde{K}_2 x_1 x_2 - \tilde{K}_1 x_3, \tilde{K}_3 x_1 x_2 - \tilde{K}_1 x_4 x_5, \mbox{ and } \tilde{K}_4 x_1 x_2 - \tilde{K}_1 x_6.\]
By claim $3.$ of Theorem \ref{theorem01}, this set can be parametrized by rearranging
\[E = \left\{ \mathbf{x} \in \mathbb{R}^6_{> 0} \; | \; \ln(\mathbf{x})-\ln(\mathbf{x}^*) \in \tilde{S}^{\perp} \right\}\]
where
\[\tilde{S} = \mbox{span}\left\{ y_2 - y_1, y_4 - y_1, y_5 - y_1 \right\}\]
for $\mathcal{C}_1, \mathcal{C}_2, \mathcal{C}_4, \mathcal{C}_5 \in \mathcal{CR}$.

It can be easily checked that $\sigma(S) \cap \sigma(\tilde{S}^{\perp}) = \left\{ \mathbf{0} \right\}$ but that $\sigma(S)$ and $\sigma(\tilde{S})$ are not sign-compatible (since no vector with the sign pattern $(0, +, -, +, 0, 0)$ corresponding to the reaction vector of $SE \to P + E$ can be produced by a linear combination of vectors in $\tilde{S}$). Consequently, by claims $4.$ and $5.$ of Theorem \ref{theorem01} we have that $\mathcal{M}$ may not permit multistationarity but the theory falls silent on the whether each stoichiometric compatibility class permits a steady state. For this technicality, we must defer to the results of \cite{A-S2}.

The key observation is that this result allows an explicit characterization of the steady set (\ref{equilibrium}) of $\mathcal{M}$ based on knowledge of the topological network structure of the translation $\tilde{\mathcal{N}}$. This clarifies the connection between established deficiency-based approaches and the newer algebraic work contained in \cite{M-D-S-C}. The trick is to not apply deficiency theory to the original network $\mathcal{N}$; rather, we apply it to a translation $\tilde{\mathcal{N}}$.

\subsection{Example II: Multiple futile cycle}

Now consider the multiple futile cycle $\mathcal{N}$ given by
\footnotesize
\begin{equation}
\label{system10}
\begin{array}{c} \displaystyle{\; \; \; \; \; S_0 + E \; \mathop{\stackrel{k_{on_0}}{\rightleftarrows}}_{k_{off_0}} ES_0 \stackrel{k_{cat_0}}{\longrightarrow} \; S_1 + E \hspace{0.3in} S_1 + F \; \mathop{\stackrel{l_{on_0}}{\rightleftarrows}}_{l_{off_0}} FS_1 \stackrel{l_{cat_0}}{\longrightarrow} \; S_0 + F} \\
\displaystyle{\vdots \hspace{2in} \vdots}\\
\displaystyle{S_{n-1} + E \; \mathop{\stackrel{k_{on_{n-1}}}{\rightleftarrows}}_{k_{off_{n-1}}} ES_{n-1} \stackrel{k_{cat_{n-1}}}{\longrightarrow} \; S_n + E \hspace{0.3in} S_n + F \; \mathop{\stackrel{l_{on_{n-1}}}{\rightleftarrows}}_{l_{off_{n-1}}} FS_n \stackrel{l_{cat_{n-1}}}{\longrightarrow} \; S_{n-1} + F} \end{array}
\end{equation}
\small
\noindent This network is a generalization of the futile cycle analyzed in Section \ref{futilecyclesection}. In this network, one enzyme $E$ facilitates a forward cascade of transitions from substrate $S_0$ to substrate $S_n$ while another enzyme $F$ facilitates the reverse transitions. This network has been frequently used in the literature to model multisite sequentially distributed phosphorylation networks of unspecified length \cite{Gun2,M-G,H-F-C}.

Despite the structural similarities with (\ref{system1}), there are notable dynamical differences in the corresponding mass action systems $\mathcal{M}$. It was first shown in \cite{M-H-K} that, even for the simple case $n=2$, the system (\ref{system10}) admits rate constant vectors $k \in \mathbb{R}_{>0}^r$ for which $\mathcal{M}$ exhibits multistationarity. A subsequent focused study of the case $n=2$ in \cite{C-F-R} provided a detailed characterization of the rate parameters which permit multistationarity.

The network (\ref{system10}) has also been studied for arbitrary values of $n \geq 1$ in \cite{Gun2,Gun,W-S,M-D-S-C}. It is known that, for all $n \geq 2$, the associated mass action systems $\mathcal{M}$ admit rate constant vectors $k \in \mathbb{R}_{>0}^r$ for which multistationarity is permitted and that an upper bound on the number of steady states in each compatibility class is $2n-1$ \cite{W-S}. It is furthermore conjectured that this upper bound can be tightened to $n+1$ for odd $n$, and $n$ for even $n$. In \cite{M-D-S-C} the authors prove that, for all rate constant vectors $k \in \mathbb{R}_{>0}^r$, the mass action system $\mathcal{M}$ associated with this network has toric steady states. The authors then explicitly calculate the steady state ideal in terms of those rate constants.

We now show that the steady state set derived in \cite{M-D-S-C} is identical to the steady state set of a particular translation $\tilde{\mathcal{N}}$ of $\mathcal{N}$. To apply step $1.$ of the translation algorithm in Section \ref{techniquesection}, we identify the stoichiometric generators of ker$(Y \; I_a) \cap \mathbb{R}_{\geq 0}^r$. We first divide the reaction network into $n$ subcomponents $\mathcal{N}_i$ of the form
\[S_{i-1} + E \; \mathop{\stackrel{k_{on_{i-1}}}{\rightleftarrows}}_{k_{off_{i-1}}} ES_{i-1} \stackrel{k_{cat_{i-1}}}{\longrightarrow} \; S_i + E\]
\[S_i + F \; \mathop{\stackrel{l_{on_{i-1}}}{\rightleftarrows}}_{l_{off_{i-1}}} FS_i \stackrel{l_{cat_{i-1}}}{\longrightarrow} \; S_{i-1} + F\]
for $i=1, \ldots, n$. This subnetwork is identical to the futile cycle network (\ref{system1}) and, consequently, on the support of the reactions in $\mathcal{N}_i$ we have the single stoichiometric generator $E_i = (\cdots \; | \; 1, 0, 1, 1, 0, 1 \; | \; \cdots)$.

We perform step $2.$ of the translation algorithm in the same manner as in Section \ref{futilecyclesection}, with slightly different terms. We translate each subnetwork $\mathcal{N}_i$ in the following way:
\[\begin{split} S_{i-1} + E \; \mathop{\stackrel{k_{on_{i-1}}}{\rightleftarrows}}_{k_{off_{i-1}}} ES_{i-1} \stackrel{k_{cat_{i-1}}}{\longrightarrow} \; S_i + E & \; \; \; \; \; \; \; \; \; \; (+iE + F) \\ S_i + F \; \mathop{\stackrel{l_{on_{i-1}}}{\rightleftarrows}}_{l_{off_{i-1}}} FS_i \stackrel{l_{cat_{i-1}}}{\longrightarrow} \; S_{i-1} + F & \; \; \; \; \; \; \; \; \; \; (+ (i+1)E) \end{split} \]
for all $i=1, \ldots, n$. This choice trivially satisfies condition $3.$ of the translation algorithm. The translated subnetworks $\tilde{\mathcal{N}}_i$ are given by
\begin{equation}
\label{system3}
\begin{array}{c} \displaystyle{S_{i-1}+(i+1)E+F \; \mathop{\stackrel{k_{on_{i-1}}}{\rightleftarrows}}_{k_{off_{i-1}}} \; ES_{i-1}+iE+F } \\ {}^{l_{cat_{i-1}}} \uparrow \; \; \; \; \; \; \; \; \; \; \; \; \; \; \; \; \; \; \; \; \; \; \; \; \; \; \; \; \; \; \; \; \; \; \; \; \; \; \; \; \; \downarrow_{k_{cat_{i-1}}} \\ \displaystyle{FS_i + (i+1)E \; \mathop{\stackrel{l_{on_{i-1}}}{\leftrightarrows}}_{l_{off_{i-1}}} \; S_i + (i+1)E + F} \end{array}
\end{equation}
where, for each $\tilde{\mathcal{N}}_i$, we associate the kinetic complexes $S_{i-1} + E$, $ES_{i-1}$, $S_i + F$, and $FS_i$, respectively, to the translated complexes in (\ref{system3}), starting in the top left and rotating clockwise. This choice satisfies step $4.$ of the translation algorithm.

We notice importantly that each translated subnetwork $\mathcal{N}_i$, $i=1,\ldots, n$, has a stoichiometrically distinct set of translated complexes and consequently that the translation is proper. (Notice that this would not have been satisfied if we had chosen the simpler translations $(+E)$ and $(+F)$ as in (\ref{futilecycletranslation}), although the choice of the additional factor of $iE$ to each subnetwork was arbitrary.) Consequently, for every mass action system $\mathcal{M} = (\mathcal{S},\mathcal{C},\mathcal{R},k)$ corresponding to $\mathcal{N}$ we may define the translated mass action system $\tilde{\mathcal{M}} = (\mathcal{S},\tilde{\mathcal{C}},\mathcal{CR},\tilde{\mathcal{R}},\tilde{k})$ according to Definition \ref{properkinetic}, i.e. taking $\tilde{k}_{h_2(i)} = k_i$.

In order to apply Theorem \ref{theorem01}, we need to compute $\tilde{\delta}$ and $\tilde{\delta}_K$ for the translation $\tilde{\mathcal{N}} = \bigcup_{i=1}^n \tilde{\mathcal{N}}_i$. We notice quickly that the translated network $\tilde{\mathcal{N}}$ has $n$ linkage classes and $4n$ distinct complexes so we only need to compute the stoichiometric space; however, this is the same as the original network. Since there are $3(n+1)$ species and $3$ conservation laws, we have that dim$(S) = 3n$. It follows that $\tilde{\delta} = 4n - 3n - n = 0$. A similar argument shows that $\tilde{\delta}=0$. Since the translation is strong, it follows by claim $1.$ of Theorem \ref{theorem01} that $\mathcal{M}$ has toric steady states for all rate constant vectors $k \in \mathbb{R}_{>0}^r$.

To characterize the steady state set of $\mathcal{M}$, we decompose the translated kinetic matrix $\tilde{A}_k$ of $\mathcal{M}$ into the block diagonal form
\[\tilde{A}_k = \left[ \begin{array}{cccc} (\tilde{A}_k)_1 & 0 & \cdots & 0 \\ 0 & (\tilde{A}_k)_2 & \cdots & 0 \\ \vdots & \vdots & \ddots & \vdots \\ 0 & 0 & \cdots & (\tilde{A}_k)_n \end{array} \right]\]
where each block $(\tilde{A}_k)_i$ has the form
\[(\tilde{A}_k)_i = \left[ \begin{array}{cccc} -k_{on_{i-1}} & k_{off_{i-1}} & 0 & l_{cat_{i-1}} \\ k_{on_{i-1}} & -k_{off_{i-1}}-k_{cat_{i-1}} & 0 & 0 \\ 0 & k_{cat_{i-1}} & -l_{on_{i-1}} & l_{off_{i-1}} \\ 0 & 0 & l_{on_{i-1}} & -l_{off_{i-1}}-l_{cat_{i-1}} \end{array} \right].\]
This is structurally identical to (\ref{33}) and, consequently, the $i^{th}$ support vector of ker$(\tilde{A}_k)$,\\ $\tilde{\mathbf{K}}_i = (\cdots \; | \; (\tilde{K}_i)_1,(\tilde{K}_i)_2,(\tilde{K}_i)_3,(\tilde{K}_i)_{4} \; | \; \cdots)$, corresponding to the support $\Lambda_i$ of $\tilde{\mathcal{N}}_i$, has entries
\[\begin{split} (\tilde{K}_i)_1 & = (k_{off_{i-1}} + k_{cat_{i-1}})l_{on_{i-1}}l_{cat_{i-1}} \\ (\tilde{K}_i)_2 & = k_{on_{i-1}}l_{on_{i-1}}l_{cat_{i-1}} \\ (\tilde{K}_i)_3 & = k_{on_{i-1}}k_{cat_{i-1}}(l_{off_{i-1}}+l_{cat_{i-1}}) \\ (\tilde{K}_i)_4 & = k_{on_{i-1}}k_{cat_{i-1}}l_{on_{i-1}}\end{split}\]
corresponding to the tree constants (\ref{treeconstant}) of $\tilde{\mathcal{N}}_i$. It follows by claims $2.$ of Theorem \ref{theorem01} that the steady state set of $\mathcal{M}$ is generated by the binomials
\begin{equation}
\label{43}
\begin{split} & (\tilde{K}_i)_2 x_{S_{i-1}} x_{E} - (\tilde{K}_i)_1 x_{ES_{i-1}}, (\tilde{K}_i)_2 x_{S_i}x_{F} - (\tilde{K}_i)_3 x_{ES_{i-1}}, \\
& \hspace{1in} (\tilde{K}_i)_2 x_{FS_i} - (\tilde{K}_i)_4 x_{ES_{i-1}}\end{split}
\end{equation}
for $i=1, \ldots, n$. With a little work, this can be shown to give the parametrization contained derived in \cite{M-D-S-C} and the calculated by hand in \cite{W-S}. It can also be directly manipulated to given the \emph{steady state invariants} (3) and (5) in \cite{Gun}.

We note that the approach taken here provides significant new insight into the mechanism of the multiple futile cycle. We now know that ker$(\Sigma)$ for $\mathcal{M}$ is partitioned as in \cite{M-D-S-C} because these partitions correspond to the linkage classes of a proper translation $\tilde{\mathcal{N}}$ of $\mathcal{N}$. The translation $\tilde{\mathcal{N}}$ allows an easy computation of the coefficients in (\ref{43}) based on tree constants (\ref{treeconstant}). This is preferable to the lengthy computations made in \cite{M-D-S-C} and \cite{W-S}. We defer consideration of claims $4.$ and $5.$ of Theorem \ref{theorem01} to future work.



\subsection{Example III: Feinberg-Shinar example}

Consider the phosphorylation network $\mathcal{N}$ given by
\begin{equation}
\label{system3}
\begin{split}
& \displaystyle{XD \mathop{\stackrel{k_1}{\rightleftarrows}}_{k_2} X \mathop{\stackrel{k_3}{\rightleftarrows}}_{k_4} XT \stackrel{k_5}{\rightarrow} X_p} \\
& \displaystyle{X_p + Y \mathop{\stackrel{k_6}{\rightleftarrows}}_{k_7} X_pY \stackrel{k_8}{\rightarrow} X + Y_p} \\
& \displaystyle{XT + Y_p \mathop{\stackrel{k_9}{\rightleftarrows}}_{k_{10}} XTY_p \stackrel{k_{11}}{\rightarrow} XT + Y} \\
& \displaystyle{XD + Y_p \mathop{\stackrel{k_{12}}{\rightleftarrows}}_{k_{13}} XDY_p \stackrel{k_{14}}{\rightarrow} XD +Y.}
\end{split}
\end{equation}
This network was first considered in Example (S60) of the Supporting Online Material of \cite{Sh-F}. The mass action systems $\mathcal{M}$ associated with $\mathcal{N}$ were shown in that paper to exhibit structural robustness at steady state with respect to concentrations of $[X_p]$. That is to say, the steady state sets was shown to be independent of $[X_p]$. The network was reproduced in \cite{M-D-S-C} where the authors showed that the systems $\mathcal{M}$ have toric steady states for all rate constant values.

We now show that the steady states of $\mathcal{M}$ can be characterized by appealing to network translation and Theorem \ref{theorem01}. We start by relabeling the species as in \cite{M-D-S-C}:
\[\mathcal{A}_1 = XD, \; \mathcal{A}_2 = X, \; \mathcal{A}_3 = XT, \; \mathcal{A}_4 = X_p, \; \mathcal{A}_5 = Y,\]
\[\mathcal{A}_6 = X_pY, \; \mathcal{A}_7 = Y_p, \; \mathcal{A}_8 = XTY_p, \; \mathcal{A}_9 = XDY_p\]
and assigning the complexes the indices
\[\mathcal{C}_1 = \mathcal{A}_1, \; \mathcal{C}_2 = \mathcal{A}_2, \; \mathcal{C}_3 = \mathcal{A}_3, \; \mathcal{C}_4 = \mathcal{A}_4, \; \mathcal{C}_5 = \mathcal{A}_4 + \mathcal{A}_5,\]
\[\mathcal{C}_6 = \mathcal{A}_6, \; \mathcal{C}_7 = \mathcal{A}_2 + \mathcal{A}_7, \; \mathcal{C}_8 = \mathcal{A}_3 + \mathcal{A}_7, \; \mathcal{C}_9 = \mathcal{A}_8,\]
\[\mathcal{C}_{10} = \mathcal{A}_3 + \mathcal{A}_5, \; \mathcal{C}_{11} = \mathcal{A}_1 + \mathcal{A}_7, \; \mathcal{C}_{12} = \mathcal{A}_9, \; \mathcal{C}_{13} = \mathcal{A}_1 + \mathcal{A}_5.\]

In order to determine a translation $\tilde{\mathcal{N}}$, we apply step $1.$ of the translation algorithm. There are two stoichiometric generators of ker$(Y \; I_a) \cap \mathbb{R}_{\geq 0}^r$:
\begin{equation}
\label{134}
\begin{split} E_1 & = (0,0,1,0,1,1,0,1,1,0,1,0,0,0) \\ E_2 & = (0,0,1,0,1,1,0,1,0,0,0,1,0,1).\end{split}
\end{equation}

\noindent We assign the orderings $\left\{3, 5, 6, 8, 9, 11 \right\}$ to $E_1$, $\left\{ 3, 5, 6, 8, 12, 14 \right\}$ to $E_2$, and translate each linkage class in the following way:
\[\begin{split}
& \displaystyle{\mathcal{A}_1 \mathop{\stackrel{1}{\rightleftarrows}}_{2} \mathcal{A}_2 \mathop{\stackrel{3}{\rightleftarrows}}_{4} \mathcal{A}_3 \stackrel{5}{\rightarrow} \mathcal{A}_4} \; \; \; \; \; \; \; \; \; \; \; \; \; \; \; \; \; \; \; \; (+ \mathcal{A}_1 + \mathcal{A}_3 + \mathcal{A}_5) \\
& \displaystyle{\mathcal{A}_4 + \mathcal{A}_5 \mathop{\stackrel{6}{\rightleftarrows}}_{7} \mathcal{A}_6 \stackrel{8}{\rightarrow} \mathcal{A}_2 + \mathcal{A}_7} \; \; \; \; \; \; \; \; \; \; \; \; (+ \mathcal{A}_1 + \mathcal{A}_3) \\
& \displaystyle{\mathcal{A}_3 + \mathcal{A}_7 \mathop{\stackrel{9}{\rightleftarrows}}_{10} \mathcal{A}_8 \stackrel{11}{\rightarrow} \mathcal{A}_3 + \mathcal{A}_5} \; \; \; \; \; \; \; \; \; \; \; \; (+ \mathcal{A}_1 + \mathcal{A}_2)\\
& \displaystyle{\mathcal{A}_1 + \mathcal{A}_7 \mathop{\stackrel{12}{\rightleftarrows}}_{13} \mathcal{A}_9 \stackrel{14}{\rightarrow} \mathcal{A}_1 + \mathcal{A}_5 \; \; \; \; \; \; \; \; \; \; \; \; (+ \mathcal{A}_2 + \mathcal{A}_3)}
\end{split}\]
This satisfies the requirements of step $2.$ and $3.$ of the translation algorithm and gives the follow translation $\tilde{\mathcal{N}}$, where we have labelled the reactions as they correspond to (\ref{system3}):
\begin{equation}
\label{system4}
\begin{split} & 2\mathcal{A}_1 + \mathcal{A}_3 + \mathcal{A}_5 \mathop{\stackrel{1}{\rightleftarrows}}_{2} \mathcal{A}_1 + \mathcal{A}_2 + \mathcal{A}_3 + \mathcal{A}_5 \mathop{\stackrel{3}{\rightleftarrows}}_{4} \mathcal{A}_1 + 2\mathcal{A}_3 + \mathcal{A}_5 \\ & \; \; \; \; \; \; \; \; \; \; \; \; \; \; \; \; \; \; \; \; \; \; \; \; \nearrow \hspace{-0.1cm} {}_{14} \; \; \; \; \; \; \; \; \; \; \; \; \; \uparrow_{11} \; \; \; \; \; \; \; \; \; \; \; \; \; \; \; \; \; \; \; \; \; \; \; \; \; \; \; \downarrow_{5} \\ & \; \; \mathcal{A}_2 + \mathcal{A}_3 + \mathcal{A}_9 \; \; \; \; \; \; \; \; \; \; \; \mathcal{A}_1 + \mathcal{A}_2 + \mathcal{A}_8 \; \; \; \; \; \; \; \mathcal{A}_1 + \mathcal{A}_3 + \mathcal{A}_4 + \mathcal{A}_5 \\ & \; \; \; \; \; \; \; \; \; \; \; \; \; \; \; \; \; \; \; {}_{12} \hspace{-0.1cm} \nwarrow \hspace{-0.15cm} \searrow \hspace{-0.1cm} {}^{13} \; \; \; \; \; \; \; \; \; \; \; {}_9 \uparrow \downarrow {}_{10} \; \; \; \; \; \; \; \; \; \; \; \; \; \; \; \; \; \; \; \; \; \; {}_7 \uparrow \downarrow {}_6 \\ & \; \; \; \; \; \; \; \; \; \; \; \; \; \; \; \; \; \; \; \; \; \; \; \; \; \; \; \; \mathcal{A}_1 + \mathcal{A}_2 + \mathcal{A}_3 + \mathcal{A}_7 \stackrel{8}{\leftarrow} \mathcal{A}_1 + \mathcal{A}_3 + \mathcal{A}_6 \end{split}
\end{equation}
We notice that the stoichiometric generators $E_1$ and $E_2$ in (\ref{134}) now correspond to cycles in this reaction graph. We associate the following indices to the translated complex set $\tilde{\mathcal{C}}$:
\[\tilde{\mathcal{C}}_1 = 2\mathcal{A}_1 + \mathcal{A}_3 + \mathcal{A}_5, \; \tilde{\mathcal{C}}_2 = \mathcal{A}_1 + \mathcal{A}_2 + \mathcal{A}_3 + \mathcal{A}_5, \; \tilde{\mathcal{C}}_3 = \mathcal{A}_1 + 2\mathcal{A}_3 + \mathcal{A}_5,\]
\[\tilde{\mathcal{C}}_4 = \mathcal{A}_1 + \mathcal{A}_3 + \mathcal{A}_4 + \mathcal{A}_5, \; \tilde{\mathcal{C}}_5 = \mathcal{A}_1 + \mathcal{A}_3 + \mathcal{A}_6, \; \tilde{\mathcal{C}}_6 = \mathcal{A}_1 + \mathcal{A}_2 + \mathcal{A}_3 + \mathcal{A}_7,\]
\[\tilde{\mathcal{C}}_7 = \mathcal{A}_1 + \mathcal{A}_2 + \mathcal{A}_8, \; \tilde{\mathcal{C}}_8 = \mathcal{A}_2 + \mathcal{A}_3 + \mathcal{A}_9.\]

When attempting to assign kinetic complexes to $\tilde{\mathcal{N}}$ by step $4.$ of the algorithm, we notice that we may not directly assign the pre-translation source complexes $\mathcal{CR}$ in $\mathcal{N}$ to the source complexes $\tilde{\mathcal{CR}}$ in $\tilde{\mathcal{N}}$ because the source complexes of $\mathcal{R}_9$ and $\mathcal{R}_{12}$ ($\mathcal{C}_8$ and $\mathcal{C}_{11}$, respectively) are \emph{both} translated to the complex $\tilde{\mathcal{C}}_{6}$ in $\tilde{\mathcal{N}}$. That is to say, $\tilde{\mathcal{N}}$ is an \emph{improper} translation of $\mathcal{N}$ with improper complex set $\tilde{\mathcal{C}}_I = \left\{ \tilde{\mathcal{C}}_6 \right\}$. By step $4.$ of the translation algorithm, we may choose either $\mathcal{C}_{8}$ or $\mathcal{C}_{11}$ to be the kinetic complex corresponding $\tilde{\mathcal{C}}_{6}$. There is no preference between the two, so we will arbitrarily choose $\mathcal{C}_{8}$. This leaves the improper reaction set $\mathcal{R}_I = \left\{ \mathcal{R}_{12} \right\}$. We choose the rest of the kinetic complexes in the natural way to complete the set $\mathcal{CR}_K = \left\{ \mathcal{C}_1,\mathcal{C}_2,\mathcal{C}_3,\mathcal{C}_5,\mathcal{C}_6,\mathcal{C}_8,\mathcal{C}_9,\mathcal{C}_{12} \right\}$, but we notice that $\mathcal{CR}_K \subset \mathcal{CR}$.

Since the translation $\tilde{\mathcal{N}}$ is improper, the dynamics (\ref{gde}) governing any generalized mass action system $\tilde{\mathcal{M}}$ corresponding to $\tilde{\mathcal{N}}$ will have fewer monomials than any system (\ref{de}) governing a mass action system $\mathcal{M}$ corresponding to $\mathcal{N}$. We may still, however, be able to relate $\tilde{\mathcal{M}}$ and $\mathcal{M}$ \emph{at steady state} by Theorem \ref{theorem01}. To apply this result, we must show that $\tilde{\mathcal{N}}$ is strongly resolvably improper and $\tilde{\delta}=\tilde{\delta}_K=0$.

We show firstly that it is weakly resolvably improper (Definition \ref{improperkineticsubspace}). We have that $\tilde{\mathcal{N}}$ is a strong translation because it is weakly reversible, so we only need to check $\tilde{S}_I \subset \tilde{S}$. In order to do so, we need to consider the space
\[\begin{split} \tilde{S}_I & = \mbox{span} \left\{ \bigcup_{i \in \mathcal{R}_I} \left\{ y_{\rho(i)} - y_{\rho(i)_K} \right\} \right\} = \mbox{span} \left\{ y_{11} - y_{8} \right\} \\ & = \mbox{span} \left\{ (1,0,0,0,0,0,1,0,0) - (0,0,1,0,0,0,1,0,0) \right\} \\ & = \mbox{span} \left\{ (1,0,-1,0,0,0,0,0,0) \right\}.\end{split}\]
Since $\tilde{\mathcal{N}}$ is weakly reversible and only has a single linkage class, the kinetic-order subspace $\tilde{S}$ is given by the span of the stoichiometric differences of the complexes in the set $\mathcal{CR}_K$. We can easily see that $y_1 - y_3 = (1,0,-1,0,0,0,0,0,0)$ so that $\tilde{S}_I \subset \tilde{S}$ and, consequently, $\tilde{\mathcal{N}}$ is a weakly resolvably improper translation of $\mathcal{N}$.

We now want to check whether $\tilde{\mathcal{N}}$ is \emph{strongly} resolvably improper. To do this, we need to determine the weak kinetic adjustment factors $\tilde{K}_{\rho(i),\rho(i)_K}(\mathbf{x})$ for all $i \in \mathcal{R}_I$. Since we have that $\mathcal{R}_I = \left\{ \mathcal{R}_{12} \right\}$, we may use the observation of the previous paragraph to write
\[y_{11} - y_{8} = y_1 - y_3 \; \; \; \Longrightarrow \; \; \; \mathbf{x}^{y_{11}} = \left( \frac{\mathbf{x}^{y_1}}{\mathbf{x}^{y_3}} \right) \mathbf{x}^{y_{8}}.\]
This is the form required of Lemma \ref{lemma00} so that the weak kinetic adjustment factor of $\mathcal{R}_{12}$ is
\begin{equation}
\label{24715}
\tilde{K}_{\rho(12),\rho(12)_K} = \left( \frac{\mathbf{x}^{y_1}}{\mathbf{x}^{y_3}} \right).
\end{equation}

To determine the form of the \emph{strong} kinetic adjustment factors (\ref{strongkineticadjustmentfactor}), we first need to construct the semi-proper reaction graph $\tilde{G}(\tilde{V},\tilde{E})$ of (\ref{system4}) by Definition \ref{hypothetical}. Since $\mathcal{R}_I = \left\{ \mathcal{R}_{12} \right\}$, we may assign $\tilde{k}_i = k_i$ for $i \not= 12$ and leave $\tilde{k}_{12}$ undetermined. By (\ref{24715}) we have that the strong kinetic adjustment factor of $\mathcal{R}_{12}$ is
\begin{equation}
\label{939999}
\tilde{K}_{\rho(12),\rho(12)_K} = \left( \frac{\tilde{K}_{h_2(1)}}{\tilde{K}_{h_2(3)}} \right) = \left( \frac{\tilde{K}_1}{\tilde{K}_3} \right)
\end{equation}
where $\tilde{K}_i$ is the tree constant (\ref{treeconstant}) for the $i^{th}$ complex in $\tilde{G}(\tilde{V},\tilde{E})$. To determine the tree constants (\ref{treeconstant}), we construct the relevant kinetic matrix $\tilde{A}_k$ for $\tilde{\mathcal{N}}$ with the rate constant choices for $\tilde{G}(\tilde{V},\tilde{E})$:
\footnotesize
\begin{equation}\tilde{A}_k = \left[ \begin{array}{cccccccc}
 \hspace{-0.2cm} -k_1 & k_2 & 0 & 0 & 0 & 0 & 0 & 0 \\
k_1 & \hspace{-0.2cm} -k_2-k_3 & k_4 & 0 & 0 & 0 & k_{11} & k_{14} \\
0 & k_3 & \hspace{-0.4cm} -k_4 -k_5 & 0 & 0 & 0 &0 &0 \\
0 & 0 & k_5 & \hspace{-0.3cm} -k_6 & k_7 & 0 & 0 & 0 \\
0 & 0 & 0 & k_6 & \hspace{-0.2cm} -k_7 -k_8 & 0 & 0 & 0 \\
0 & 0 & 0 & 0 & k_8 & \hspace{-0.4cm} -k_9 - k_{13} & k_{10} & \tilde{k}_{12} \\
0 & 0 & 0 & 0 & 0 & k_9 & \hspace{-0.4cm} -k_{10}-k_{11} & 0 \\
0 & 0 & 0 & 0 & 0 & k_{13} & 0 & \hspace{-0.4cm} -\tilde{k}_{12}-k_{14}
\end{array} \right]. \end{equation}
\small
The relevant tree constants are
\[\begin{split} \tilde{K}_1 & = k_2(k_4+k_5)k_6k_8 \left[ k_9k_{11}(\tilde{k}_{12}+k_{14})+ (k_{10}+k_{11})k_{13}k_{14} \right] \\ \tilde{K}_3 & = k_1k_3k_6k_8\left[ k_9k_{11}(\tilde{k}_{12}+k_{14})+ (k_{10}+k_{11})k_{13}k_{14} \right].\end{split}\]
It follows from (\ref{939999}) that we have the strong kinetic adjustment factor simplifies to
\[\tilde{K}_{\rho(12),\rho(12)_K} = \left( \frac{\tilde{K}_1}{\tilde{K}_3} \right) = \frac{k_2(k_4+k_5)}{k_1k_3}.\]
Since this does not depend on any rate constant corresponding to a reaction in $\mathcal{R}_I$ (i.e. the rate constant $\tilde{k}_{12}$), it follows that $\tilde{\mathcal{N}}$ is a strongly resolvable improper translation of $\mathcal{N}$ by Definition \ref{stronglyresolvable}.

We are now prepared to define the improperly translated mass action system $\tilde{\mathcal{M}} = (\mathcal{S},\tilde{\mathcal{C}},\mathcal{CR}_K,\tilde{\mathcal{R}},\tilde{k})$ associated with the translation $\tilde{\mathcal{N}}$. By Definition \ref{improperkinetic}, we assign $\tilde{k}_i = k_i$ for all $i \not= 12$ (as in the semi-proper reaction graph) and define the rate constant corresponding to $\mathcal{R}_{12}$ to be
\begin{equation}
 \label{3100}
\tilde{k}_{12} = \left( \tilde{K}_{\rho(12),\rho(12)_K}\right) k_{12} = \left( \frac{k_2(k_4+k_5)}{k_1k_3} \right) k_{12}.
\end{equation}
This defines the improperly translated mass action system $\tilde{\mathcal{M}}$. Since $\tilde{\delta}=0$, we have by Lemma \ref{lemma331} that the system (\ref{de}) governing $\mathcal{M}$ and the system (\ref{gde}) governing $\tilde{\mathcal{M}}$ have the same steady state set.

We now seek to apply Theorem \ref{theorem01} to characterize the steady state set of $\mathcal{M}$. Since $\tilde{\mathcal{N}}$ is strongly resolvably improper translation of $\mathcal{N}$, and since $\tilde{\delta}=\tilde{\delta}_K=0$ (easily checked), we have by claim $1.$ of Theorem \ref{theorem01} that $\mathcal{M}$ has toric steady states for all rate constant values. In order to apply claim $2.$, we need to compute the tree constants $\tilde{K}_i$, $i=1,\ldots,8$, corresponding to the reaction graph of $\tilde{\mathcal{M}}$. By (\ref{3100}), we have that
\footnotesize
\[\begin{split}
\tilde{K}_1 & = k_2(k_4+k_5)k_6k_8 \left[ k_9k_{11}\left(\left( \frac{k_2(k_4+k_5)}{k_1k_3} \right) k_{12}+k_{14} \right)+ \left(k_{10}+k_{11} \right)k_{13}k_{14} \right]\\
\tilde{K}_2 & = k_1(k_4+k_5)k_6k_8 \left[ k_9k_{11}\left(\left( \frac{k_2(k_4+k_5)}{k_1k_3} \right) k_{12}+k_{14} \right)+ \left(k_{10}+k_{11} \right)k_{13}k_{14} \right]\\
\tilde{K}_3 & = k_1k_3k_6k_8\left[ k_9k_{11} \left( \left( \frac{k_2(k_4+k_5)}{k_1k_3} \right) k_{12}+k_{14} \right)+ \left( k_{10}+k_{11}\right)k_{13}k_{14} \right]\\
\tilde{K}_4 & = k_1k_3k_5(k_7+k_8)\left[ k_9k_{11} \left( \left( \frac{k_2(k_4+k_5)}{k_1k_3} \right) k_{12}+k_{14} \right)+ \left( k_{10}+k_{11}\right)k_{13}k_{14} \right]\\
\tilde{K}_5 & = k_1k_3k_5k_6\left[ k_9k_{11} \left( \left( \frac{k_2(k_4+k_5)}{k_1k_3} \right) k_{12}+k_{14} \right)+ \left( k_{10}+k_{11}\right)k_{13}k_{14} \right]\\
\tilde{K}_6 & = k_1k_3k_5k_6k_8(k_{10}+k_{11})\left( \left( \frac{k_2(k_4+k_5)}{k_1k_3} \right)k_{12}+ k_{14}\right)\\
\tilde{K}_7 & = k_1k_3k_5k_6k_8k_9 \left( \left( \frac{k_2(k_4+k_5)}{k_1k_3} \right)k_{12}+ k_{14}\right)\\
\tilde{K}_8 & = k_1k_3k_5k_6k_8\left( k_{10}+k_{11} \right) k_{13}
\end{split}\]
\small
By claim $2.$ of Theorem \ref{theorem01}, therefore, we have that the steady state set of $\mathcal{M}$ can be generated by the binomials
\[\tilde{K}_{3} x_3x_7 - \tilde{K}_{6} x_3, \tilde{K}_{3} x_9 - \tilde{K}_{8} x_3, \tilde{K}_{3} x_8 - \tilde{K}_{7} x_3, \tilde{K}_{3} x_6 - \tilde{K}_{5} x_3,\]
\[\tilde{K}_{3} x_4x_5 - \tilde{K}_{4} x_3, \tilde{K}_{3} x_2 - \tilde{K}_{2} x_3, \tilde{K}_{3} x_1 - \tilde{K}_{1} x_3.\]
After simplification, this can be seen to be the same as the binomials given by (3.16) in \cite{M-D-S-C}. In other words, we have found the Groebner basis of the steady state ideal with respect to the ordering $x_1>x_2>x_4>x_5>x_6>x_8>x_9>x_3>x_7$. Of particular importance, we have related the monomial coefficients explicitly to a reaction graph. It was not, however, the reaction graph of the chemical reaction network $\mathcal{N}$; rather it was the reaction graph of the improperly translated chemical reaction network $\tilde{\mathcal{N}}$ with the rate constants given by the corresponding improperly translated mass action system $\tilde{\mathcal{M}}$. Notable, we were not permitted to define $\tilde{k}_{12}=k_{12}$; rather, we had to define it by (\ref{3100}). We once again defer consideration of claims $4.$ and $5.$ of Theorem \ref{theorem01} to future work.

\section{Conclusions and Future Work}

In this paper, we introduced the notion of a \emph{translated chemical reaction network} as a method for characterizing the steady states of mass action systems.


The method of network translation relates a chemical reaction network $\mathcal{N} = (\mathcal{S},\mathcal{C},\mathcal{R})$ to a generalized chemical reaction network $\tilde{\mathcal{N}} = (\mathcal{S},\tilde{\mathcal{C}},\mathcal{CR}_K, \tilde{\mathcal{R}})$, called a \emph{translation} of $\mathcal{N}$, which has the same reaction vectors as $\mathcal{N}$ but different complexes and consequently different connectivity properties in the translated reaction graph. We defined two classes of translations, \emph{proper} translations (Definition \ref{supp}) and \emph{strongly resolvably improper} translations (Definition \ref{stronglyresolvable}), which allowed a translated mass action system $\tilde{\mathcal{M}} = (\mathcal{S},\tilde{\mathcal{C}},\mathcal{CR}_K, \tilde{\mathcal{R}},\tilde{k})$ to be defined (Definition \ref{properkinetic} and Definition \ref{improperkinetic}, respectively). We then presented conditions on the network topology of $\tilde{\mathcal{N}}$ which allowed an explicit connection to be made between complex balanced steady states of $\tilde{\mathcal{M}}$ and toric steady states of $\mathcal{M}$ (Theorem \ref{theorem01}). Finally, in Section \ref{examplesection}, we applied the results to a series of examples drawn from the literature.

The study of translated chemical reaction networks specifically, and generalized chemical reaction networks in general, is very new and there are consequently many aspects of the theory which have not be fully investigated. A few of the key points of future work include:
\begin{enumerate}
\item
The translation algorithm presented in Section \ref{techniquesection} depends heavily on intuition which may be lacking for large-scale biochemical networks. A stronger algorithm, and computational implementation, is required for broad-based application.
\item
There is notable room for improvement in the conditions for \emph{weak} and \emph{strong} resolvability of improper translations (Definitions \ref{improperkineticsubspace} and \ref{stronglyresolvable}). In particular, it is undesirable to construct the semi-proper reaction graph and compute all the ratios $\tilde{K}_{h_2(p_i)} / \tilde{K}_{h_2(q_i)}$ in order to determine strong resolvability. The author suspects that there are simpler sufficient conditions for strong resolvability.
\item
Translated chemical reaction networks are generalized chemical reaction networks, and consequently conclusions may only be drawn as far as they are justified by this underlying theory. The author suspects that, as this nascent theory becomes more fully developed, there will be increased application for the process of network translations in characterizing the steady states of mass action systems.
\end{enumerate}

\noindent \textbf{Acknowledgements:} The author is grateful for the numerous constructive conversations with Anne Shiu, Carsten Conradi, Casian Pantea, Stefen M\"{u}ller, and others, over email and at the AIM workshop ``Mathematical problems arising from biochemical reaction networks,'' which pointed him toward the strong connection between toric steady states and complex balancing in generalized mass action systems.

\appendix

\section{Appendix (Deficiency Result)} \label{AppendixB}
\begin{lemma}
\label{deficiencylemma}
The deficiency $\delta = \mbox{\emph{dim}}(\mbox{\emph{ker}}(Y) \cap \mbox{\emph{Im}}(I_a))$ of a chemical reaction network $\mathcal{N}$ is equivalent to $\delta = n - \ell - s$ where $n$ is the number of stoichiometrically distinct complexes, $\ell$ is the number of linkage classes, and $s =$dim$(S)$.
\end{lemma}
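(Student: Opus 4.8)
The plan is to obtain the identity through a pair of rank computations, first relating the incidence matrix $I_a$ to the graph structure and then passing through $Y$. Throughout, I use that $\Gamma := Y \, I_a$ has the reaction vectors $y_{\rho'(i)} - y_{\rho(i)}$ as its columns, so that $\mbox{Im}(\Gamma) = S$ and hence $s = \mbox{dim}(S) = \mbox{rank}(Y \, I_a)$.

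First I would establish that $\mbox{dim}(\mbox{Im}(I_a)) = n - \ell$. This is the classical statement that the rank of the incidence matrix of a directed graph equals the number of vertices minus the number of connected components. I would prove it by computing the left null space of $I_a$: a vector $v \in \mathbb{R}^n$ satisfies $v^T I_a = \mathbf{0}$ if and only if $v_{\rho'(i)} - v_{\rho(i)} = 0$ for every reaction $\mathcal{R}_i$, since column $i$ of $I_a$ carries $-1$ in row $\rho(i)$ and $+1$ in row $\rho'(i)$. Thus $v^T I_a = \mathbf{0}$ exactly when $v$ takes a common value on any two complexes joined by a reaction, i.e. when $v$ is constant on each connected component of the reaction graph $G(V,E)$. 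Because the linkage classes are precisely these connected components, the indicator vectors of the $\ell$ linkage classes form a basis for this null space, so $\mbox{dim}(\mbox{ker}(I_a^T)) = \ell$ and therefore $\mbox{dim}(\mbox{Im}(I_a)) = \mbox{rank}(I_a) = n - \ell$.

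Second, I would apply the rank--nullity theorem to the linear map $Y$ restricted to the subspace $\mbox{Im}(I_a) \subseteq \mathbb{R}^n$, which yields
\[\mbox{dim}(\mbox{Im}(I_a)) = \mbox{dim}(\mbox{ker}(Y) \cap \mbox{Im}(I_a)) + \mbox{dim}(Y(\mbox{Im}(I_a))).\]
The image term simplifies because $Y(\mbox{Im}(I_a)) = \mbox{Im}(Y \, I_a) = \mbox{Im}(\Gamma) = S$, so $\mbox{dim}(Y(\mbox{Im}(I_a))) = s$. Substituting the result of the first step gives $n - \ell = \delta + s$, which rearranges to the desired $\delta = n - \ell - s$.

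The only genuinely nontrivial ingredient is the incidence-matrix rank identity of the first step; the remainder is bookkeeping with rank--nullity together with the identification $\mbox{Im}(\Gamma) = S$. The main subtlety to verify carefully is that the connectivity relation generated by the reactions---where an edge may be traversed in either direction for the purpose of connectedness---coincides with the relation defining linkage classes, so that the left null space of $I_a$ has dimension exactly $\ell$ rather than the (generally larger) number of strong linkage classes. This is where the distinction between \emph{connected} and \emph{path-connected} from Section~2.2 must be invoked, since $v^T I_a = \mathbf{0}$ forces equality of $v$ across an edge regardless of its orientation.
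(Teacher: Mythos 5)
Your proof is correct and rests on the same two ingredients as the paper's: the incidence-matrix rank identity $\mbox{rank}(I_a)=n-\ell$ and the identification $\mbox{Im}(Y\,I_a)=S$. The only organizational difference is that the paper runs the dimension count through kernels in $\mathbb{R}^r$ --- it uses the decomposition $\dim(\ker(\Gamma))=\dim(\ker(I_a))+\dim(\ker(Y)\cap\mbox{Im}(I_a))$ together with $\dim(\ker(\Gamma))=r-s$ and $\dim(\ker(I_a))=r-(n-\ell)$, so the reaction count $r$ appears and cancels --- whereas you apply rank--nullity to $Y$ restricted to $\mbox{Im}(I_a)\subseteq\mathbb{R}^n$ and never touch $r$. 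These are dual formulations of the same argument, though yours is marginally cleaner, and you additionally supply a proof of the incidence-matrix rank fact (via the left null space being spanned by the indicator vectors of the linkage classes) that the paper simply asserts; your closing remark that this requires the \emph{connected} rather than \emph{path-connected} relation is exactly the right point to flag.
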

\begin{proof}
It follows from basic dimensional considerations that
\[\mbox{dim}(\mbox{ker}(\Gamma)) = \mbox{dim}(\mbox{ker} (I_a)) + \mbox{dim}(\mbox{ker}(Y) \cap \mbox{Im}(I_a)).\]
From the rank-nullity theorem, we may relate the dimension of the kernel of a matrix to its rank. Consequently, we have
\[\mbox{dim}(\mbox{ker}(\Gamma)) = r - \mbox{dim}(\mbox{Im}(\Gamma)) = r-s.\]
The rank of $I_a$ corresponds to the number of complexes minus the number of linkage classes, so that dim(Im$(I_a)) = n- \ell$. It follows that
\[\mbox{dim}(\mbox{ker}(I_a)) = r - (n - \ell) = r + \ell - n.\]
It follows that
\[\delta = \mbox{dim}(\mbox{ker}(Y) \cap \mbox{Im}(I_a)) = \mbox{dim}(\mbox{ker}(\Gamma)) - \mbox{dim}(\mbox{ker} (I_a))\]
\[ = (r-s) - (r+\ell-n) = n - \ell - s\]
and we are done.
\end{proof}

\section{Appendix (Kernel of $A_k$)} \label{AppendixC}

In this appendix, we present the a more detailed characterization of ker$(A_k)$ for a mass action system $\mathcal{M} = (\mathcal{S},\mathcal{C},\mathcal{R},k)$.

Consider a weakly reversible chemical reaction network $\mathcal{N} =(\mathcal{S},\mathcal{C},\mathcal{R})$ and let $\Lambda_k$, $k=1, \ldots, \ell$, denote the supports of the network's linkage classes $\mathcal{L}_k$, $k=1, \ldots, \ell$. Define a subgraph $\mathcal{T} \subset \mathcal{R}$ to be a \emph{spanning} $i$-\emph{tree} if $\mathcal{T}$ spans all of the complexes in some linkage class $\mathcal{L}_k$, contains no cycles, and has the unique sink $\mathcal{C}_i \in \mathcal{C}$. Let $\mathcal{T}_i$ denote the set of all spanning $i$-trees for $\mathcal{C}_i \in \mathcal{C}$. We define the following network constants.
\begin{definition}
Consider a weakly reversible chemical reaction network $\mathcal{N} = (\mathcal{S},\mathcal{C},\mathcal{R})$ with reaction weightings $k_j$, $j=1, \ldots, r$. Then the \textbf{tree constant} of $\mathcal{C}_i \in \mathcal{C}$ is given by
\begin{equation}
\label{treeconstant}
K_i = \sum_{\mathcal{T} \in \mathcal{T}_i} \prod_{\mathcal{R}_j \in \mathcal{T}_i} k_j.
\end{equation}
\end{definition}
\begin{remark} To compute the tree constants $K_i$, we restrict ourselves to the linkage class containing the complex $\mathcal{C}_i \in \mathcal{C}$. We then determine all of the spanning trees which contain $\mathcal{C}_i$ as the unique sink, multiply across all the weighted edges in each tree, and then sum over all such trees. The terms $K_i$ can also be computed by computing specific minors of the kinetic matrix $A_k$ restricted to the support of the linkage classes (Proposition 3, \cite{C-D-S-S}). Note that the term ``tree constant'' is our own.
\end{remark}

The following result characterizes ker$(A_k)$ in terms of the tree constants (\ref{treeconstant}). This result appears in various forms within the chemical reaction network literature. A basic form, just concerned with the signs of the individual components, can be found in \cite{F3} (Proposition 4.1) and \cite{G-H} (Theorem 3.1). A more specific result can be obtained by the \emph{Matrix-Tree Theorem} \cite{Stanley}. This form is explicitly connected with the reaction graph of a chemical reaction network in \cite{C-D-S-S} (Corollary 4). A direct argument is also contained in Section 3.4 of \cite{J}. We defer to these references for the proof.

\begin{theorem}
\label{Akernel}
Let $\mathcal{N} = (\mathcal{S},\mathcal{C},\mathcal{R})$ denote a weakly reversible chemical reaction network. Let $\Lambda_k$, $k=1, \ldots, \ell$, denote the supports of the network's linkage classes $\mathcal{L}_k$, $k=1, \ldots, \ell$, and let $K_i$ denote the tree constants (\ref{treeconstant}) corresponding to the complexes $\mathcal{C}_i \in \mathcal{C}$. Then
\[\mbox{ker} (A_k) = \mbox{span} \left\{ \mathbf{K}_1, \mathbf{K}_2, \ldots, \mathbf{K}_{\ell} \right\}\]
where $\mathbf{K}_j = ([K_j]_1,[K_j]_2,\ldots,[K_j]_n)$ has entries
\[ [K_j]_i = \left\{ \begin{array}{ll} K_i, \; \; \; \; \; \; & \mbox{if } i \in \Lambda_j \\ 0 & \mbox{otherwise.} \end{array} \right.\]
\end{theorem}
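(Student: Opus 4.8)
The plan is to reduce the claim to a single linkage class and then to recognize the balance relation $A_k \mathbf{K} = 0$ as the classical Matrix-Tree (Markov-chain tree) identity. First I would exploit the block structure of $A_k$. Since $[A_k]_{jl} \neq 0$ for $j \neq l$ only when $\mathcal{C}_l \to \mathcal{C}_j$ is a reaction, and reactions never join complexes lying in distinct linkage classes, permuting the complexes so that each linkage class is contiguous puts $A_k$ into block-diagonal form with one block $A_k^{(p)}$ supported on $\Lambda_p$, $p = 1, \ldots, \ell$. Consequently $\mbox{ker}(A_k) = \bigoplus_{p=1}^{\ell} \mbox{ker}(A_k^{(p)})$, where each summand consists of vectors supported on $\Lambda_p$. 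It therefore suffices to show that, for each $p$, the block $A_k^{(p)}$ has a one-dimensional kernel spanned by the tree-constant vector $(K_i)_{i \in \Lambda_p}$; the stated form of $\mathbf{K}_p$ in (\ref{544}) follows at once.

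Second, I would pin down the dimension of each block kernel. Because $\mathcal{N}$ is weakly reversible, each linkage class coincides with a strong linkage class and so is strongly connected as a digraph. Each column of $A_k^{(p)}$ sums to zero, since in column $l$ the off-diagonal entries are exactly the outgoing rates from $\mathcal{C}_l$ and the diagonal is their negative sum; hence $\mathbf{1}^T A_k^{(p)} = 0$ and $\mbox{rank}(A_k^{(p)}) \leq |\Lambda_p| - 1$. For the reverse inequality I would use irreducibility: for $c$ large enough $A_k^{(p)} + cI$ is a nonnegative matrix whose associated digraph is strongly connected, hence irreducible, so by Perron--Frobenius its spectral radius is a simple eigenvalue carrying a strictly positive eigenvector. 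Since $\mathbf{1} > 0$ is a left eigenvector of $A_k^{(p)} + cI$ for the eigenvalue $c$, that eigenvalue must be the Perron root and is therefore simple; translating back shows $0$ is a simple eigenvalue of $A_k^{(p)}$, so $\mbox{dim}(\mbox{ker}(A_k^{(p)})) = 1$.

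Third, and this is the technical heart, I would exhibit the kernel vector explicitly. The claim is that $(K_i)_{i \in \Lambda_p}$ satisfies $A_k^{(p)}(K_i) = 0$, i.e. that for each complex $\mathcal{C}_j$ in the linkage class
\[
\left( \sum_{i : \rho(i)=j} k_i \right) K_j = \sum_{l \neq j} [A_k]_{jl} \, K_l,
\]
which is the balance relation of the continuous-time Markov chain with generator $A_k$. I would prove it by a weight-preserving bijection on spanning subgraphs. Each term on the left is a spanning $j$-tree $T \in \mathcal{T}_j$ (all edges oriented toward $\mathcal{C}_j$) together with one reaction out of $\mathcal{C}_j$, while each term on the right is a spanning $l$-tree $T' \in \mathcal{T}_l$ together with one reaction $\mathcal{C}_l \to \mathcal{C}_j$ into $\mathcal{C}_j$. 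In both cases the resulting subgraph has out-degree exactly one at every complex and so contains a unique cycle, which necessarily passes through $\mathcal{C}_j$. Conversely, every such unicyclic subgraph arises exactly once from the left (by deleting the out-edge of $\mathcal{C}_j$) and exactly once from the right (by deleting the in-edge of $\mathcal{C}_j$ lying on the cycle), and both operations preserve the product of edge weights defining (\ref{treeconstant}); summing over all such subgraphs yields equality of the two sides. Strong connectivity also guarantees $\mathcal{T}_i \neq \emptyset$, hence $K_i > 0$ for every $i \in \Lambda_p$, so this vector is strictly positive on $\Lambda_p$ and spans the one-dimensional kernel found in the previous step.

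The main obstacle is the combinatorial cancellation in the third step: setting up the bijection between the unicyclic configurations generated by the two sides, and checking that it is both well defined and weight preserving, is where the real work lies. One can either carry this out directly as above or, equivalently, cite the directed Matrix-Tree Theorem and match its cofactor expression to the definition (\ref{treeconstant}) of $K_i$, as is done in the references cited in the preceding remark.
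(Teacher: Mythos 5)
Your proof is correct. Note that the paper does not actually prove Theorem \ref{Akernel}---it explicitly defers to the cited references (Feinberg's lecture notes, Gatermann--Huber, the Matrix-Tree Theorem, and Corollary 4 of \cite{C-D-S-S})---and your argument is precisely the standard route those references take: block-diagonalize $A_k$ over the linkage classes, obtain $\dim \ker(A_k^{(p)}) = 1$ from the zero column sums together with Perron--Frobenius applied to the irreducible matrix $A_k^{(p)} + cI$, and verify that the tree-constant vector lies in the kernel via the weight-preserving bijection on unicyclic functional subgraphs (the Markov-chain tree identity). You have simply supplied in full the details the paper omits, and they are sound.
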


\begin{remark}
This theorem may be extended to networks which are not weakly reversible by considering the terminal strongly linked components of a chemical reaction network. As all the relevant networks considered in this paper are weakly reversible, however, Theorem \ref{Akernel} will suffice for our purposes here.
\end{remark}





\end{document}